\documentclass[twoside]{amsart}

\usepackage{lmodern}
\usepackage[english]{babel} % per la sillabazione.
\usepackage[latin1]{inputenc} % specifica la codifica con cui è stato scritto il codice sorgente
\usepackage{microtype}
\usepackage[mathcal]{eucal} % per scrivere le lettere quelle belle elaborate
\usepackage{amsfonts} % tutti gli ams-pacchetti servono per simboli e caratteri nelle formule
\usepackage{amsthm}
\usepackage{amssymb}
\usepackage{amsmath}
\usepackage{booktabs}
\usepackage{latexsym}
\usepackage[pdftex]{color, graphicx}
\usepackage[all]{xy} %serve per i grafici
\usepackage[pdftex]{hyperref} % attiva gli hyperlink

\hypersetup{bookmarksopen = true, bookmarksopenlevel = 1, colorlinks=true, linkcolor = blue, pdfstartview = FitR}

\theoremstyle{plain}
\newtheorem{theorem}{\sc Theorem}[section]
\newtheorem{proposition}[theorem]{\sc Proposition}

\newtheorem{lemma}[theorem]{\sc Lemma}
\newtheorem{corollary}[theorem]{\sc Corollary}
\newtheorem{conjecture}[theorem]{\sc Conjecture}

\theoremstyle{definition}
\newtheorem{definition}[theorem]{\sc Definition}

\newtheorem{example}[theorem]{\sc Example}

\theoremstyle{remark}
\newtheorem{remark}[theorem]{\sc Remark}

%\setlength{\textheight}{225mm} \setlength{\topmargin}{0.46cm}
%\setlength{\textwidth}{150mm} \setlength{\oddsidemargin}{0.46cm}
%\setlength{\evensidemargin}{0.46cm}
%\allowdisplaybreaks

%Insiemi numerici

\newcommand{\K}{\Bbbk}

%Comandi generici
\newcommand{\brd}[1]{\mathbb{#1}} %brd sta per BOARD
 %[1]^x
 %scrive una frazione con le parentesi
\newcommand{\fun}[5]{\begin{array}{rccc} #1 \,\colon & #2 & \longrightarrow & #3 \\ %fun sta per FUNCTION
& #4 & \longmapsto & #5 \end{array}}
\newcommand{\varfun}[3]{#1 \colon #2 \to #3}

 %mette il simbolo di aperto (°) sopra [1]

\newcommand{\quasihopf}[1]{{}_{#1}^{}\M{}_{#1}^{#1}}
\newcommand{\quasihopfmod}[1]{{}_{\bullet}^{}{#1}_{\bullet}^{\bullet}}

\newcommand{\bimodcat}[1]{{}_{#1}^{}{\M}_{#1}^{}}
\newcommand{\lmod}[1]{{}_{\bullet}^{}{#1}}

\newcommand{\cl}[1]{\overline{#1}}
\newcommand{\lfun}[5]{{#1} \colon {#2} \longrightarrow {#3} \colon {#4} \longmapsto {#5}}
\newcommand{\sfun}[5]{{#1} \colon {#2} \rightarrow {#3} \colon {#4} \mapsto {#5}}
\newcommand{\coinv}[2]{{#1}^{\mathrm{co}{#2}}}
% \newcommand{\bimod}[1]{{}_{#1}^{}{\M}{}_{#1}^{}}

%Comandi ricorrenti

\newcommand{\id}{\textrm{Id}}
 % objects of a category

\newcommand{\End}{\textrm{End}}

\newcommand{\M}{\mathfrak{M}} % Monoidal category of K-vector spaces
\newcommand{\I}{\brd{I}} % unit

\usepackage{fancyhdr}

\begin{document}

\title{On the Structure Theorem for quasi-Hopf bimodules}

\author{Paolo Saracco}
\address{University of Turin, Department of Mathematics ``Giuseppe Peano'', via Carlo Alberto 10, I-10123 Torino, Italy}
\email{p.saracco@unito.it}
\urladdr{sites.google.com/site/paolosaracco}
\date{\today}
\thanks{Most of this paper is included in the master degree thesis of the author, which was developed under the supervision of A. Ardizzoni and L. El Kaoutit.}

\begin{abstract}
The Structure Theorem for Hopf modules states that if a bialgebra $A$ is a Hopf algebra (i.e. it is endowed with a so-called antipode) then every Hopf module $M$ is of the form ${M}^{\mathrm{co}{A}}\otimes A$, where ${M}^{\mathrm{co}{A}}$ denotes the space of coinvariant elements in $M$. Actually, it has been shown that this result characterizes Hopf algebras: $A$ is a Hopf algebra if and only if every Hopf module $M$ can be decomposed in such a way. The main aim of this paper is to extend this characterization to the framework of quasi-bialgebras by introducing the notion of preantipode and by proving a Structure Theorem for quasi-Hopf bimodules. We will also establish the uniqueness of the preantipode and the closure of the family of quasi-bialgebras with preantipode under gauge transformation. Then, we will prove that every Hopf and quasi-Hopf algebra (i.e. a quasi-bialgebra with quasi-antipode) admits a preantipode and we will show how some previous results, as the Structure Theorem for Hopf modules, the Hausser-Nill theorem and the Bulacu-Caenepeel theorem for quasi-Hopf algebras, can be deduced from our Structure Theorem. Furthermore, we will investigate the relationship between the preantipode and the quasi-antipode and we will study a number of cases in which the two notions are equivalent: ordinary bialgebras endowed with trivial reassociator, commutative quasi-bialgebras, finite-dimensional quasi-bialgebras.
\keywords{Quasi-bialgebras \and structure theorem \and preantipode \and right quasi-Hopf bimodules \and quasi-Hopf algebras \and gauge}
\subjclass{MSC 16W30}
%\subjclass[1991]{Primary 16W30.}
\end{abstract}

\maketitle

\fancyhf{}
\renewcommand{\headrulewidth}{0pt}
\thispagestyle{fancy}
\lfoot{\smallskip\footnotesize The final publication is available at Springer via \href{https://doi.org/10.1007/s10485-015-9408-9}{doi.org/10.1007/s10485-015-9408-9}.}

\tableofcontents

\section{Introduction}

Let $A$ be a bialgebra in the monoidal category of $\K$-vector spaces $(\M,\otimes, \K,a,l,r)$ ($\K$ a field). It is a well-known result in the study of Hopf algebras that $A$ is a Hopf algebra if and only if every Hopf module $M$ can be decomposed as $M\cong \coinv{M}{A}\otimes A$, where $\coinv{M}{A}$ denotes the space of coinvariant elements of $M$ (cf. \cite[Theorem 15.5]{BrWi}). Categorically speaking, this means that there exists an adjunction between the category of vector spaces $\M$ and the category of Hopf modules $\M_A^A$ such that the left adjoint functor is given by
$$\lfun{L}{\M}{\M_A^A}{V}{V\otimes A}$$
and that $L$ is an equivalence of categories if and only if $A$ is Hopf. The implication in this theorem that gives the structure result for Hopf modules is commonly known as the Fundamental or Structure Theorem of Hopf modules (see \cite[Theorem 4.1.1]{Swe} and \cite[Theorem 1.9.4]{Mo}) and it has important consequences, e.g. in the study of integrals over a Hopf algebra (the proof of uniqueness and existence of integrals over a finite dimensional Hopf algebra is based on it; cf. \cite[Corollary 5.1.6]{Swe}).

In \cite{Dri}, Drinfel'd introduced a generalization of bialgebras and Hopf algebras connected to conformal field theory: quasi-bialgebras and quasi-Hopf algebras. Roughly speaking, a quasi-bialgebra $A$ is a bialgebra with a comultiplication that is coassociative just up to conjugation by an invertible element in $A\otimes A\otimes A$, called the Drinfel'd reassociator. A quasi-Hopf algebra is a quasi-bialgebra endowed with an antimultiplicative endomorphism $s$ and with two distinguished elements $\alpha$ and $\beta$ that satisfy certain properties. The triple $(s,\alpha,\beta)$ is called a quasi-antipode for $A$. Actually, Drinfel'd's definition of a quasi-bialgebra ensures that the category of left $A$-modules ${}_A^{}\M$ is still monoidal, and his definition of quasi-Hopf algebras guarantees that the category of finite-dimensional left $A$-modules is rigid.

In 1999, Hausser and Nill (cf. \cite{HN}) extended the Structure Theorem to the framework of quasi-bialgebras: as for the Hopf case, they found that there is a functor 
$$\lfun{G}{{}_A^{}\M}{\quasihopf{A}}{M}{M\otimes A}$$
and they proved that if a quasi-bialgebra admits a quasi-antipode, then there exists a proper analogue of the space of coinvariants such that every quasi-Hopf bimodule can be decomposed in the same way (i.e. such that $G$ is an equivalence). This result enabled them to provide a theory of integrals for quasi-Hopf algebras of finite dimension and to prove that the space of integrals on a finite-dimensional quasi-Hopf algebra (also called cointegrals) has dimension one (see \cite[Theorem 4.3]{HN}). In 2002, Bulacu and Caenepeel gave an alternative definition of the space of coinvariants that has the advantage of giving rise to an alternative definition of cointegrals that still makes sense in the infinite-dimensional case (see \cite[Section 3]{BC}). Moreover, even if it turned out to be isomorphic to the one of Hausser and Nill, it can be used to give a second version of the Structure Theorem.

Unfortunately, there's no evidence that the converse of these two results holds. Actually, there exists an example in the dual context, due to Schauenburg, of a dual quasi-bialgebra for which the Structure Theorem holds, but that is not a dual quasi-Hopf algebra (cfr. \cite{Sch1} and \cite[Example 4.5.1]{Sch3}).

Ardizzoni and Pavarin studied the topic in \cite{AP1} and they came to the conclusion that a proper generalization of the antipode to dual quasi-bialgebras was what they called a preantipode: a $\K$-linear map $\varfun{S}{A}{A}$ satisfying certain properties. The main aim of this paper is to fit what they got to the framework of quasi-bialgebras. Observe that, even if it may seem just dualizing, there are a number of difficulties to overcome. First of all, the dual of a dual quasi-bialgebra is not a quasi-bialgebra in general (unless we are in the finite-dimensional case). Secondly, unlikely the `dual quasi' case we don't have a pretty definition of the space of coinvariants that helps us in defining the adjunction between ${}_A^{}\M$ and $\quasihopf{A}$ by taking inspiration from the ordinary Hopf version. On the contrary, our definition of coinvariants (that follows Hausser-Nill's one) is a consequence of the Structure Theorem itself.

In details, the paper is organized as follows.

In Section \ref{section2} we recall some basic notions concerning monoidal categories, quasi-bialgebras, quasi-Hopf bimodules and we fix our notation. In particular, we construct an adjunction between ${}_A^{}\M$ and $\quasihopf{A}$.

Section \ref{section4} is devoted to the notion of preantipode. In \ref{section4.1} we prove the main result: the Structure Theorem for quasi-Hopf bimodules. It states that for a quasi-bialgebra $A$ the functor $\varfun{-\otimes A}{{}_A^{}\M}{\quasihopf{A}}$ of Hausser and Nill is an equivalence of categories if and only if $A$ admits a preantipode, if and only if there exists a $\K$-linear map $\varfun{\tau_M}{M}{M}$ for every quasi-Hopf bimodule $M$ that satisfies certain properties. In \ref{section4.2} we introduce the space of coinvariant elements for a quasi-Hopf bimodule, namely $N=\tau_M(M)$, and we show how every quasi-Hopf bimodule $M$ over a quasi-bialgebra $A$ with preantipode is of the form $N\otimes A$. Subsection \ref{section4.3} collects the results about uniqueness of the preantipode and closure of the class of quasi-bialgebras with preantipode under gauge transformation.

In Section \ref{section5} we introduce quasi-Hopf algebras in order to show how the classical results are now consequences of the theory we developed. The cornerstone of this section is Theorem \ref{th3.2.22}, which asserts that every quasi-Hopf algebra admits a preantipode. From this result we can recover the Structure Theorem for Hopf modules (Remark \ref{rem3.3.13}) and Hausser-Nill version of the Structure Theorem for quasi-Hopf bimodules (Remark \ref{rem3.3.14}) as corollaries. Moreover, we are able to give a proof of the Structure Theorem of Bulacu and Caenepeel that doesn't require bijectivity of the quasi-antipode and is not long nor technical (Remark \ref{rem3.3.15}). Unfortunately, and unlike the dual quasi case, we are not able to exhibit an explicit example of a quasi-bialgebra with preantipode that does not admit a quasi-antipode and so we cannot say with certainty that the two concepts don't coincide, though it is very likely to be so. Nevertheless, even if it will turn out that the two are equivalent, we took a step forward. Indeed, the preantipode is actually a more handy tool than the quasi-antipode. Primarily, because it is composed by a single data: the map $\varfun{S}{A}{A}$. Secondly, because it is unique (see Theorem \ref{th3.3.9}) and not just unique up to an invertible element (as the quasi-antipode is).

Even if we believe that quasi-bialgebras with preantipode are a strictly larger class of quasi-bialgebras with respect to quasi-Hopf algebras, in \ref{section5.1} we will be able to exhibit a number of cases in which the two structures are equivalent. For example: ordinary bialgebras viewed as quasi via the trivial reassociator (Proposition \ref{prop3.2.30}), commutative quasi-bialgebras (Corollary \ref{cor3.3.11}) and, last but not the least, finite-dimensional quasi-bialgebras.

In some of these cases we are able to recover explicitly the quasi-antipode from the preantipode, as we will show at the very end of Section \ref{section5}, for example when the distinguished element $\alpha$ is invertible. We will also highlight that we can do it for much of the best known examples of non-trivial quasi-Hopf algebra. Nevertheless, up to this moment, we are not able to give general guidelines to recover the quasi-antipode from the preantipode, even in the finite-dimensional case. The heart of the problem lies in the fact that Schauenburg's proof invokes the Krull-Schmidt Theorem and this is a non-constructive result. Hence, as we will see, the relation between the quasi-antipode and the preantipode hides behind an unknown isomorphism $\widetilde{\gamma}$ (cfr. proof of Theorem \ref{thSch}).

\section{Preliminaries}\label{section2}

Recall that (see \cite[Chapter XI]{Ka}) a \textbf{monoidal category} $(\mathcal{M},\otimes,\I,a,l,r)$ is a category $\mathcal{M}$ equipped with a functor $\varfun{\otimes}{\mathcal{M}\times\mathcal{M}}{\mathcal{M}}$ (called \textbf{tensor product}) and with a distinguished object $\I$ (called the \textbf{unit}) such that $\otimes$ is associative `up to' a natural isomorphism $a$, $\I$ is a left and right unit for $\otimes$ `up to' natural isomorphisms $l$ and $r$ respectively and `all' diagrams involving $a$, $l$ and $r$ commute. Formally, this means that we have three natural isomorphisms:
\begin{gather*}
\varfun{a}{\otimes(\otimes \times \id_\mathcal{M})}{\otimes\,(\id_\mathcal{M}\times \otimes)} \quad\mathrm{\textbf{associativity constraint}} \\
\varfun{l}{\otimes(\I\times \id_\mathcal{M})}{\id_\mathcal{M}} \quad\mathrm{\textbf{left unit constraint}}\\
\varfun{r}{\otimes(\id_\mathcal{M}\times \I)}{\id_\mathcal{M}} \quad\mathrm{\textbf{right unit constraint}}
\end{gather*}
that satisfy the \textbf{Pentagon Axiom} and the \textbf{Triangle Axiom}:
\begin{equation*}
\xymatrix @!0 @C=65pt @R=40pt{\ar@{}[ddrrrr]|{\circlearrowleft}
 & ((M\otimes N)\otimes P)\otimes Q \ar[rr]^-{a_{M,N,P}\otimes Q} \ar[dl]|{a_{M\otimes N,P,Q}} & & (M\otimes (N\otimes P))\otimes Q \ar[dr]|{a_{M,N\otimes P,Q}} & \\
(M\otimes N)\otimes (P\otimes Q) \ar[drr]|{a_{M,N,P\otimes Q}} & & & & M\otimes ((N\otimes P)\otimes Q) \ar[dll]|{M\otimes a_{N,P,Q}} \\
 & & M\otimes (N\otimes (P\otimes Q)) & & 
}
\end{equation*}
\begin{equation*}
\xymatrix @!0 @=55pt{
(M\otimes \I)\otimes N \ar[rr]^-{a_{M,\I,N}} \ar[dr]_-{r_M\otimes N} \ar@{}[drr]|{\circlearrowleft} & & M\otimes (\I\otimes N) \ar[dl]^-{M\otimes l_N} \\
 & M\otimes N & 
}
\end{equation*}
for all $M$, $N$, $P$, $Q$ in $\mathcal{M}$.

The notions of algebra, module over an algebra, coalgebra and comodule over a coalgebra can be introduced in the general setting of monoidal categories. Note that we will always request algebras to be associative and unital and coalgebras to be coassociative and counital.

Let $\K$ be a fixed field and denote by $\M$ the category of $\K$-vector spaces. This is a monoidal category with tensor product and unit given by $\otimes_{{}_\K}$ and $\K$ respectively. The associativity and unit constraints are the obvious maps. Henceforth, all vector spaces, (co)algebras and linear maps are understood to be over $\K$. The unadorned tensor product $\otimes$ will denote the tensor product over $\K$, if not stated differently. In order to deal with the comultiplication and the coaction, we use the following variation of \textbf{Sweedler's Sigma Notation} (cf. \cite[Sec. 1.2]{Swe}):
$$\Delta(x):=x_1\otimes x_2 \qquad \mathrm{and} \qquad \rho(n):=n_0\otimes n_1$$
for all $x\in C$, $C$ coalgebra, and for all $n\in N$, $N$ $C$-comodule (summation understood).

A \textbf{quasi-bialgebra}, as introduced by Drinfel'd in \cite{Dri}, is a sextuple $(A,m,u,\Delta,\varepsilon,\Phi)$ where:
\begin{itemize}
\item $(A,m,u)$ is an associative unital $\K$-algebra,
\item $\varfun{\Delta}{A}{A\otimes A}$ and $\varfun{\varepsilon}{A}{\K}$ are algebra maps, called \textbf{comultiplication} and \textbf{counit},
\item $\Phi\in A\otimes A\otimes A$ is an invertible element (the \textbf{Drinfel'd reassociator} or simply the \textbf{reassociator}) such that:
\begin{equation}\label{eq:Q1}
(A\otimes A\otimes \Delta)(\Phi)\cdot(\Delta\otimes A\otimes A)(\Phi)=(1\otimes \Phi)\cdot(A\otimes \Delta\otimes A)(\Phi)\cdot(\Phi\otimes 1)
\end{equation}
\begin{equation}\label{eq:Q2}
(\varepsilon\otimes A\otimes A)(\Phi)=1\otimes 1=(A\otimes \varepsilon\otimes A)(\Phi)=1\otimes 1=(A\otimes A\otimes \varepsilon)(\Phi)
\end{equation}
\item $\Delta$ is counital with counit $\varepsilon$ and it is quasi-coassociative, meaning that the following relations hold for all $a\in A$:
\begin{equation}\label{eq:Q3}
(A\otimes\Delta)(\Delta(a))\cdot\Phi=\Phi\cdot(\Delta\otimes A)(\Delta(a))
\end{equation}
\begin{equation}\label{eq:Q4}
l_A(\varepsilon\otimes A)(\Delta(a)) = a
\end{equation}
\begin{equation}\label{eq:Q5}
r_A(A\otimes\varepsilon)(\Delta(a)) = a
\end{equation}
\end{itemize}

Hereafter we will usually omit the constraints, in order to lighten the notation.

Let $(A,m,u,\Delta,\varepsilon,\Phi)$ be a quasi-bialgebra. A \textbf{gauge transformation} (or \emph{twist}; cf. \cite{Dri}) on $A$ is an invertible element $F$ of $A\otimes A$ such that
\begin{equation*}
(A\otimes \varepsilon)(F)=1=(\varepsilon\otimes A)(F).
\end{equation*}
Given a gauge transformation, it is possible to \textbf{twist} the quasi-bialgebra $A$ via $F$ by considering $A_F:=(A,m,u,\Delta_F,\varepsilon,\Phi_F)$ where, for all $a\in A$
\begin{gather*}
\Delta_F(a):=F\cdot\Delta(a)\cdot F^{-1}, \\
\Phi_F:=(1\otimes F)\cdot(A\otimes \Delta)(F)\cdot\Phi\cdot(\Delta\otimes A)(F^{-1})\cdot(F^{-1}\otimes 1).
\end{gather*}
This is still a quasi-bialgebra (cf. \cite[Remark, p. 1422]{Dri}), i.e. quasi-bialgebras form a class closed under gauge transformation.

Moreover, the axioms of quasi-bialgebra are necessary and sufficient to state that the category ${}_A^{}\M$ of left modules over a quasi-bialgebra $A$ is monoidal in the following way. Given a left $A$-module $M$, we denote by $\sfun{\mu =\mu _{M}^{l}}{A\otimes M}{M}{a\otimes m}{a\cdot m}$, its left $A$-action. The tensor product of two left $A$-modules $M$ and $N$ is a left $A$-module itself via the diagonal action i.e. $a\cdot(m\otimes n) =(a_{1}\cdot m)\otimes (a_{2}\cdot n)$. The unit is $\K$, which is regarded as a left $A$-module via the trivial action, i.e. $a\cdot k =\varepsilon(a)k$. The associativity and unit constraints are defined, for all $M,N,P\in {}_A^{}\M$ and $m\in M,n\in N,p\in P,k\in \K ,$ by
\begin{gather*}
{({}_Aa})_{M,N,P}((m\otimes n)\otimes p):=\Phi\cdot (m\otimes (n\otimes p)), \\
l_M(k\otimes m):=km\qquad \text{and}\qquad r_M(m\otimes k):=mk.
\end{gather*}

We will denote this category by $({}_A^{}\M,\otimes, \K,{}_Aa,l,r)$. Analogously, we can define the monoidal categories $(\M{}_A^{},\otimes ,\K ,a_A,l,r)$ and $(\bimodcat{A},\otimes ,\K ,{_{A}}a{_{A}},l,r)$,
where:
\begin{gather*}
{(a_A)}_{M,N,P}((m\otimes n)\otimes p):=(m\otimes (n\otimes p))\cdot\Phi^{-1}, \\
({_{A}}a{_{A}})_{M,N,P}((m\otimes n)\otimes p):=\Phi\cdot(m\otimes (n\otimes p))\cdot\Phi^{-1}.
\end{gather*}

\begin{remark}(\cite[Section 3]{HN})
Recall that we need an analogue of the notion of Hopf module for quasi-bialgebras. Unfortunately, a quasi-bialgebra $A$ is not a coalgebra in $\M$ and, in general, neither in ${}_A^{}\M$ nor in $\M_A^{}$. However, $(A,m,m)$ as an $(A,A)$-bimodule, endowed with $\Delta$ and $\varepsilon$, becomes a coalgebra in ${}_A^{}\M{}_A^{}$ and so we can consider the category of the so-called \textbf{(right) quasi-Hopf bimodules}
$$\quasihopf{A}:=\left({}_A\M_A\right)^A.$$
The compatibility conditions with comultiplication and counit that a coaction should satisfy rewrites:
\begin{gather}
m_0\,\varepsilon(m_1)=m, \label{eq:QH1}\\
(m_0\otimes (m_1)_1\otimes (m_1)_2)\cdot\Phi=\Phi\cdot((m_0)_0\otimes (m_0)_1\otimes m_1) \label{eq:QH2}.
\end{gather}
for all $m\in M$, $M\in\quasihopf{A}$ (cf. \cite[Definition 3.1]{HN}).
\end{remark}

\subsection{An adjunction between \texorpdfstring{$\quasihopf{A}$}{} and \texorpdfstring{${}_A\M$}{}.}\label{section2.1}

Here we construct the analogue for quasi-Hopf bimodules of the adjunction between the category of vector spaces and the category of Hopf modules (for details about the classical \lq Hopf\rq\, result, cf. \cite[Theorem 15.5]{BrWi}).

\begin{lemma}(see also \cite[Section 2]{Sch2})\label{lemma2.2}
Let $(C,\Delta,\varepsilon)$ be a coalgebra in a monoidal category $(\mathcal{M},\otimes,\I,a,l,r)$ and let $\mathcal{M}^C$ be the category of right $C$-comodules and $(N,\rho_N)$ a $C$-comodule. The assignments
$$M \mapsto \left(M\otimes N,{(a_{M,N,C})}^{-1}\circ(M\otimes \rho_N)\right)\qquad \mathrm{and}\qquad f\mapsto f\otimes N$$
define a functor $\varfun{T_N}{\mathcal{M}}{\mathcal{M}^C}$. Moreover, if $N=C$, then $T_C$ is right adjoint to $\varfun{U}{\mathcal{M}^C}{\mathcal{M}}$, the underlying functor.
\end{lemma}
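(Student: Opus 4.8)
The plan is to verify directly that the assignments given do define a functor, and then to exhibit the adjunction isomorphism and check naturality. First I would check that for each object $M$ of $\mathcal{M}$ the pair $\bigl(M\otimes N,\,(a_{M,N,C})^{-1}\circ(M\otimes\rho_N)\bigr)$ is indeed a right $C$-comodule: the counit axiom amounts to chasing a diagram built from $r_{M\otimes N}$, the triangle axiom, and the counit axiom for $(N,\rho_N)$, while the coassociativity axiom is a pentagon-type chase combining the coassociativity of $\rho_N$, the naturality of $a$, and the Pentagon Axiom applied to the quadruple $M,N,C,C$. The functoriality on morphisms ($f\mapsto f\otimes N$ respects identities and composition, and $f\otimes N$ is a morphism of comodules) follows from the bifunctoriality of $\otimes$ together with the naturality of $a$ in its first variable. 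This is all routine diagram manipulation in a monoidal category.

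For the adjunction, specialize to $N=C$. I would produce mutually inverse natural bijections between $\mathcal{M}^C\bigl((X,\rho_X),\,T_C(M)\bigr)$ and $\mathcal{M}\bigl(U(X,\rho_X),\,M\bigr)=\mathcal{M}(X,M)$. In one direction, a $C$-colinear map $g\colon X\to M\otimes C$ is sent to $(M\otimes\varepsilon)\circ g$ composed with $r_M^{-1}$'s inverse, i.e. to $r_M\circ(M\otimes\varepsilon)\circ g\colon X\to M$; in the other direction, a map $h\colon X\to M$ is sent to $(h\otimes C)\circ\rho_X\colon X\to M\otimes C$, which one checks is $C$-colinear using the coassociativity of $\rho_X$ and the definition of the comodule structure on $T_C(M)$. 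The fact that these two assignments are inverse to each other uses, on one side, the counit axiom for $\rho_X$ together with the unit constraints (triangle axiom), and on the other side the counit axiom in the form $(M\otimes\varepsilon)\circ\rho_C$-type identities for the coalgebra $C$; naturality in both $X$ and $M$ is again a bifunctoriality-plus-naturality check.

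The main obstacle I anticipate is purely bookkeeping rather than conceptual: keeping the associativity and unit constraints $a$, $l$, $r$ straight throughout, since in a general (non-strict) monoidal category the comodule structure on $M\otimes N$ genuinely involves $a^{-1}$, and the verifications of the comodule axioms and of the triangle-identity computations for the unit/counit of the adjunction must invoke the Pentagon and Triangle Axioms at exactly the right places. By Mac Lane's coherence theorem one could suppress these, but I would keep at least a schematic diagram for the coassociativity check (the pentagon for $M,N,C,C$) since that is where the only non-formal input enters. Once that diagram commutes, everything else assembles formally from naturality of $a$ and bifunctoriality of $\otimes$.
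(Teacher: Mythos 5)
Your proposal is correct: the comodule-axiom checks via the Pentagon/Triangle axioms and naturality of $a$, together with the hom-set bijection $\mathcal{M}(U(X,\rho_X),M)\cong\mathcal{M}^C\bigl((X,\rho_X),T_C(M)\bigr)$ given by $g\mapsto r_M\circ(M\otimes\varepsilon)\circ g$ and $h\mapsto (h\otimes C)\circ\rho_X$, is exactly the standard argument. The paper itself offers no proof of this lemma (it is stated with a pointer to Schauenburg), so your direct verification simply supplies the routine details the paper leaves to the reference, and it does so correctly, including the right adjunction direction $U\dashv T_C$.
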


Therefore we have an adjunction $(U,T)$ between $\quasihopf{A}$ and ${}_A^{}\M_A^{}$ with right adjoint given by:
$$\fun{T}{{}_A\M_A}{\quasihopf{A}}{{}_\bullet^{} M_\bullet^{}}{{}_\bullet^{} M_\bullet^{}\otimes \quasihopfmod{A}}$$
where the full dots denote the given actions and coaction. Explicitly, if we denote by
$$\varfun{\rho_{M\otimes A}}{M\otimes A}{(M\otimes A)\otimes A}$$
the coaction on $M\otimes A$, we have:
\begin{subequations}
\begin{gather}
a\cdot(m\otimes b)=a_1\cdot m\otimes a_2b \\
(m\otimes b)\cdot a=m\cdot a_1\otimes ba_2 \\
\rho_{M\otimes A}(m\otimes a)=\Phi^{-1}\cdot((m\otimes a_1)\otimes a_2)\cdot\Phi \label{eq:8}
\end{gather}
\end{subequations}
for all $a,b\in A$ and $m\in M$.

Let $M$ be an $(A,A)$-bimodule and consider the left $A$-module
$$\overline{M}:=\frac{M}{MA^+},$$
where $A^+:=\ker(\varepsilon)$ is the augmentation ideal of $A$. The assignment $\sfun{L}{{}_A^{}\M{}_A^{}}{{}_A^{}\M}{M}{\overline{M}}$ defines a functor that is left adjoint to the functor $\sfun{R}{{}_A^{}\M}{{}_A^{}\M{}_A^{}}{{}_\bullet N}{{}_\bullet N_\circ}$, where the empty dot denotes the trivial action (i.e. $n\cdot a:=n\,\varepsilon(a)$ for all $n\in N$, $a\in A$). Define
\begin{equation*}
\varfun{F:=LU}{\quasihopf{A}}{{}_A^{}\M} \quad\mathrm{and}\quad \varfun{G:=TR}{{}_A^{}\M}{\quasihopf{A}}
\end{equation*}
so that for any $M\in\quasihopf{A}$ and $N\in {}_A^{}\M$ we have that $F(\quasihopfmod{M})=\lmod{\overline{M}}$ with left action given by
\begin{equation*}
a\cdot \cl{m}=\cl{a\cdot m}
\end{equation*}
for all $a\in A$, $m\in M$, and $G(\lmod{N})={}_\bullet^{}N{}_\circ^{}\otimes \quasihopfmod{A}$ with left action, right action and coaction given by
\begin{subequations}
\begin{gather}
a\cdot(n\otimes b)=a_1\cdot n\otimes a_2b, \label{eq:7a} \\
(n\otimes b)\cdot a=n\otimes ba, \label{eq:7b} \\
\rho(n\otimes b)=\Phi^{-1}\cdot ((n\otimes b_1)\otimes b_2), \label{eq:7c}
\end{gather}
\end{subequations}
for all $a,b\in A$, $n\in N$, respectively. Note also that since for any $a$ in $A$ we have that $a-\varepsilon(a)1_A\in A^+$, if $M$ is an $(A,A)$-bimodule then in the quotient $\frac{M}{MA^+}=\overline{M}$ the following relation holds for every $m\in M, a\in A$:
\begin{equation*}
\cl{m\cdot a}=\cl{m}\,\varepsilon(a).
\end{equation*}

This fact will be used frequently in the sequel without explicit mention.

\begin{remark}
Denote by $\mu_{N\otimes A}$ the \emph{right} $A$-action defined in \eqref{eq:7b}. It is easy to see that
\begin{equation}\label{eq:8bis}
\mu_{N\otimes A}\circ (N\otimes u\otimes A)\circ \left(r^{-1}_N\otimes A\right)=\id_{N\otimes A}
\end{equation}
for any left $A$-module $N$.
This will be useful in proving Theorem \ref{Th2.4}.
\end{remark}

The content of the subsequent theorem is essentially the same of \cite[Proposition 3.6]{Sch2}. 

\begin{theorem}\label{thAdj}
The functor $\sfun{F}{\quasihopf{A}}{{}_A^{}\M}{M}{\overline{M}}$ defined above is left adjoint to the functor $\sfun{G}{{}_A^{}\M}{\quasihopf{A}}{{}_\bullet^{}N}{{}_\bullet^{}N{}_\circ^{}\otimes \quasihopfmod{A}}$. Moreover, the unit $\eta$ and counit $\epsilon$ of this adjunction are given by:
\begin{gather}
\sfun{\eta_M}{M}{\overline{M}\otimes A}{m}{\cl{m_0}\otimes m_1}, \label{eq:unit} \\
\sfun{\epsilon_N}{\overline{N\otimes A}}{N}{\cl{n\otimes a}}{n\,\varepsilon(a)} \label{eq:counit}
\end{gather}
for all $M\in \quasihopf{A}$, $N\in {}_A\M$, and $\epsilon$ is always a natural isomorphism. 
\end{theorem}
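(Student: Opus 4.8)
The plan is to deduce the adjunction $F\dashv G$ by composing the two adjunctions already available in this subsection, and then to read off the unit and counit explicitly. Recall that Lemma~\ref{lemma2.2}, applied to the coalgebra $A$ inside ${}_A\M_A$, gives $U\dashv T$ whose unit $\eta^{UT}_M\colon M\to M\otimes A$ is the coaction $\rho_M$ and whose counit $\epsilon^{UT}_X\colon X\otimes A\to X$ is $x\otimes a\mapsto x\varepsilon(a)$; and the text above exhibits $L\dashv R$ whose unit $\eta^{LR}_M\colon M\to\overline M$ is the canonical projection. A point worth singling out is that $\epsilon^{LR}$ is the identity transformation: $R(N)$ carries the trivial right action, so $R(N)A^+=0$ and $\overline{R(N)}=N$, whence the counit $LR(N)\to N$ of $L\dashv R$ is $\mathrm{id}_N$. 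Since $F=LU$ and $G=TR$, the general fact that composites of adjoint pairs are adjoint yields $F\dashv G$, with unit $\eta=(T\eta^{LR}_{U})\circ\eta^{UT}$ and counit $\epsilon=\epsilon^{LR}\circ(L\epsilon^{UT}_{R})$.

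Evaluating these: for $M\in\quasihopf{A}$ one gets $\eta_M(m)=(T\eta^{LR}_{U(M)})(\rho_M(m))=(\pi\otimes A)(m_0\otimes m_1)=\overline{m_0}\otimes m_1$, which is \eqref{eq:unit}; for $N\in{}_A\M$, the map $L\epsilon^{UT}_{R(N)}$ is the one induced on quotients by $n\otimes a\mapsto n\varepsilon(a)$, and composing with $\epsilon^{LR}_N=\mathrm{id}$ gives $\epsilon_N(\overline{n\otimes a})=n\varepsilon(a)$, which is \eqref{eq:counit}. The only auxiliary verification needed here is that $\epsilon^{UT}_{R(N)}\colon N\otimes A\to N$ is a morphism of bimodules when $N$ carries the trivial right action, which is immediate from the multiplicativity of $\varepsilon$ and the $A$-actions in \eqref{eq:7}.

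It remains to check that $\epsilon_N$ is an isomorphism, the one step that calls for a real argument. I would exhibit the candidate inverse $N\to\overline{N\otimes A}$, $n\mapsto\overline{n\otimes 1_A}$. That $\epsilon_N$ followed by this map is $\mathrm{id}_N$ is immediate; for the reverse composite, observe that $n\otimes a-n\varepsilon(a)\otimes 1=(n\otimes 1)\cdot(a-\varepsilon(a)1)\in(N\otimes A)A^+$ by the right action in \eqref{eq:7}, so $\overline{n\otimes a}=\overline{n\varepsilon(a)\otimes 1}$ and the composite sends $\overline{n\otimes a}$ to itself. Finally, left $A$-linearity of $n\mapsto\overline{n\otimes 1}$ follows by combining $(n\otimes 1)\cdot a=n\otimes a$, the relation $\overline{m\cdot a}=\overline m\,\varepsilon(a)$, and counitality \eqref{eq:Q5}: $a\cdot\overline{n\otimes 1}=\overline{a_1 n\otimes a_2}=\overline{(\varepsilon(a_2)a_1)n\otimes 1}=\overline{(a\cdot n)\otimes 1}$. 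Thus $\epsilon_N^{-1}$ is a well-defined map in ${}_A\M$ and $\epsilon$ is a natural isomorphism. The only obstacle is organizational: identifying the component units and counits correctly (in particular the collapse of $\epsilon^{LR}$ to the identity) while keeping the monoidal constraints of $\M$ suppressed, and then the $A$-linearity check, for which the explicit right action on $N\otimes A$ and the counit axiom are precisely what is required.
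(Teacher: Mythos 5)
Your proof is correct and follows exactly the route the paper sets up (and leaves to the cited reference): composing the adjunction $U\dashv T$ from Lemma \ref{lemma2.2} with $L\dashv R$, reading off $\eta$ and $\epsilon$ from the composite, and inverting $\epsilon_N$ via $n\mapsto\cl{n\otimes 1}$ using the trivial right action in \eqref{eq:7}. The details you check (collapse of the $L\dashv R$ counit, the relation $\cl{n\otimes a}=\cl{n\varepsilon(a)\otimes 1}$, and left $A$-linearity via \eqref{eq:Q5}) are accurate, so nothing is missing.
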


\section{The preantipode for quasi-bialgebras}\label{section4}

This section is devoted to introduce the notion of preantipode and its properties. We start by showing that to be an equivalence for the adjunction $(F,G,\eta,\epsilon)$ of Theorem \ref{thAdj} is equivalent to the existence of a suitable map $\tilde{\tau}_M$ for every quasi-Hopf module $M$. After this, we investigate the relationship between the preantipode and the maps $\tilde{\tau}_M$. The main result of this section is Theorem \ref{FundStructTheo}, where we show how the preantipode is a proper analogue of the antipode for quasi-bialgebras.

\begin{theorem}\label{Th2.4}\emph{(Dual to \cite[Proposition 3.3]{AP1})}
Let $(A,m,u,\Delta,\varepsilon,\Phi)$ be a quasi-bialgebra. The following assertions are equivalent:
\begin{itemize}
\item[(i)] The adjunction $(F,G,\eta,\epsilon)$ is an equivalence of categories.
\item[(ii)] For each $M\in{}_A^{}\M{}_A^A$, there exists a $\K$-linear map $\varfun{\tilde{\tau}_M}{\cl{M}}{M}$ such that, for all $m\in M$:
\begin{gather}
\tilde{\tau}_M(\cl{m_0})\cdot m_1=m, \label{eq:9} \\
\cl{\tilde{\tau}_M(\cl{m})_0}\otimes \tilde{\tau}_M(\cl{m})_1=\cl{m}\otimes 1. \label{eq:10}
\end{gather}
\end{itemize}
\end{theorem}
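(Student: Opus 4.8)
The plan is to prove the equivalence by unwinding what it means for the adjunction $(F,G,\eta,\epsilon)$ to be an equivalence. Since $\epsilon$ is always a natural isomorphism by Theorem \ref{thAdj}, the adjunction is an equivalence precisely when the unit $\eta$ is a natural isomorphism as well, i.e. when each component $\sfun{\eta_M}{M}{\cl{M}\otimes A}{m}{\cl{m_0}\otimes m_1}$ is bijective. So the whole statement reduces to showing that $\eta_M$ is bijective for every $M\in\quasihopf{A}$ if and only if the maps $\tilde\tau_M$ of condition (ii) exist.

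For the implication (i)$\Rightarrow$(ii), I would assume each $\eta_M$ is invertible and define $\tilde\tau_M$ by composing the inverse $\eta_M^{-1}\colon\cl{M}\otimes A\to M$ with the canonical embedding of $\cl{M}$ into $\cl{M}\otimes A$ given by $\cl{m}\mapsto\cl{m}\otimes 1$; concretely, $\tilde\tau_M(\cl{m}):=\eta_M^{-1}(\cl{m}\otimes 1)$. Then equation \eqref{eq:9} should follow from applying $\eta_M$ to both sides and using $\eta_M\eta_M^{-1}=\id$ together with the explicit formula for $\eta_M$ and the quasi-Hopf bimodule axioms \eqref{eq:QH1}--\eqref{eq:QH2} (in particular, $\eta_M(\tilde\tau_M(\cl{m_0})\cdot m_1)$ should collapse to $\cl{m_0}\otimes m_1=\eta_M(m)$ after using counitality of the coaction). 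Equation \eqref{eq:10} is just the statement that $\eta_M(\tilde\tau_M(\cl{m}))=\cl{m}\otimes 1$, which holds by definition of $\tilde\tau_M$ and $\eta_M\eta_M^{-1}=\id$. One has to check $\eta_M^{-1}(\cl m\otimes 1)$ lands in $M$ as a $\K$-linear map, which is automatic.

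For the converse (ii)$\Rightarrow$(i), given the family $\tilde\tau_M$ I would build a candidate inverse for $\eta_M$. The natural guess is the map $\cl{M}\otimes A\to M$ sending $\cl{m}\otimes a\mapsto \tilde\tau_M(\cl m)\cdot a$; one must first check this is well defined on the quotient $\cl M=M/MA^+$, i.e. that $\tilde\tau_M(\cl{m\cdot a^+})\cdot b=0$ for $a^+\in A^+$, which should follow from \eqref{eq:10} combined with $\cl{m\cdot a}=\cl m\,\varepsilon(a)$ and the $A$-bimodule/comodule structure. Then: composing this map after $\eta_M$ and simplifying via \eqref{eq:9} gives the identity on $M$; composing it before $\eta_M$ and using \eqref{eq:10} (to handle the coaction part) together with $\cl{m\cdot a}=\cl m\,\varepsilon(a)$ and the right-$A$-linearity of everything in sight should give the identity on $\cl M\otimes A$. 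Finally, naturality of the resulting inverse follows from naturality of $\eta$ (an isomorphism in a category of functors is automatically natural, or one checks it directly from the naturality square for $\eta$), so $\eta$ is a natural isomorphism and $(F,G,\eta,\epsilon)$ is an equivalence.

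The main obstacle I anticipate is the well-definedness and the ``compose-before-$\eta_M$'' direction in (ii)$\Rightarrow$(i): one has to track the reassociator $\Phi$ carefully through the coaction \eqref{eq:8}/\eqref{eq:7} on $\cl M\otimes A$ and verify that condition \eqref{eq:10}, which only controls $\tilde\tau_M(\cl m)_0$ and $\tilde\tau_M(\cl m)_1$ rather than the full bimodule structure, is exactly strong enough to kill all the $\Phi$-terms and return $\cl m\otimes a$. This is the step where the specific form of the axioms is used essentially, and it is the dual of the corresponding delicate point in \cite[Proposition 3.3]{AP1}; the rest is bookkeeping with Sweedler notation and the adjunction identities.
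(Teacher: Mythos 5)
Your proposal is correct and follows essentially the same route as the paper: since $\epsilon$ is always a natural isomorphism, one reduces to bijectivity of each $\eta_M$, sets $\tilde{\tau}_M(\cl{m})=\eta_M^{-1}(\cl{m}\otimes 1)$ for (i)$\Rightarrow$(ii), and takes $\cl{m}\otimes a\mapsto \tilde{\tau}_M(\cl{m})\cdot a$ as the two-sided inverse for (ii)$\Rightarrow$(i), exploiting the right $A$-linearity of $\eta_M$ (and of $\eta_M^{-1}$) as morphisms of quasi-Hopf bimodules. The obstacles you anticipate are in fact non-issues: well-definedness is automatic because $\tilde{\tau}_M$ is already given on the quotient $\cl{M}$, and no reassociator terms arise because the computation only involves the right actions (the coaction never enters) and the right $A$-action on the first factor of $\cl{M}\otimes A$ is trivial, exactly as in the paper's use of the identity $\mu_{\cl{M}\otimes A}\circ(\cl{M}\otimes u\otimes A)\circ(r_{\cl{M}}^{-1}\otimes A)=\id$.
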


For the sake of brevity, we will often omit the subscript $M$ and we will denote $\tilde{\tau}_M$ just by $\tilde{\tau}$, when there isn't the risk of confusion.

\begin{proof}
Let us begin by observing that \eqref{eq:9} and \eqref{eq:10} can be rewritten as:
\begin{equation*}
\mu_M\circ(\tilde{\tau}\otimes A)\circ \eta_M = \id_M \qquad \mathrm{and} \qquad \eta_M\circ \tilde{\tau} = (\overline{M}\otimes u)\circ r_{\overline{M}}^{-1}
\end{equation*}
respectively, where $\mu_M$ denotes the \emph{right} $A$-action on $M$, and these suggest us how to define $\tilde{\tau}$ having $\eta_M^{-1}$ or viceversa. \\
$(i)\Rightarrow(ii)$. By hypothesis $(F,G,\eta,\epsilon)$ is an equivalence, so that $\eta$ is a natural isomorphism. For each $M\in \quasihopf{A}$ define
\begin{equation}\label{eq:tauproof}
\tilde{\tau}=\eta_M^{-1}\circ (\overline{M}\otimes u)\circ r_{\overline{M}}^{-1},
\end{equation}
i.e. $\tilde{\tau}(\cl{m})=\eta_M^{-1}(\cl{m}\otimes 1)$ for all $m\in M$. It is clear that \eqref{eq:10} holds, so let us verify \eqref{eq:9}. Since $\eta$ is a morphism of quasi-Hopf bimodules, the same holds true for $\eta^{-1}$. Therefore
\begin{displaymath}
\xymatrix @C=40pt{\ar@{}[dr]|{\circlearrowleft}
(\overline{M}\otimes A)\otimes A \ar[r]^-{\eta_M^{-1}\otimes A} \ar[d]_-{\mu_{\overline{M}\otimes A}} & M\otimes A \ar[d]^-{\mu_M} \\
\overline{M}\otimes A \ar[r]_-{\eta_M^{-1}} & M
}
\end{displaymath}
commutes, i.e. $\mu_M\circ (\eta_M^{-1}\otimes A)=\eta_M^{-1}\circ \mu_{\overline{M}\otimes A}$, and
\begin{displaymath}
\begin{split}
\mu_M\circ (\tilde{\tau}\otimes A)\circ\eta_M & \stackrel{\eqref{eq:tauproof}}{=} \mu_M\circ\left(\eta_M^{-1}\otimes A\right)\circ\left(\overline{M}\otimes u\otimes A\right)\circ \left(r_{\overline{M}}^{-1}\otimes A\right)\circ \eta_M \\
  & \stackrel{\phantom{(15)}}{=} \eta_M^{-1}\circ \mu_{\overline{M}\otimes A}\circ\left(\overline{M}\otimes u\otimes A\right)\circ \left(r_{\overline{M}}^{-1}\otimes A\right)\circ \eta_M \\
  & \stackrel{\eqref{eq:8bis}}{=} \eta_M^{-1}\circ\eta_M=\id_M.
\end{split}
\end{displaymath}

$(ii)\Rightarrow (i)$. Assume that $\varfun{\tilde{\tau}}{\overline{M}}{M}$ exists for any quasi-Hopf bimodule $M$ and define
\begin{equation}\label{eq:etainv}
\lambda_M=\mu_M\circ(\tilde{\tau}\otimes A).
\end{equation}
From \eqref{eq:9} we deduce that $\lambda_M\circ \eta_M=\id_M$. On the other hand, a direct check shows that:
\begin{displaymath}
\begin{split}
\eta_M\circ\lambda_M & =\eta_M\circ \mu_M\circ (\tilde{\tau}\otimes A) \stackrel{\phantom{(16)}}{=}\mu_{\overline{M}\otimes A}\circ (\eta_M\otimes A)\circ(\tilde{\tau}\otimes A) \\
 & \stackrel{\eqref{eq:10}}{=} \mu_{\overline{M}\otimes A}\circ(\overline{M}\otimes u\otimes A)\circ (r_{\overline{M}}^{-1}\otimes A) \stackrel{\eqref{eq:8bis}}{=} \id_{\overline{M}\otimes A}
\end{split}
\end{displaymath}
for each $M\in \quasihopf{A}$ and therefore $\eta$ is a natural isomorphism.
\end{proof}

\subsection{The preantipode and the Structure Theorem}\label{section4.1}

\begin{definition}\label{defpreantipode}
A \textbf{preantipode} for a quasi-bialgebra $(A,m,u,\Delta,\varepsilon,\Phi)$ is a $\K$-linear map $\varfun{S}{A}{A}$ that satisfies:
\begin{gather}
a_1S(ba_2)=\varepsilon(a)S(b), \label{eq:P1} \\
S(a_1b)a_2=\varepsilon(a)S(b), \label{eq:P2} \\
\Phi^1S(\Phi^2)\Phi^3=1, \label{eq:P3}
\end{gather}
for all $a,b\in A$, where $\Phi^1\otimes \Phi^2\otimes \Phi^3=\Phi$ (summation understood).
\end{definition}

\begin{remark}
Note that, evaluating \eqref{eq:P1} and \eqref{eq:P2} at $b=1$, we have that
\begin{equation}\label{eq:quasiconvinv}
(\id_A*S)(a)=\varepsilon(a)S(1)=(S*\id_A)(a)
\end{equation}
for all $a\in A$, where $*$ denotes the \textbf{convolution product}, i.e. $f*g=m\circ(f\otimes g)\circ \Delta$ for all $f,g\in\End(A)$. This means that a preantipode is very close to be the convolution inverse of the identity in $\End(A)$. Moreover, applying $\varepsilon$ on both sides of \eqref{eq:P3} we get that $\varepsilon\left(S(1)\right)=1$ and applying $\varepsilon$ again on both sides of the left hand equality in \eqref{eq:quasiconvinv}, we obtain that the preantipode preserves the counit:
\begin{equation}\label{eq:15}
\varepsilon\circ S=\varepsilon.
\end{equation}
\end{remark}

The following proposition is the natural generalization of \cite[Proposition 3.4]{HN} to quasi-bialgebras with preantipode.

\begin{proposition}\label{proptau}
Let $(A,m,u,\Delta,\varepsilon,\Phi,S)$ be a quasi-bialgebra with preantipode $S$ and $M\in\quasihopf{A}$. For all $a\in A$, $m\in M$, define
\begin{gather}
\tau:M\rightarrow M:m\mapsto \Phi^1\cdot m_0\cdot S(\Phi^2 m_1)\Phi^3 \label{eq:tau}, \\
a\blacktriangleright m:=\tau(a\cdot m). \label{eq:17}
\end{gather}
Then, for all $a,b \in A$, $m\in M$, they satisfy:
\begin{subequations}
\begin{gather}
\tau(m\cdot a)=\tau(m)\,\varepsilon(a), \label{eq:taua} \\
\tau^2=\tau,  \label{eq:taub} \\
a\blacktriangleright \tau(m)=\tau(a\cdot m),  \label{eq:tauc} \\
a\blacktriangleright(b\blacktriangleright m)=(ab)\blacktriangleright m,  \label{eq:taud} \\
a\cdot \tau(m)=\tau(a_1\cdot m)\cdot a_2=(a_1\blacktriangleright \tau(m))\cdot a_2,  \label{eq:taue} \\
\tau(m_0)\cdot m_1=m,  \label{eq:tauf} \\
\tau(\tau(m)_0)\otimes \tau(m)_1=\tau(m)\otimes 1.  \label{eq:taug}
\end{gather}
\end{subequations}
\end{proposition}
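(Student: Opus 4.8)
The plan is to verify the seven identities \eqref{eq:taua}--\eqref{eq:taug} essentially in the order listed, using the preantipode axioms \eqref{eq:P1}--\eqref{eq:P3}, the quasi-bialgebra axioms \eqref{eq:Q1}--\eqref{eq:Q5}, the quasi-Hopf bimodule axioms \eqref{eq:QH1}--\eqref{eq:QH2}, and the derived facts \eqref{eq:quasiconvinv} and \eqref{eq:15}. First I would prove \eqref{eq:taua}: since the coaction is a right $A$-module map only up to the bimodule structure, but the left $A$-action and right $A$-action on $M$ commute, one computes $\tau(m\cdot a)=\Phi^1\cdot (m\cdot a)_0\cdot S(\Phi^2(m\cdot a)_1)\Phi^3$; because the coaction lands in $({}_A\M_A)^A$ and the right action on the target of $\rho$ is $\varepsilon$-trivial in the last tensor leg in the relevant sense, $(m\cdot a)_0\otimes(m\cdot a)_1 = (m_0\cdot a)\otimes m_1$, so $\tau(m\cdot a)=\Phi^1\cdot m_0\cdot a\cdot S(\Phi^2 m_1)\Phi^3$; then I would move $a$ past the right action and use that $S(\Phi^2 m_1)$ absorbs it via \eqref{eq:P2} after reindexing — more carefully, I expect the cleaner route is to observe $\tau(m\cdot a) = \tau(m)\varepsilon(a)$ follows because $a$ can be pulled to act on the right of $m_0$ and then the $S$-term together with \eqref{eq:P2} collapses it to $\varepsilon(a)$ times the rest. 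Next, \eqref{eq:tauf}: expand $\tau(m_0)\cdot m_1 = \Phi^1\cdot (m_0)_0\cdot S(\Phi^2(m_0)_1)\Phi^3 m_1$ and use the coassociativity-type relation \eqref{eq:QH2}, $(m_0\otimes(m_1)_1\otimes(m_1)_2)\Phi = \Phi\cdot((m_0)_0\otimes(m_0)_1\otimes m_1)$, to rewrite the $\Phi$-sandwich; after this substitution the three $\Phi$-legs reorganize so that \eqref{eq:P3} (in the form $\Phi^1 S(\Phi^2)\Phi^3=1$) applies, together with the convolution identity \eqref{eq:quasiconvinv} and the counit axioms \eqref{eq:Q2}, collapsing everything to $m_0\varepsilon(m_1)=m$ by \eqref{eq:QH1}. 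This is the computational heart of the argument and I expect it to be the main obstacle: it requires carefully tracking the Sweedler indices of $\Phi$ through \eqref{eq:QH2} and \eqref{eq:Q1}, and recognizing exactly when \eqref{eq:P1}/\eqref{eq:P2} and \eqref{eq:P3} can be invoked.

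Once \eqref{eq:tauf} is in hand, I would derive \eqref{eq:taue}: compute $a\cdot\tau(m)$, use that the coaction \eqref{eq:QH2} is a left $A$-module map and that $\Delta$ is an algebra map and quasi-coassociative \eqref{eq:Q3}, to move $\Delta(a)$ through the $\Phi$-legs and the coaction legs of $m$; the key point is that $a_1$ ends up acting on $m_0$ inside $\tau$ while $a_2$ is left over on the right, giving $\tau(a_1\cdot m)\cdot a_2$; the second equality in \eqref{eq:taue} is then just the definition \eqref{eq:17} of $\blacktriangleright$. Then \eqref{eq:taub} ($\tau^2=\tau$): apply \eqref{eq:taue} with the roles reorganized, or more directly compute $\tau(\tau(m))$ and use \eqref{eq:tauf} applied to the element $\tau(m)$ after first establishing \eqref{eq:taug}; alternatively, $\tau^2=\tau$ follows from \eqref{eq:tauf} together with \eqref{eq:taug} since $\tau(\tau(m)) = \tau(\tau(m)_0)\varepsilon(\tau(m)_1) = \tau(m)\varepsilon(1) = \tau(m)$ using \eqref{eq:taua} in the first step — so I would prove \eqref{eq:taug} before \eqref{eq:taub}.

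For \eqref{eq:taug}, $\tau(\tau(m)_0)\otimes\tau(m)_1 = \tau(m)\otimes 1$: I would apply the coaction $\rho$ to $\tau(m)=\Phi^1\cdot m_0\cdot S(\Phi^2 m_1)\Phi^3$, using that $\rho$ is a left-module and right-module map up to the bimodule-coalgebra structure, that $\rho$ on the $\Phi^3$ and $S(\cdots)$ part is $\varepsilon$-trivial on the right leg, and that $\rho(m_0)$ is governed by \eqref{eq:QH2}; the outcome should be, after using \eqref{eq:Q1} to handle the $(A\otimes A\otimes\Delta)(\Phi)$ and $(\Delta\otimes A\otimes A)(\Phi)$ terms and \eqref{eq:P1} to absorb the extra action, that the second tensor leg collapses to $1$ and the first returns $\tau(m)$ — here again \eqref{eq:P3} and the counit normalization \eqref{eq:Q2} do the final collapsing. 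Finally \eqref{eq:tauc} is immediate from \eqref{eq:17} once \eqref{eq:taue} is known (it is literally the statement $a\blacktriangleright\tau(m)=\tau(a\cdot\tau(m))=\tau(a\cdot m)$, the last equality needing $\tau(a\cdot\tau(m))=\tau(a\cdot m)$, which follows since $\tau^2=\tau$ and \eqref{eq:taue}), and \eqref{eq:taud} follows by $a\blacktriangleright(b\blacktriangleright m) = \tau(a\cdot\tau(b\cdot m)) \overset{\eqref{eq:tauc}}{=} a\blacktriangleright\tau(b\cdot m)$ — wait, more directly $a\blacktriangleright(b\blacktriangleright m)=\tau(a\cdot\tau(b\cdot m))$ and by \eqref{eq:tauc} with $m$ replaced by $b\cdot m$ this equals $\tau(a\cdot(b\cdot m))=\tau(ab\cdot m)=(ab)\blacktriangleright m$ using associativity of the left action. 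Thus the logical order I would follow is: \eqref{eq:taua}, \eqref{eq:tauf}, \eqref{eq:taue}, \eqref{eq:taug}, \eqref{eq:taub}, \eqref{eq:tauc}, \eqref{eq:taud}, with the proof of \eqref{eq:tauf} (and the parallel computation for \eqref{eq:taug}) being where essentially all the work — and all the genuine use of the preantipode axioms — is concentrated.
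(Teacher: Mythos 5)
Your overall strategy --- direct verification of the seven identities from the preantipode axioms \eqref{eq:P1}--\eqref{eq:P3} and the quasi-Hopf bimodule axioms --- is exactly the paper's strategy, and your reductions of \eqref{eq:taub}, \eqref{eq:tauc}, \eqref{eq:taud} to the other identities are logically consistent. But there is a concrete error at your very first step, and it recurs later. The coaction of a quasi-Hopf bimodule is right $A$-linear for the \emph{diagonal} right action on $M\otimes A$, so $(m\cdot a)_0\otimes (m\cdot a)_1=m_0\cdot a_1\otimes m_1a_2$; your claim that it equals $(m_0\cdot a)\otimes m_1$ (the right action being $\varepsilon$-trivial in the last leg) is false, and your intermediate expression $\tau(m\cdot a)=\Phi^1\cdot m_0\cdot a\cdot S(\Phi^2 m_1)\Phi^3$ does not hold. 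With the correct formula, \eqref{eq:taua} is immediate from \eqref{eq:P1}, not \eqref{eq:P2}: $\tau(m\cdot a)=\Phi^1\cdot m_0\cdot a_1S(\Phi^2 m_1a_2)\Phi^3=\varepsilon(a)\,\tau(m)$. The same misconception resurfaces in your sketch of \eqref{eq:taug}, where you appeal to $\rho$ being $\varepsilon$-trivial on the right leg of the $S(\Phi^2m_1)\Phi^3$ factor; in the paper's computation the $\varepsilon$ on half of $\Delta\bigl(S(\Phi^2m_1)\Phi^3\bigr)$ is produced by applying \eqref{eq:taua} to the first tensor leg after $\tau$, not by any triviality of the coaction. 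Since \eqref{eq:taua} is the foundation of your whole dependency chain (it feeds \eqref{eq:taug}, hence \eqref{eq:taub}, hence \eqref{eq:tauc} and \eqref{eq:taud}), this must be repaired, though the repair is easy.

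Beyond that, the two identities where essentially all the work lies, \eqref{eq:tauf} and especially \eqref{eq:taug}, are left as lists of tools rather than computations, and the lists are not quite right: \eqref{eq:tauf} needs only \eqref{eq:QH2}, \eqref{eq:P2}, \eqref{eq:P3} and \eqref{eq:QH1} (no \eqref{eq:Q1}), whereas \eqref{eq:taug} is where \eqref{eq:Q1} enters, in the rearranged form $(\Delta\otimes A\otimes A)(\Phi)\cdot(\Phi^{-1}\otimes 1)=(A\otimes A\otimes\Delta)(\Phi^{-1})\cdot(1\otimes\Phi)\cdot(A\otimes\Delta\otimes A)(\Phi)$, combined with two applications of \eqref{eq:P1} and one of \eqref{eq:P2}; as written, your outline gives no evidence that this Sweedler bookkeeping closes. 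Two smaller points: the second equality in \eqref{eq:taue} is not just the definition of $\blacktriangleright$ --- it requires \eqref{eq:tauc}, which in your ordering is proved afterwards (harmless, but it should be said); and the paper's route to the easy items is simpler than yours, proving \eqref{eq:tauc} directly from \eqref{eq:taua} together with $\varepsilon\circ S=\varepsilon$ (equation \eqref{eq:15}) and then getting \eqref{eq:taub} and \eqref{eq:taud} for free, instead of making $\tau^2=\tau$ depend on the hardest identity \eqref{eq:taug}.
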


\begin{proof}
Property \eqref{eq:taua} is quite easy to prove, indeed:
\begin{displaymath}
\tau(m\cdot a) = \Phi^1\cdot m_0\cdot a_1S(\Phi^2 m_1a_2)\Phi^3 \stackrel{\eqref{eq:P1}}{=}\Phi^1\cdot m_0\cdot \varepsilon(a)\,S(\Phi^2 m_1)\Phi^3 =\tau(m)\,\varepsilon(a).
\end{displaymath}
To prove $\eqref{eq:tauc}$ one uses $\eqref{eq:taua}$ to compute:
\begin{displaymath}
\begin{split}
\tau(a\cdot\tau(m)) & \stackrel{\phantom{(21)}}{=} \tau(a\Phi^1\cdot m_0\cdot S(\Phi^2m_1)\Phi^3) \stackrel{\eqref{eq:taua}}{=} \tau(a\Phi^1\cdot m_0)\, \varepsilon(S(\Phi^2m_1))\varepsilon(\Phi^3) \\
 & \stackrel{\eqref{eq:15}}{=} \tau(a\Phi^1\cdot m_0)\,\varepsilon(\Phi^2m_1)\varepsilon(\Phi^3) = \tau(a\cdot m_0)\varepsilon(m_1) =\tau(a\cdot m),
\end{split}
\end{displaymath}
for all $a\in A$ and $m\in M$. Now, $\eqref{eq:taub}$ is just $\eqref{eq:tauc}$ with $a=1$ and $\eqref{eq:taud}$ follows directly from $\eqref{eq:tauc}$ since for every $a,b\in A$ and $m\in M$
$$a\blacktriangleright(b\blacktriangleright m)=a\blacktriangleright \tau(b\cdot m)=\tau(ab\cdot m)=(ab)\blacktriangleright m.$$
Statement $\eqref{eq:taue}$ is a consequence of the quasi-coassociativity of $\Delta$:
\begin{displaymath}
\begin{split}
\tau(a_1\cdot m)\cdot a_2 & \stackrel{\phantom{(18)}}{=} \Phi^1(a_1)_1\cdot m_0\cdot S(\Phi^2(a_1)_2m_1)\Phi^3a_2 \\
 & \hspace{1.9pt}\stackrel{\eqref{eq:Q3}}{=}\hspace{1.9pt} a_1\Phi^1\cdot m_0\cdot S((a_2)_1\Phi^2m_1)(a_2)_2\Phi^3 \\
 & \stackrel{\eqref{eq:P2}}{=} a_1\Phi^1\cdot m_0\cdot S(\Phi^2m_1)\Phi^3\varepsilon(a_2) = a\cdot \tau(m)
\end{split}
\end{displaymath}
for all $a\in A$ and $m\in M$. Furthermore, $\eqref{eq:tauf}$ follows from the fact that for all $m\in M$
\begin{displaymath}
\begin{split}
\tau(m_0)\cdot m_1 & \stackrel{\phantom{(18)}}{=} \Phi^1\cdot (m_0)_0\cdot S(\Phi^2 (m_0)_1)\Phi^3 m_1 \stackrel{\eqref{eq:QH2}}{=} m_0\cdot \Phi^1S((m_1)_1\Phi^2) (m_1)_2\Phi^3 \\
 & \stackrel{\eqref{eq:P2}}{=} m_0\cdot \Phi^1S(\Phi^2)\varepsilon(m_1)\Phi^3 \stackrel{\eqref{eq:P3}}{=} m.
\end{split}
\end{displaymath}

Finally, let us verify $\eqref{eq:taug}$. \emph{In the calculations that follows we are going to indicate with $\Psi=\Psi^1\otimes \Psi^2\otimes \Psi^3$ another copy of $\Phi$.} For all $m\in M$
\begin{displaymath}
\begin{split}
\tau(\tau(m)_0) & \otimes \tau(m)_1 = \tau\left(\left(\Phi^1\cdot m_0\cdot S(\Phi^2 m_1)\Phi^3\right)_0\right)\otimes\left(\Phi^1\cdot m_0\cdot S(\Phi^2 m_1)\Phi^3\right)_1 \\
 & \stackrel{\eqref{eq:taua}}{=} \tau\left(\left(\Phi^1\cdot m_0\right)_0\right)\, \varepsilon\left(\left(S(\Phi^2 m_1)\Phi^3\right)_1\right)\otimes\left(\Phi^1\cdot m_0\right)_1\left( S(\Phi^2 m_1)\Phi^3\right)_2 \\
 & \stackrel{\phantom{(24a)}}{=} \tau\left((\Phi^1)_1\cdot (m_0)_0\right)\otimes(\Phi^1)_2 (m_0)_1S(\Phi^2 m_1)\Phi^3\\
 & \hspace{3.2pt}\stackrel{\eqref{eq:QH2}}{=} \tau\left((\Phi^1)_1\phi^1\cdot m_0\cdot \Psi^1\right)\otimes(\Phi^1)_2\phi^2 (m_1)_1\Psi^2S(\Phi^2\phi^3 (m_1)_2\Psi^3)\Phi^3\\
 & \stackrel{\eqref{eq:taua}}{=} \tau\left((\Phi^1)_1\phi^1\cdot m_0\right)\,\varepsilon\left(\Psi^1\right)\otimes(\Phi^1)_2\phi^2 (m_1)_1\Psi^2S(\Phi^2\phi^3 (m_1)_2\Psi^3)\Phi^3 \\
 & \hspace{3.2pt}\stackrel{\eqref{eq:Q2}}{=} \tau\left((\Phi^1)_1\phi^1\cdot m_0\right)\otimes(\Phi^1)_2\phi^2 (m_1)_1S(\Phi^2\phi^3 (m_1)_2)\Phi^3  \\
 & \hspace{1.6pt}\stackrel{\eqref{eq:P1}}{=} \tau\left((\Phi^1)_1\phi^1\cdot m_0\right)\otimes(\Phi^1)_2\phi^2 \varepsilon(m_1)S(\Phi^2\phi^3)\Phi^3  \\
 & \hspace{3.2pt}\stackrel{(*)}{=} \tau\left(\phi^1\Psi^1\cdot m\right)\otimes\phi^2\Phi^1\left(\Psi^2\right)_1 S\left(\left(\phi^3\right)_1\Phi^2\left(\Psi^2\right)_2\right)\left(\phi^3\right)_2\Phi^3\Psi^3 \\
 & \hspace{1.6pt}\stackrel{\eqref{eq:P1}}{=} \tau\left(\phi^1\Psi^1\cdot m\right)\otimes\phi^2\Phi^1\,\varepsilon\left(\Psi^2\right) S\left(\left(\phi^3\right)_1\Phi^2\right)\left(\phi^3\right)_2\Phi^3\Psi^3 \\
 & \hspace{1.6pt}\stackrel{\eqref{eq:P2}}{=} \tau\left(\phi^1\cdot m\right)\otimes\phi^2\Phi^1 S\left(\Phi^2\right)\,\varepsilon\left(\phi^3\right)\Phi^3 \\
 & \hspace{3.2pt}\stackrel{\eqref{eq:Q2}}{=} \tau(m)\otimes\Phi^1 S\left(\Phi^2\right)\Phi^3 \stackrel{\eqref{eq:P3}}{=}\tau(m)\otimes 1
\end{split}
\end{displaymath}
where in $(*)$ we used \eqref{eq:Q1}:
$(\Delta\otimes A\otimes A)(\Phi)\cdot(\Phi^{-1}\otimes 1)=(A\otimes A\otimes \Delta)(\Phi^{-1})\cdot(1\otimes \Phi)\cdot(A\otimes\Delta\otimes A)(\Psi).$
\end{proof}

\begin{remark}
Actually, it can be proven that $\eqref{eq:taub}$, $\eqref{eq:tauc}$, $\eqref{eq:taud}$ and $\eqref{eq:taue}$ are consequences of $\eqref{eq:taua}$, $\eqref{eq:tauf}$ and $\eqref{eq:taug}$. Indeed, we already know that $\eqref{eq:taub}$ and $\eqref{eq:taud}$ follows directly from $\eqref{eq:tauc}$. Moreover, $\eqref{eq:taua}$ and $\eqref{eq:tauf}$ together imply that for all $a\in A$, $m\in M$,
$$\tau(a\cdot m)\stackrel{\eqref{eq:tauf}}{=}\tau(a\cdot \tau(m_0)\cdot m_1)\stackrel{\eqref{eq:taua}}{=}\tau(a\cdot\tau(m_0))\varepsilon(m_1)=\tau(a\cdot\tau(m)).$$
Finally, from $\eqref{eq:tauc}$ we deduce that:
\begin{equation}\label{eq:18}
\tau(a_1\cdot m)\cdot a_2\stackrel{\eqref{eq:tauc}}{=}\tau(a_1\cdot \tau(m))\cdot a_2
\end{equation}
for every $m\in M$, $a\in A$ and this, together with \eqref{eq:taug} and \eqref{eq:tauf}, implies that
\begin{displaymath}
\tau(a_1\cdot \tau(m))\cdot a_2 \stackrel{\eqref{eq:taug}}{=}\tau(a_1\cdot \tau(\tau(m)_0))\cdot a_2\tau(m)_1 \stackrel{\eqref{eq:18}}{=} \tau(a_1\cdot \tau(m)_0)\cdot a_2\tau(m)_1 \stackrel{\eqref{eq:tauf}}{=} a\cdot \tau(m).
\end{displaymath}
\end{remark}

\begin{proposition}\label{prop:preantipode}
Let $(A,m,u,\Delta,\varepsilon,\Phi)$ be a quasi-bialgebra. Then for all $M\in \quasihopf{A}$, any $\K$-linear map $\tau\colon M\to M$ satisfying \eqref{eq:taua}, \eqref{eq:tauf} and \eqref{eq:taug} of Proposition \ref{proptau} induces a map
\begin{equation}\label{eq:36}
\sfun{\tilde{\tau}}{\frac{M}{MA^+}}{M}{\cl{m}}{\tau(m)}
\end{equation}
satisfying conditions \eqref{eq:9} - \eqref{eq:10} from Theorem \ref{Th2.4}. Consequently, if such a map $\tau$ exists for all $M\in \quasihopf{A}$, then the adjunction $(F,G,\eta,\varepsilon)$ is an equivalence of categories.
\end{proposition}

\begin{proof}
Note that $\tau$ actually factors through the quotient $\frac{M}{MA^+}$, since it satisfies $\tau(m\cdot a)=\tau(m)\,\varepsilon(a)$ (cf. $\eqref{eq:taua}$). Thus the map $\tilde{\tau}$ of \eqref{eq:36} is a well-defined $\K$-linear map.
Keeping in mind Proposition \ref{proptau}, let us show that this $\tilde{\tau}$ satisfies \eqref{eq:9} and \eqref{eq:10}:
\begin{itemize}
\item For all $m\in M$
$$\tilde{\tau}(\cl{m_0})\cdot m_1=\tau(m_0)\cdot m_1\stackrel{\eqref{eq:tauf}}{=}m.$$
\item From $\eqref{eq:tauf}$ we deduce that in the quotient $\frac{M}{MA^+}$ the following relation holds
\begin{equation}\label{eq:20}
\cl{m}=\cl{\tau(m_0)\cdot m_1}=\cl{\tau(m)}
\end{equation}
for all $m\in M$. Consequently, from the identity $\tilde{\tau}(\cl{m})=\tau(m)$ together with $\eqref{eq:taug}$ and \eqref{eq:20}, we find out that:
\begin{displaymath}
\cl{\tilde{\tau}(\cl{m})_0}\otimes \tilde{\tau}(\cl{m})_1 = \cl{\tau(m)_0}\otimes \tau(m)_1 \stackrel{\eqref{eq:20}}{=} \cl{\tau(\tau(m)_0)}\otimes \tau(m)_1 \stackrel{\eqref{eq:taug}}{=} \cl{\tau(m)}\otimes 1 \stackrel{\eqref{eq:20}}{=} \cl{m}\otimes 1.
\end{displaymath}
\end{itemize}
This completes the proof.
\end{proof}

As a consequence of Proposition \ref{prop:preantipode} and Proposition \ref{proptau} we have the following central theorem.

\begin{theorem}\label{preantipode}
Let $(A,m,u,\Delta,\varepsilon,\Phi)$ be a quasi-bialgebra and $S$ be a preantipode for $A$. Then for all $M\in\quasihopf{A}$, the map
\begin{equation}\label{eq:tautilde}
\tilde{\tau}:\frac{M}{MA^+}\longrightarrow M:\cl{m}\longmapsto \Phi^1\cdot m_0\cdot S(\Phi^2 m_1)\Phi^3
\end{equation}
is $\K$-linear and satisfies conditions \eqref{eq:9} - \eqref{eq:10} from Theorem \ref{Th2.4}, so that the adjunction $(F,G,\eta,\varepsilon)$ is an equivalence of categories.
\end{theorem}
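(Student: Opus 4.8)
The plan is to obtain this statement as an immediate corollary of the two preceding propositions, with essentially no new computation required. Observe that the map $\tilde{\tau}$ displayed in \eqref{eq:tautilde} is precisely the map constructed in \eqref{eq:36} of Proposition \ref{prop:preantipode}, applied to the particular $\K$-linear endomorphism $\tau\colon M\to M$, $m\mapsto \Phi^1\cdot m_0\cdot S(\Phi^2 m_1)\Phi^3$ introduced in \eqref{eq:tau} of Proposition \ref{proptau}. So the whole argument amounts to checking that the hypotheses of Proposition \ref{prop:preantipode} are met by this specific $\tau$.

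First I would record that $\tilde{\tau}$ is well defined and $\K$-linear: the formula for $\tau$ is a composite of the (right) coaction $\rho_M$, the $\K$-linear map $S$, the structure multiplications of $A$, and the left and right $A$-actions on $M$, all of which are $\K$-linear (resp. $\K$-bilinear), so $\tau$ is $\K$-linear; and by property \eqref{eq:taua} of Proposition \ref{proptau} we have $\tau(m\cdot a)=\tau(m)\,\varepsilon(a)$, so $\tau$ kills $MA^+$ and therefore factors through the quotient $M/MA^+$, yielding the $\K$-linear map $\tilde{\tau}$ of \eqref{eq:tautilde}.

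Next, since $S$ is by assumption a preantipode for $A$, Proposition \ref{proptau} guarantees that this $\tau$ satisfies \eqref{eq:taua}, \eqref{eq:tauf} and \eqref{eq:taug} (these are exactly three of the seven properties listed there). These are precisely the three hypotheses required in Proposition \ref{prop:preantipode}. Applying that proposition to our $\tau$ then gives directly that the induced map $\tilde{\tau}=\bigl(\cl{m}\mapsto\tau(m)\bigr)$ satisfies assertions $(i)$ and $(ii)$ of Theorem \ref{Th2.4}, which is the claim.

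There is no genuine obstacle at this level: all the technical weight has already been absorbed into Proposition \ref{proptau} (the proof of \eqref{eq:taug}, which uses the pentagon-type identity \eqref{eq:Q1} together with both defining relations \eqref{eq:P1}--\eqref{eq:P2} of the preantipode, is the delicate part) and into the bookkeeping of Proposition \ref{prop:preantipode}. Hence the proof of Theorem \ref{preantipode} itself should be a one- or two-line invocation: combine Proposition \ref{proptau} and Proposition \ref{prop:preantipode}.
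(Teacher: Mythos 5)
Your proposal is correct and follows exactly the paper's route: the paper gives no separate proof of Theorem \ref{preantipode}, introducing it only with the sentence that it is ``a consequence of Proposition \ref{prop:preantipode} and Proposition \ref{proptau}'', which is precisely your one-line combination of the two (Proposition \ref{proptau} supplies \eqref{eq:taua}, \eqref{eq:tauf}, \eqref{eq:taug} for the map $\tau$ of \eqref{eq:tau}, and Proposition \ref{prop:preantipode} then yields the well-defined $\K$-linear $\tilde{\tau}$ satisfying $(i)$ and $(ii)$ of Theorem \ref{Th2.4}). Your additional remarks on well-definedness via \eqref{eq:taua} are already contained in the proof of Proposition \ref{prop:preantipode}, so nothing new is needed.
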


Next aim is to show that if the adjunction $(F,G,\eta,\epsilon)$ is an equivalence, then we can construct a map $S$ that satisfies the conditions \eqref{eq:P1}, \eqref{eq:P2} and \eqref{eq:P3} of Definition \ref{defpreantipode}.

Therefore, consider the quasi-Hopf bimodule $A\otimes A$ with the following structures:
$$A\hat{\otimes}A:=T({}_\circ A{}_\bullet)={}_\circ A{}_\bullet\otimes {}_\bullet^{} A{}_\bullet^\bullet$$
where the tensor product is taken in ${}_A\M{}_A$. Explicitly:
\begin{subequations}
\begin{gather}
x\cdot(a\otimes b)=a\otimes xb, \qquad (a\otimes b)\cdot x=ax_1\otimes bx_2 \label{eq:TAstructa},\\
\rho(a\otimes b)=((a\otimes b_1)\otimes b_2)\cdot \Phi=a\Phi^1\otimes b_1\Phi^2\otimes b_2\Phi^3 \label{eq:TAstructb}
\end{gather}
\end{subequations}
for all $a,b,x\in A$ (recall relations \eqref{eq:8} and \eqref{eq:Q2}). Set 
\begin{equation}\label{eq:etacap}
\sfun{\hat{\eta}_A:=\eta_{A\hat{\otimes}A}}{A\hat{\otimes }A}{\displaystyle\frac{A\hat{\otimes }A}{(A\hat{\otimes }A)A^+}\otimes A}{a\otimes b}{\cl{a\Phi^1\otimes b_1\Phi^2}\otimes b_2\Phi^3}.
\end{equation}
The structures on $\frac{A\hat{\otimes }A}{(A\hat{\otimes }A)A^+}\otimes A$ are given by:
\begin{gather*}
x\cdot(\cl{a\otimes b}\otimes c)=\cl{a\otimes x_1b}\otimes x_2c, \\
(\cl{a\otimes b}\otimes c)\cdot x=\cl{a\otimes b}\otimes cx, \\
\rho(\cl{a\otimes b}\otimes c)=(\cl{a\otimes \phi^1b}\otimes \phi^2c_1)\otimes \phi^3c_2,
\end{gather*}
for all $a,b,c,x\in A$.
If we assume that $(F,G)$ is an equivalence, then $\hat{\eta}_A$ is an isomorphism in $\quasihopf{A}$. This means that $\hat{\eta}_A^{-1}$ exists and it is an isomorphism too.
Note that since it is right $A$-linear, i.e.:
\begin{equation}\label{eq:23}
\hat{\eta}_A^{-1}(\cl{a\otimes b}\otimes c)=\hat{\eta}_A^{-1}(\cl{a\otimes b}\otimes 1)\cdot c,
\end{equation}
it is completely determined by its value on elements of the form $\cl{a\otimes b}\otimes 1$. Set
\begin{equation}\label{eq:24}
a^1\otimes a^2:=\hat{\eta}_A^{-1}(\cl{1\otimes a}\otimes 1)
\end{equation}
(summation understood) and define a new map:
\begin{equation}\label{eq:xi}
\xi:\displaystyle\frac{A\hat{\otimes }A}{(A\hat{\otimes }A)A^+}\longrightarrow A:\cl{a\otimes b}\longmapsto (A\otimes \varepsilon)\hat{\eta}_A^{-1}(\cl{a\otimes b}\otimes 1).
\end{equation}
\emph{We will see at the end of Section \ref{section5} that $\xi$ plays a central role in reconstructing the quasi-antipode from the preantipode.}

Consider the left $A$-action on $A\otimes A$ given by the multiplication on the first factor. The subset $(A\otimes A)A^+$ is still a left $A$-submodule of $A\otimes A$ and so this action passes to the quotient $\frac{A\otimes A}{(A\otimes A)A^+}$. It is not hard to see that $\hat{\eta}_A$ in left $A$-linear with respect to the multiplication on the first factor.
This implies that also $\hat{\eta}_A^{-1}$ is. In particular, for all $a,b\in A$
\begin{gather}
\hat{\eta}_A^{-1}(\cl{a\otimes b}\otimes 1)=ab^1\otimes b^2, \label{eq:26} \\
\xi(\cl{a\otimes b})=ab^1\,\varepsilon(b^2).
\end{gather}

Next observe that, in view of \eqref{eq:23}, we can write:
\begin{equation}\label{eq:etainv0}
\hat{\eta}_A^{-1}(\cl{a\otimes b}\otimes c)\stackrel{\eqref{eq:26}}{=}(ab^1\otimes b^2)\cdot c=ab^1c_1\otimes b^2c_2.
\end{equation}

Define
\begin{equation}\label{eq:defS}
S(a):=a^1\,\varepsilon(a^2)=(A\otimes \varepsilon)\left(\hat{\eta}_A^{-1}(\cl{1\otimes a}\otimes 1)\right)
\end{equation}
for all $a\in A$ and let us show that this is a preantipode for $A$.
\begin{itemize}
\item First of all, $\varfun{S}{A}{A}$ is clearly $\K$-linear. Moreover 
\begin{equation}\label{eq:xiS}
\xi(\cl{a\otimes b})=aS(b).
\end{equation}
\item For every $\cl{a\otimes b}\in\frac{A\otimes A}{(A\otimes A)A^+}$ we have that:
$$\cl{ax_1\otimes bx_2}=\cl{(a\otimes b)\cdot x}=\cl{a\otimes b}\,\varepsilon(x).$$
This implies that:
$$ax_1S(bx_2)=\xi(\cl{ax_1\otimes bx_2})=\xi(\cl{a\otimes b})\,\varepsilon(x)=aS(b)\,\varepsilon(x)$$
and, evaluating it at $a=1$, we get \eqref{eq:P1}: $x_1S(bx_2)=\varepsilon(x)S(b)$.
\item Recall that $\hat{\eta}_A^{-1}$ is left $A$-linear with respect to the original left $A$-action too. Hence:
\begin{displaymath}
\begin{split}
a^1\otimes xa^2 & =x\cdot(a^1\otimes a^2)=x\cdot \hat{\eta}_A^{-1}(\cl{1\otimes a}\otimes 1)=\hat{\eta}_A^{-1}(x\cdot(\cl{1\otimes a}\otimes 1))\\
 & =\hat{\eta}_A^{-1}(\cl{1\otimes x_1a}\otimes x_2)\stackrel{\eqref{eq:etainv0}}{=}(x_1a)^1(x_2)_1\otimes (x_1a)^2(x_2)_2.
\end{split}
\end{displaymath}
Applying $A\otimes \varepsilon$ on both sides we find \eqref{eq:P2}: $\varepsilon(x)\,S(a)=S(x_1a)x_2.$
\item Finally, $\hat{\eta}_A^{-1}$ is the inverse of $\hat{\eta}_A$. Consequently
\begin{displaymath}
\begin{split}
a\otimes b & \stackrel{\phantom{(33)}}{=} \hat{\eta}_A^{-1}(\hat{\eta}_A(a\otimes b))=\hat{\eta}_A^{-1}(\cl{a\Phi^1\otimes b_1\Phi^2}\otimes b_2\Phi^3) \\
 & \stackrel{\eqref{eq:etainv0}}{=} a\Phi^1(b_1\Phi^2)^1(b_2\Phi^3)_1\otimes (b_1\Phi^2)^2(b_2\Phi^3)_2
\end{split}
\end{displaymath}
and applying $A\otimes \varepsilon$ on both sides again we obtain:
$$a\,\varepsilon(b)=a\Phi^1S\left(b_1\Phi^2\right)b_2\Phi^3.$$
For $a=b=1$ we get \eqref{eq:P3}: $1=\Phi^1S(\Phi^2)\Phi^3.$
\end{itemize}

\begin{remark}
Observe that we can use the right $A$-colinearity of $\hat{\eta}_A^{-1}$ to express it explicitly as a function of $S$. Indeed, from the commutativity of the following diagram:
\begin{equation}\label{eq:etacolin}
\xymatrix{ \ar@{}[drr]|{\circlearrowleft}
\cl{A\hat{\otimes}A}\otimes A \ar@{|->}[rr]^{\hat{\eta}_A^{-1}} \ar@{|->}[d]_-{\rho_{\cl{A\hat{\otimes}A}\otimes A}} & & A\hat{\otimes}A \ar@{|->}[d]^-{\rho_{A\hat{\otimes}A}} \\
\cl{A\hat{\otimes}A}\otimes A \otimes A \ar@{|->}[rr]_-{\hat{\eta}_A^{-1}\otimes A} & & A\hat{\otimes}A\otimes A
}
\end{equation}
one deduces that:
\begin{displaymath}
\begin{split}
\left(\phi^1b\right)^1\left(\phi^2\right)_1\otimes\left(\phi^1b\right)^2\left(\phi^2\right)_2\otimes\phi^3 & \hspace{1.9pt}\stackrel{\eqref{eq:etainv0}}{=}\hspace{1.9pt} \hat{\eta}_A^{-1}\left(\cl{1\otimes \phi^1b}\otimes \phi^2\right)\otimes \phi^3 \\
 & \hspace{1.9pt}\stackrel{\eqref{eq:etacolin}}{=}\hspace{1.9pt} \rho_{A\hat{\otimes}A}\left(\hat{\eta}_A^{-1}(\cl{1\otimes b}\otimes 1)\right)  \\
 & \hspace{1.9pt}\stackrel{\eqref{eq:24}}{=}\hspace{1.9pt} \rho_{A\hat{\otimes}A}\left(b^1\otimes b^2\right) \\
 & \stackrel{\eqref{eq:TAstructb}}{=} \left(b^1\Phi^1\otimes (b^2)_1\Phi^2\right)\otimes (b^2)_2\Phi^3.	
\end{split}
\end{displaymath}
Applying $A\otimes \varepsilon\otimes A$ to the leftmost member and to the rightmost one and in view of relations \eqref{eq:Q2}, \eqref{eq:23} and \eqref{eq:defS} we find:
\begin{equation*}
S(\phi^1b)\phi^2\otimes \phi^3=b^1\otimes b^2=\hat{\eta}_A^{-1}(\cl{1\otimes b}\otimes 1),
\end{equation*}
so that from \eqref{eq:etainv0} we can conclude that:
\begin{equation}\label{eq:30}
\hat{\eta}_A^{-1}(\cl{a\otimes b}\otimes c)=aS(\phi^1b)\phi^2c_1\otimes \phi^3c_2.
\end{equation}
\end{remark}

\begin{theorem}\label{FundStructTheo}\emph{(Structure Theorem for quasi-Hopf bimodules)}
For a quasi-bialgebra $(A,m,u,\Delta,\varepsilon,\Phi)$ the following assertions are equivalent:
\begin{itemize}
\item[(1)] the adjunction $(F,G,\eta,\epsilon)$ is an equivalence of categories;
\item[(2)] $\hat{\eta}_A$ is bijective;
\item[(3)] there exists a preantipode;
\item[(4)] for all $M\in\quasihopf{A}$, there is a linear map $\varfun{\tau}{M}{M}$ that satisfies \eqref{eq:taua}, \eqref{eq:tauf} and \eqref{eq:taug}.
\end{itemize}
\end{theorem}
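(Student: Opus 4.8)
The plan is to establish the cyclic chain of implications $(3)\Rightarrow(4)\Rightarrow(1)\Rightarrow(2)\Rightarrow(3)$, since most of the technical work has already been isolated in the results leading up to the statement.

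The implication $(3)\Rightarrow(4)$ is immediate: given a preantipode $S$, Proposition \ref{proptau} asserts precisely that the map $\tau$ of \eqref{eq:tau}, which is defined on \emph{every} $M\in\quasihopf{A}$, satisfies \eqref{eq:taua}, \eqref{eq:tauf} and \eqref{eq:taug} (among others). For $(4)\Rightarrow(1)$, assume such a $\tau$ is available for every quasi-Hopf bimodule $M$; Proposition \ref{prop:preantipode} then turns it into a $\K$-linear map $\tilde{\tau}_M\colon\overline{M}\to M$ verifying conditions $(i)$ and $(ii)$ of Theorem \ref{Th2.4}, and the implication $(ii)\Rightarrow(i)$ of that theorem gives that $(F,G,\eta,\epsilon)$ is an equivalence. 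Observe that Theorem \ref{Th2.4}$(ii)$ only requires the existence of the map object by object, with no naturality condition, so nothing further needs to be verified.

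The implication $(1)\Rightarrow(2)$ is formal: by its very definition \eqref{eq:etacap}, $\hat{\eta}_A$ is the component $\eta_{A\hat{\otimes}A}$ of the unit $\eta$, so if $(F,G,\eta,\epsilon)$ is an equivalence then $\eta$ is a natural isomorphism and $\hat{\eta}_A$ is in particular bijective. The substantive content is the remaining implication $(2)\Rightarrow(3)$, which reconstructs the whole theory from the single test object $A\hat{\otimes}A$, and which is exactly the construction carried out in the paragraphs preceding the statement. The key observations are that a bijective morphism of quasi-Hopf bimodules has an inverse which is again a morphism of quasi-Hopf bimodules, so that $\hat{\eta}_A^{-1}$ is left and right $A$-linear and right $A$-colinear, and that $A\otimes A$ moreover carries a \emph{second} left $A$-action --- multiplication on the first tensor factor --- which is also respected by $\hat{\eta}_A$ and hence by $\hat{\eta}_A^{-1}$. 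One then defines $S$ by \eqref{eq:defS} and deduces \eqref{eq:P1}, \eqref{eq:P2} and \eqref{eq:P3} by applying $A\otimes\varepsilon$ to suitable elements: \eqref{eq:P1} and \eqref{eq:P2} come out of the various $A$-linearity properties of $\hat{\eta}_A^{-1}$, while \eqref{eq:P3} is just $\hat{\eta}_A^{-1}\circ\hat{\eta}_A=\id$ read off in the same way.

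The only genuine obstacle, and the one point where real computation is needed, is this last step: guessing the correct quasi-Hopf bimodule $A\hat{\otimes}A$ on which to run the argument, recognizing the two compatible left $A$-actions on $A\otimes A$, and extracting the three defining identities of a preantipode from the appropriate (co)linearity statements with the help of the reassociator axioms \eqref{eq:Q1}, \eqref{eq:Q2} and \eqref{eq:Q3}. Once the formula for $S$ and these remarks are in place, the remaining verifications are routine bookkeeping in Sweedler notation, and the theorem follows by concatenating the four implications.
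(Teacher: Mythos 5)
Your proposal is correct and follows essentially the same route as the paper: the identical cyclic chain of implications, with $(3)\Rightarrow(4)$ from Proposition \ref{proptau}, $(4)\Rightarrow(1)$ from Proposition \ref{prop:preantipode} together with Theorem \ref{Th2.4}, $(1)\Rightarrow(2)$ from $\hat{\eta}_A=\eta_{A\hat{\otimes}A}$, and $(2)\Rightarrow(3)$ via the construction of $S$ in \eqref{eq:defS} carried out in the paragraphs preceding the statement. Your account of how \eqref{eq:P1}, \eqref{eq:P2} and \eqref{eq:P3} are extracted from the $A$-linearity properties of $\hat{\eta}_A^{-1}$ and from $\hat{\eta}_A^{-1}\circ\hat{\eta}_A=\id$ matches the paper's argument.
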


\begin{proof}
$(1)\Rightarrow (2)$. It follows from the fact that $\hat{\eta}_A=\eta_{A\hat{\otimes }A}$.\\
$(2)\Rightarrow (3)$. We proved this in the paragraph that follows Theorem \ref{preantipode}.\\
$(3)\Rightarrow (4)$. It follows from Proposition \ref{proptau}. \\
$(4)\Rightarrow (1)$. It follows from Theorem \ref{Th2.4} and Proposition \ref{prop:preantipode}.
\end{proof}

\subsection{The space of coinvariant elements of a quasi-Hopf bimodule}\label{section4.2}

A careful observer can object that, if $M$ is an Hopf module, then the ordinary Structure Theorem involves the space $\coinv{M}{A}$ and not a quotient $\frac{M}{MA^+}$. Actually, there exists a suitable extension of the notion of coinvariant elements of a quasi-Hopf bimodule such that they are isomorphic.

The results that follow have been proven for quasi-Hopf algebras (that we will introduce later) by Hausser and Nill in \cite{HN}. Here we generalize these results to the framework of quasi-bialgebras with preantipode and in the next section we will show how the original ones can be recovered from the new ones.

Let $(A,m,u,\Delta,\varepsilon,\Phi,S)$ be a quasi-bialgebra with preantipode $S$, let $M$ be a quasi-Hopf bimodule and consider the linear transformation $\varfun{\tau}{M}{M}$ of \eqref{eq:tau}. The definition below is the analogue of \cite[Definition 3.5]{HN} (see also \cite[introduction to Section 3, p. 566]{BC}).

\begin{definition}\label{def:coinv}
The \textbf{space of coinvariant elements} (or just the \emph{space of coinvariants}, for the sake of brevity) of a quasi-Hopf $A$-bimodule $M$ is defined to be $$\coinv{M}{A}:=\tau(M).$$
\end{definition}

Before showing that this definition is exactly what we need to formulate the Structure Theorem in terms of $\coinv{M}{A}$, let us retrieve a proposition that collects some characterizations of the space of coinvariants (cf. also \cite[Definition 3.5]{HN} and \cite[Corollary 3.9]{HN}). The proof is left to the reader.

\begin{proposition}
If $A$ is a quasi-bialgebra with preantipode $S$ and $M$ is a quasi-Hopf $A$-bimodule, then the following descriptions of $\coinv{M}{A}$ hold, where $\phi^1\otimes \phi^2\otimes\phi^3=\Phi^{-1}$:
\begin{equation}\label{eq:coinvariants}
\begin{split}
\coinv{M}{A} & =\left\{n\in M\mid \tau(n)=n\right\} \\
 & =\left\{n\in M\mid \tau(n_0)\otimes n_1=\tau(n)\otimes 1\right\} \\
 & =\left\{n\in M\mid \rho_M(n)=\tau(\phi^1\cdot n)\cdot \phi^2\otimes \phi^3\right\}.
\end{split}
\end{equation}

\end{proposition}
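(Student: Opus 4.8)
The plan is to verify the three equalities in \eqref{eq:coinvariants} by a standard round of double inclusions, exploiting only the properties \eqref{eq:taua}--\eqref{eq:taug} of $\tau$ collected in Proposition \ref{proptau}. First I would dispose of the first description. The inclusion $\tau(M)\subseteq\{n\mid \tau(n)=n\}$ is immediate from idempotency \eqref{eq:taub}: if $n=\tau(m)$ then $\tau(n)=\tau^2(m)=\tau(m)=n$. The reverse inclusion is a triviality: any $n$ with $\tau(n)=n$ is visibly in the image of $\tau$. So the first line of \eqref{eq:coinvariants} is essentially free.

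Next I would handle the equivalence with the second description, $\{n\mid \tau(n_0)\otimes n_1=\tau(n)\otimes 1\}$. If $\tau(n)=n$, then $\tau(n_0)\otimes n_1=\tau(\tau(n)_0)\otimes \tau(n)_1$, which equals $\tau(n)\otimes 1=n\otimes 1$ by \eqref{eq:taug}; this gives one inclusion. Conversely, suppose $\tau(n_0)\otimes n_1=\tau(n)\otimes 1$. Applying the right $A$-action $M\otimes A\to M$ (i.e. multiplying the two tensor factors) to both sides yields $\tau(n_0)\cdot n_1=\tau(n)\cdot 1=\tau(n)$; but the left-hand side is exactly $n$ by \eqref{eq:tauf}. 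Hence $n=\tau(n)$, closing the loop with the first set.

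For the third description, $\{n\mid \rho_M(n)=\tau(\phi^1\cdot n)\cdot\phi^2\otimes\phi^3\}$ with $\Phi^{-1}=\phi^1\otimes\phi^2\otimes\phi^3$, the natural route is to show it coincides with the second one. Starting from $n$ with $\tau(n)=n$ (equivalently $\tau(n_0)\otimes n_1=\tau(n)\otimes 1$), one computes $\tau(\phi^1\cdot n)\cdot\phi^2\otimes\phi^3$ and must recover $\rho_M(n)=n_0\otimes n_1$. The idea is to rewrite $\tau(\phi^1\cdot n)$ via \eqref{eq:tauc} as $\phi^1\blacktriangleright\tau(n)=\phi^1\blacktriangleright n$ and then use \eqref{eq:taue} in the form $a\cdot\tau(m)=\tau(a_1\cdot m)\cdot a_2$ together with the coaction compatibility \eqref{eq:QH2} and the reassociator relations \eqref{eq:Q1}--\eqref{eq:Q2}, so that the $\phi$'s cancel against the $\Phi$'s hidden inside $\tau$ and $\rho_M$; the defining relation $\tau(n)=n$ is what makes the bookkeeping close. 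For the converse, one applies $M\otimes\varepsilon\otimes A$ (or uses counitality \eqref{eq:Q2}) to the equation $\rho_M(n)=\tau(\phi^1\cdot n)\cdot\phi^2\otimes\phi^3$ and, invoking the coassociativity-type identity \eqref{eq:QH2} for the coaction and \eqref{eq:taua}, extracts $\tau(n_0)\otimes n_1=\tau(n)\otimes 1$, landing back in the second set.

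The only genuinely delicate point is the third description: unlike the first two, it requires matching a $\Phi^{-1}$ appearing \emph{outside} $\tau$ with the $\Phi$'s that are built into the very definition \eqref{eq:tau} of $\tau$ and into the coaction axiom \eqref{eq:QH2}, which means a careful (if routine) juggling of \eqref{eq:Q1}, \eqref{eq:Q3}, \eqref{eq:QH2}, \eqref{eq:taue} and the preantipode axioms — precisely the kind of computation already carried out in the proof of \eqref{eq:taug}. Since the paper explicitly says ``the proof is left to the reader,'' I would present the first two descriptions in full as above and merely indicate that the third follows by the same techniques, rewriting $\tau(\phi^1\cdot n)\cdot\phi^2\otimes\phi^3$ using \eqref{eq:tauc} and \eqref{eq:taue} and then applying \eqref{eq:QH2} and \eqref{eq:Q2} to reduce to the condition $\tau(n_0)\otimes n_1=\tau(n)\otimes 1$.
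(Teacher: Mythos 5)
Your handling of the first two descriptions is complete and correct, and since the paper leaves this proof to the reader there is no official argument to compare with; the question is whether your own argument closes, and for the third description it does not quite. The easy inclusion is fine in substance but misstated: the equation $\rho_M(n)=\tau(\phi^1\cdot n)\cdot\phi^2\otimes\phi^3$ lives in $M\otimes A$, so there is no map ``$M\otimes\varepsilon\otimes A$'' to apply; what you want is $M\otimes\varepsilon$, which by \eqref{eq:QH1} on the left and $(A\otimes A\otimes\varepsilon)(\Phi^{-1})=1\otimes 1$ (a consequence of \eqref{eq:Q2}, $\varepsilon$ being an algebra map) on the right gives $n=\tau(n)$ directly, landing in the first set with no need of \eqref{eq:QH2} or \eqref{eq:taua}.

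The genuine gap is the reverse inclusion $\left\{n\mid\tau(n)=n\right\}\subseteq\left\{n\mid\rho_M(n)=\tau(\phi^1\cdot n)\cdot\phi^2\otimes\phi^3\right\}$, which you leave as ``routine juggling'' via \eqref{eq:tauc} and \eqref{eq:taue}. As stated that recipe cannot work: \eqref{eq:taue} requires the first two legs to form a coproduct $\Delta(a)$, whereas here they are the first two legs of $\Phi^{-1}$, so there is nothing for \eqref{eq:taue} to act on. The computation does close, but along a different line. First, for \emph{every} $n\in M$ one has
\begin{equation*}
n_0\otimes n_1=\tau(\phi^1\cdot n_0)\cdot\phi^2 (n_1)_1\otimes\phi^3 (n_1)_2,
\end{equation*}
obtained by writing $n_0=\tau((n_0)_0)\cdot(n_0)_1$ via \eqref{eq:tauf}, rewriting $(n_0)_0\otimes(n_0)_1\otimes n_1$ with \eqref{eq:QH2}, and absorbing the extra copy of $\Phi$ with \eqref{eq:taua} and \eqref{eq:Q2}. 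Since $\tau(\phi^1\cdot n_0)=\tau(\phi^1\cdot\tau(n_0))$ (the identity $\tau(a\cdot m)=\tau(a\cdot\tau(m))$ noted in the remark after Proposition \ref{proptau}), the right-hand side is the image of $\tau(n_0)\otimes n_1$ under the well-defined linear map $x\otimes a\mapsto\tau(\phi^1\cdot x)\cdot\phi^2 a_1\otimes\phi^3 a_2$; so if $n$ lies in the second set, i.e. $\tau(n_0)\otimes n_1=\tau(n)\otimes 1=n\otimes 1$, the expression collapses to $\tau(\phi^1\cdot n)\cdot\phi^2\otimes\phi^3$, as required. Alternatively, and more in the spirit of the paper, by Theorems \ref{Th2.4} and \ref{preantipode} the map $\eta_M^{-1}(\cl{m}\otimes a)=\tau(m)\cdot a$ is a morphism of quasi-Hopf bimodules, and its right $A$-colinearity evaluated at $\cl{n}\otimes 1$, with the coaction of \eqref{eq:7} on $\overline{M}\otimes A$, gives $\rho_M(\tau(n))=\tau(\phi^1\cdot n)\cdot\phi^2\otimes\phi^3$ at once, which is the desired identity when $\tau(n)=n$. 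Either of these arguments should replace your sketch.
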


\begin{remark}
Note that if $n\in M$ is such that $\rho_M(n)=n\otimes 1$, then $n\in\coinv{M}{A}$. However, up to this moment, we found no evidence that the converse is true or not, in general.
\end{remark}

Next lemma is the analogue of \cite[Lemma 3.6]{HN} for a quasi-bialgebra $A$ with preantipode $S$.

\begin{lemma}
Let $M$ be a left $A$-module. Then the coinvariants of the quasi-Hopf bimodule $G(M)={}_\bullet^{}M{}_\circ^{}\otimes {}_\bullet^{}A{}_\bullet^\bullet$ are given by $\coinv{(M\otimes A)}{A}=M\otimes \K,$ and for $m\in M$ and $a\in A$ we have that $\tau(m\otimes a)=m\otimes \varepsilon(a)$.
\end{lemma}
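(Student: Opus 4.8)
The plan is to compute $\tau$ directly on the quasi-Hopf bimodule $G(M)={}_\bullet^{}M{}_\circ^{}\otimes {}_\bullet^{}A{}_\bullet^\bullet$ using the explicit formula \eqref{eq:tau} and the explicit structures \eqref{eq:7}. Recall that for $n\in M$, $a\in A$ the coaction is $\rho(n\otimes a)=\Phi^{-1}\cdot((n\otimes a_1)\otimes a_2)$, the right $A$-action on the first-tensorand is trivial, $(n\otimes a)\cdot x = n\otimes ax$, and the left action is diagonal, $x\cdot(n\otimes a)=x_1\cdot n\otimes x_2 a$. Writing $\Phi^{-1}=\phi^1\otimes\phi^2\otimes\phi^3$, we thus have $\rho(n\otimes a)=(\phi^1\cdot n\otimes\phi^2 a_1)\otimes\phi^3 a_2$, so $(n\otimes a)_0=\phi^1\cdot n\otimes\phi^2 a_1$ and $(n\otimes a)_1=\phi^3 a_2$.

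First I would substitute these into \eqref{eq:tau}:
\begin{displaymath}
\begin{split}
\tau(n\otimes a) & = \Phi^1\cdot(n\otimes a)_0\cdot S\big(\Phi^2(n\otimes a)_1\big)\Phi^3 \\
 & = \Phi^1\cdot\big(\phi^1\cdot n\otimes\phi^2 a_1\big)\cdot S\big(\Phi^2\phi^3 a_2\big)\Phi^3 \\
 & = \Phi^1_1\phi^1\cdot n\otimes \Phi^1_2\phi^2 a_1 S\big(\Phi^2\phi^3 a_2\big)\Phi^3,
\end{split}
\end{displaymath}
where in the last step I used that the right action on the $M$-factor is trivial and the left action is diagonal. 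Now the idea is to first apply the preantipode axiom \eqref{eq:P1} to the factor $a_1 S(\Phi^2\phi^3 a_2)=a_1 S\big((\Phi^2\phi^3) a_2\big)$, but this needs the coproduct of $a$ sitting on the outside of $\Phi^2\phi^3$; this is exactly the shape $x_1 S(b x_2)=\varepsilon(x)S(b)$ after noting that $\Delta$ is an algebra map so we may split $\Phi^2\phi^3$ however convenient. Applying \eqref{eq:P1} collapses this to $\varepsilon(a)S(\Phi^2\phi^3)$, giving $\tau(n\otimes a)=\varepsilon(a)\,\big(\Phi^1_1\phi^1\cdot n\otimes\Phi^1_2\phi^2 S(\Phi^2\phi^3)\Phi^3\big)$.

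It then remains to simplify the $A$-component $\Phi^1_1\phi^1\otimes\Phi^1_2\phi^2 S(\Phi^2\phi^3)\Phi^3$ to $1\otimes 1$ (so that the whole expression becomes $\varepsilon(a)(n\otimes 1)=n\otimes\varepsilon(a)$, using the counit constraint \eqref{eq:Q2} on the leftover $\Phi$'s together with relation \eqref{eq:P3}). The cleanest route is to use the hexagon \eqref{eq:Q1} to rewrite $(\Delta\otimes A\otimes A)(\Phi)\cdot(\Phi^{-1}\otimes 1)$ in terms of the other factors — precisely the manipulation already used in the $(*)$ step of the proof of Proposition \ref{proptau} — which relates $\Phi^1_1\phi^1\otimes\Phi^1_2\phi^2\otimes\Phi^2\phi^3\otimes\Phi^3$ (a rearrangement of $(\Delta\otimes A\otimes A)(\Phi)(\Phi^{-1}\otimes 1)$) to $(A\otimes A\otimes\Delta)(\Phi^{-1})(1\otimes\Phi)(A\otimes\Delta\otimes A)(\Psi)$; after feeding this through \eqref{eq:P3} and \eqref{eq:Q2} the $A$-component collapses to $1\otimes1$. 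I would expect the main obstacle to be precisely this bookkeeping with three interlocking copies of the reassociator — keeping track of which leg each $\phi^i$ lands on and applying \eqref{eq:Q1} in the correct direction — but it is entirely parallel to the $(*)$ computation in Proposition \ref{proptau}, so no genuinely new idea is needed. Once $\tau(m\otimes a)=m\otimes\varepsilon(a)$ is established, the description $\coinv{(M\otimes A)}{A}=\tau(M\otimes A)=M\otimes\K$ is immediate from Definition \ref{def:coinv}.
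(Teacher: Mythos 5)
Your proposal is correct and essentially the paper's own argument: the paper first reduces to $\tau(m\otimes 1)$ via \eqref{eq:taua} (whose proof is precisely your direct application of \eqref{eq:P1} to strip off $a$), and then collapses $(\Phi^1)_1\phi^1\cdot m\otimes(\Phi^1)_2\phi^2 S(\Phi^2\phi^3)\Phi^3$ to $m\otimes 1$ using \eqref{eq:Q1} in the $(*)$ form together with \eqref{eq:P1}, \eqref{eq:P2}, \eqref{eq:Q2} and \eqref{eq:P3}, exactly as you indicate, before deducing the two inclusions for $\coinv{(M\otimes A)}{A}$. The only small caveat is that this final collapse also requires the intermediate \eqref{eq:P1}/\eqref{eq:P2} applications, not just \eqref{eq:Q2} and \eqref{eq:P3}, but these are contained in the parallel $(*)$ computation of Proposition \ref{proptau} that you cite.
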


\begin{proof}
In view of $\eqref{eq:taua}$ of Proposition \ref{proptau}, we have that
\begin{equation}\label{eq:3.60}
\tau(m\otimes a)=\tau((m\otimes 1)\cdot a)=\tau(m\otimes 1)\,\varepsilon(a).
\end{equation}
Moreover, denoting by $\phi^1\otimes \phi^2\otimes \phi^3=\Phi^{-1}$ the inverse of $\Phi=\Phi^1\otimes \Phi^2\otimes \Phi^3$, we get:
\begin{equation}\label{eq:3.61}
\begin{split}
\tau(m\otimes 1) & \stackrel{\phantom{(19)}}{=}\Phi^1\cdot(m\otimes 1)_0\cdot S(\Phi^2(m\otimes 1)_1)\Phi^3 \stackrel{\eqref{eq:7c}}{=} \Phi^1\cdot (\phi^1m\otimes \phi^2)\cdot S(\Phi^2\phi^3)\Phi^3 \\
 & \hspace{1.6pt}\stackrel{(*)}{=} \phi^1\Phi^1\cdot m\otimes\phi^2\Psi^1(\Phi^2)_1S\left((\phi^3)_1\Psi^2(\Phi^2)_2\right)(\phi^3)_2\Psi^3\Phi^3 \\
 & \stackrel{\eqref{eq:P2}}{=} \phi^1\Phi^1\cdot m\otimes\phi^2\Psi^1\,\varepsilon(\Phi^2)\,S\left((\phi^3)_1\Psi^2\right)(\phi^3)_2\Psi^3\Phi^3 \\
 & \stackrel{\eqref{eq:P1}}{=} \phi^1\Phi^1\cdot m\otimes\phi^2\Psi^1\,\varepsilon(\Phi^2)\,S\left(\Psi^2\right)\,\varepsilon(\phi^3)\,\Psi^3\Phi^3 \\
 & \hspace{1.6pt}\stackrel{\eqref{eq:Q2}}{=} m\otimes \Psi^1S(\Psi^2)\Psi^3 \stackrel{\eqref{eq:P3}}{=} m\otimes 1
\end{split}
\end{equation}
where, again, we denoted by $\Psi$ another copy of $\Phi$ in order to avoid confusion and in $(*)$ we used \eqref{eq:Q1} in the form: $$(\Delta\otimes A\otimes A)(\Phi)\cdot(\Phi^{-1}\otimes 1)=(A\otimes A\otimes \Delta)(\Phi^{-1})\cdot(1\otimes \Psi)\cdot(A\otimes \Delta\otimes A)(\Phi).$$
Relation \eqref{eq:3.61} implies that $M\otimes \K\subseteq \coinv{(M\otimes A)}{A}$. Furthermore, combined with \eqref{eq:3.60}, it shows that if $m\otimes a\in\coinv{(M\otimes A)}{A}$, then $$m\otimes a=\tau(m\otimes a)=m\otimes \varepsilon(a)=m\,\varepsilon(a)\otimes 1.$$
Hence $M\otimes \K\supseteq \coinv{(M\otimes A)}{A}$.
\end{proof}

Now we are ready to prove that the quotient $\overline{M}$ that occurs in the Structure Theorem \ref{FundStructTheo} is isomorphic (as left $A$-module) to $\coinv{M}{A}$.

\begin{proposition}\label{prop3.2.31}
Let $(A,m,u,\Delta,\varepsilon,\Phi,S)$ be a quasi-bialgebra with preantipode $S$ and consider $\coinv{M}{A}$ as a left $A$-module with $A$-action given by \eqref{eq:17}. Then the linear map $\tilde{\tau}$, as defined in \eqref{eq:tautilde}, induces a map
$$\varfun{\tilde{\tau}}{\frac{M}{MA^+}}{\coinv{M}{A}}$$
that we denote by $\tilde{\tau}$ as well. This $\tilde{\tau}$ comes out to be an isomorphism of left $A$-modules with inverse given by $$\sfun{\sigma}{\coinv{M}{A}}{\displaystyle\frac{M}{MA^+}}{m}{\cl{m}}.$$
\end{proposition}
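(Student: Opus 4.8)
The plan is to show that the assignments $\tilde{\tau}\colon \cl{m}\mapsto \tau(m)$ and $\sigma\colon n\mapsto\cl{n}$ are mutually inverse $A$-linear maps between $\frac{M}{MA^+}$ and $\coinv{M}{A}$. First I would check that $\tilde\tau$ is well-defined with values in $\coinv{M}{A}$: by \eqref{eq:taua} it factors through the quotient (this was already observed in the proof of Proposition \ref{prop:preantipode}), and by definition $\coinv{M}{A}=\tau(M)$, so $\tilde\tau(\cl m)=\tau(m)\in\coinv{M}{A}$. Next I would check $\sigma$ is well-defined: for $n\in\coinv{M}{A}$ we have $n=\tau(m)$ for some $m$, and the point is just to land in the quotient, which is automatic; but one should note that $\sigma$ is the restriction of the canonical projection $M\to\frac{M}{MA^+}$ to the subspace $\coinv{M}{A}$, so there is nothing to verify for well-definedness.

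The core is the two compositions. For $\sigma\circ\tilde\tau=\id$: given $\cl m$, we have $\sigma(\tilde\tau(\cl m))=\cl{\tau(m)}=\cl m$, where the last equality is exactly \eqref{eq:20} (which in turn comes from \eqref{eq:tauf} and \eqref{eq:taua}), already established in the proof of Proposition \ref{prop:preantipode}. For $\tilde\tau\circ\sigma=\id$: given $n\in\coinv{M}{A}$, $\tilde\tau(\sigma(n))=\tilde\tau(\cl n)=\tau(n)$, and one needs $\tau(n)=n$, i.e. that every element of $\coinv{M}{A}$ is $\tau$-fixed. This is the idempotency statement $\tau^2=\tau$ from \eqref{eq:taub}: writing $n=\tau(m)$ gives $\tau(n)=\tau(\tau(m))=\tau(m)=n$. (Equivalently one may quote the first description in \eqref{eq:coinvariants}.)

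Finally I would verify $A$-linearity. The $A$-module structure on $\frac{M}{MA^+}$ is $a\cdot\cl m=\cl{a\cdot m}$, and on $\coinv{M}{A}$ it is $a\blacktriangleright n=\tau(a\cdot n)$ from \eqref{eq:17}. For $\sigma$: $\sigma(a\blacktriangleright n)=\cl{\tau(a\cdot n)}\stackrel{\eqref{eq:20}}{=}\cl{a\cdot n}=a\cdot\cl n=a\cdot\sigma(n)$. Since $\tilde\tau$ is the two-sided inverse of the $A$-linear bijection $\sigma$, it is automatically $A$-linear as well; alternatively one checks directly $\tilde\tau(a\cdot\cl m)=\tau(a\cdot m)\stackrel{\eqref{eq:tauc}}{=}a\blacktriangleright\tau(m)=a\blacktriangleright\tilde\tau(\cl m)$ using \eqref{eq:tauc}.

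This proof is essentially a bookkeeping exercise assembling facts already proved in Proposition \ref{proptau} — there is no real obstacle. The only mild subtlety is keeping straight which $A$-action is used on $\coinv{M}{A}$ (the $\blacktriangleright$-action, not the restriction of the left action on $M$, which need not preserve $\coinv{M}{A}$); once that is flagged, everything falls out of \eqref{eq:taub}, \eqref{eq:tauc} and \eqref{eq:20}.
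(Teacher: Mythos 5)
Your proposal is correct and follows essentially the same route as the paper: bijectivity via \eqref{eq:20} and the fact that $\tau$ fixes $\coinv{M}{A}$ (you derive this from \eqref{eq:taub}, the paper quotes \eqref{eq:coinvariants}), and $A$-linearity via \eqref{eq:tauc}. The only detail the paper includes that you skip is the verification that $\blacktriangleright$ of \eqref{eq:17} is indeed a left $A$-action on $\coinv{M}{A}$ (unitality via \eqref{eq:coinvariants} plus \eqref{eq:taud}), but this is a minor bookkeeping point rather than a gap in your argument.
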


\begin{proof}
Let us begin by showing that the induced map $\tilde{\tau}$ is bijective. Indeed, for all $m\in M$ and for all $n\in\coinv{M}{A}$
\begin{displaymath}
\sigma\left(\tilde{\tau}\left(\cl{m}\right)\right)=\cl{\tilde{\tau}(\cl{m})}=\cl{\tau(m)}\stackrel{\eqref{eq:20}}{=}\cl{m} \qquad\mathrm{and}\qquad \tilde{\tau}(\sigma(n))=\tilde{\tau}(\cl{n})=\tau(n)\stackrel{\eqref{eq:coinvariants}}{=}n.
\end{displaymath}
Next, consider the left $A$-action on $\coinv{M}{A}$ given by \eqref{eq:17}: $a\blacktriangleright m:=\tau(a\cdot m)$
for all $a\in A$ and $m\in \coinv{M}{A}$. Actually, it is an action. Indeed, by the definition of $\coinv{M}{A}=\tau(M)$ and in view of $\eqref{eq:taud}$ of Proposition \ref{proptau}, in order to prove it it's enough to verify that $1\blacktriangleright m=m,$
but
\begin{displaymath}
1\blacktriangleright m=\tau(1\cdot m)=\tau(m)\stackrel{\eqref{eq:coinvariants}}{=}m
\end{displaymath}
for all $m\in\coinv{M}{A}$. Moreover, $\eqref{eq:tauc}$ of Proposition \ref{proptau} guarantees that $\tau$ is $A$-linear with respect to this left $A$-action and hence $\tilde{\tau}$, too.
\end{proof}

As a corollary, we get the following analogue of \cite[Theorem 3.8]{HN}.

\begin{corollary}\label{th3.8HN}
Let $(A,m,u,\Delta,\varepsilon,\Phi,S)$ be a quasi-bialgebra with preantipode. Let $M$ be a quasi-Hopf $A$-bimodule. Consider $N:=\coinv{M}{A}$ as a left $A$-module with $A$-action $\blacktriangleright$ as in \eqref{eq:17}, and ${}_\bullet^{} N{}_\circ^{}\otimes {}_\bullet^{}A_\bullet^\bullet$ as a quasi-Hopf $A$-bimodule with structures indicated by the dots. Then:
$$\fun{\nu}{N\otimes A}{M}{n\otimes a}{n\cdot a}$$
provides an isomorphism of quasi-Hopf $A$-bimodules with inverse given by
$\nu^{-1}(m)=\tau(m_0)\otimes m_1$.
\end{corollary}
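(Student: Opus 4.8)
The plan is to assemble the isomorphism $\nu$ from the data already produced. First I would observe that the composite $N \otimes A = \coinv{M}{A} \otimes A \hookrightarrow \overline{M} \otimes A \xrightarrow{\;\eta_M^{-1}\;} M$ is the natural candidate, but it is cleaner to argue directly: define $\nu(n \otimes a) = n \cdot a$ and $\nu^{-1}(m) = \tau(m_0) \otimes m_1$, and check these are mutually inverse. The composite $\nu \circ \nu^{-1}$ applied to $m$ gives $\tau(m_0) \cdot m_1$, which equals $m$ by \eqref{eq:tauf} of Proposition \ref{proptau}. For the other composite, $\nu^{-1}(\nu(n \otimes a)) = \tau((n\cdot a)_0) \otimes (n \cdot a)_1$; using the coaction \eqref{eq:7} on $G(M)$-type objects is not available here since $M$ is arbitrary, so instead I would use that $n \in \coinv{M}{A}$ means $\tau(n) = n$ and $\tau(n_0) \otimes n_1 = \tau(n) \otimes 1 = n \otimes 1$ by the characterizations in \eqref{eq:coinvariants}. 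Then $\tau((n\cdot a)_0) \otimes (n\cdot a)_1 = \tau(n_0 \cdot a_1) \otimes n_1 a_2$; applying \eqref{eq:tauc} (so $\tau(n_0 \cdot a_1) = a_1 \blacktriangleright \tau(n_0)$) together with \eqref{eq:taue} in the form $\tau(a_1 \cdot \tau(n_0)) \cdot a_2 = a \cdot \tau(n_0)$ — or more simply using \eqref{eq:tauf} backwards on $n$ — reduces this to $\tau(n) \otimes a = n \otimes a$ after collapsing $m_1 \mapsto \varepsilon$ appropriately.

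Actually the slickest route: since $n = \tau(n_0) \cdot n_1$ with $\tau(n_0)\otimes n_1 = n \otimes 1$, write $\nu^{-1}\nu(n\otimes a)$ and use \eqref{eq:taua}, \eqref{eq:tauc} to push $a$ through $\tau$; I would spell out that $\tau\big((n\cdot a)_0\big)\otimes (n\cdot a)_1 = \tau(n_0\cdot a_1)\otimes n_1 a_2 = \big(a_1 \blacktriangleright \tau(n_0)\big)\otimes n_1 a_2$, then apply $\blacktriangleright$-equivariance of the pair $(\tau, \blacktriangleright)$ to recognize this as $\tau(n_0)\cdot$-twisted data, and finally invoke $\tau(n_0)\otimes n_1 = n\otimes 1$ to get $\tau(n)\otimes a = n\otimes a$. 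This is the one computation that needs genuine care, since it intertwines the three module/comodule structures, but every ingredient is in Proposition \ref{proptau}.

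It remains to check that $\nu$ is a morphism in $\quasihopf{A}$, i.e. it is left $A$-linear, right $A$-linear, and right $A$-colinear. Left $A$-linearity: $\nu\big(a\cdot(n\otimes b)\big) = \nu(a_1\blacktriangleright n \otimes a_2 b) = \tau(a_1\cdot n)\cdot a_2 b = \big(a\cdot \tau(n)\big) b = a\cdot \tau(n)\cdot b = a\cdot\nu(n\otimes b)$, using \eqref{eq:taue} and $\tau(n)=n$. Right $A$-linearity is immediate from the definitions of the right actions in \eqref{eq:7}, since $(n\otimes b)\cdot a = n \otimes ba \mapsto n\cdot ba = (n\cdot b)\cdot a$. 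Right $A$-colinearity: unwind $\rho$ on ${}_\bullet^{}N{}_\circ^{}\otimes {}_\bullet^{}A{}_\bullet^\bullet$ via \eqref{eq:7}, namely $\rho(n\otimes b) = \Phi^{-1}\cdot((n\otimes b_1)\otimes b_2)$, apply $\nu\otimes A$, and compare with $\rho_M(\nu(n\otimes b)) = \rho_M(n\cdot b)$; the match follows from the third description of $\coinv{M}{A}$ in \eqref{eq:coinvariants}, which says precisely that $\rho_M(n) = \tau(\phi^1\cdot n)\cdot\phi^2\otimes\phi^3$ for $n$ coinvariant, combined with the quasi-Hopf bimodule axioms \eqref{eq:QH1}–\eqref{eq:QH2} and \eqref{eq:Q1} to reassociate. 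I expect the colinearity verification to be the main obstacle, as it is the step where the reassociator $\Phi$ genuinely enters and one must track it through $\rho_M$; everything else is a short application of the already-established properties of $\tau$. Finally, since $\nu$ is a bijective morphism of quasi-Hopf bimodules, it is an isomorphism, and the formula for $\nu^{-1}$ is exactly the one exhibited.
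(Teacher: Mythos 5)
Your proposal is correct in outline but takes a genuinely different route from the paper, and one displayed step needs repair. The paper does not check linearity, colinearity, or the mutual-inverse identities by hand: it observes that $\nu=\eta_M^{-1}\circ(\sigma\otimes A)$, where $\sigma\otimes A=G(\sigma)$ is the image under the functor $G$ of the left $A$-module isomorphism $\sigma\colon\coinv{M}{A}\to\overline{M}$ from Proposition \ref{prop3.2.31}, and $\eta_M^{-1}$ exists and is a morphism in $\quasihopf{A}$ by the Structure Theorem \ref{FundStructTheo}. Both factors being isomorphisms of quasi-Hopf bimodules, so is $\nu$, and the explicit formulas $\nu(n\otimes a)=\tilde{\tau}(\cl{n})\cdot a=n\cdot a$ (via \eqref{eq:etainv} and $\tau(n)=n$) and $\nu^{-1}=(\tilde{\tau}\otimes A)\circ\eta_M$, i.e.\ $\nu^{-1}(m)=\tau(m_0)\otimes m_1$, fall out of \eqref{eq:unit}. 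Your direct verification is more self-contained (it never invokes $\eta$) at the cost of redoing work already encapsulated in Theorem \ref{Th2.4} and Proposition \ref{prop:preantipode}; note also that the colinearity check is lighter than you anticipate: since $\rho_M(n\cdot b)=\rho_M(n)\cdot b$, it follows at once from the third description in \eqref{eq:coinvariants}, with no further appeal to \eqref{eq:Q1} or \eqref{eq:QH2}.

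The step to fix is in your computation of $\nu^{-1}\circ\nu$: the equality $\tau(n_0\cdot a_1)=a_1\blacktriangleright\tau(n_0)$ is false as written, because $n_0\cdot a_1$ is the \emph{right} action (coming from $(m\otimes b)\cdot a=m\cdot a_1\otimes ba_2$), whereas \eqref{eq:tauc} and the action $\blacktriangleright$ concern the \emph{left} action. The correct, and simpler, step is \eqref{eq:taua}:
\begin{equation*}
\tau\big((n\cdot a)_0\big)\otimes (n\cdot a)_1=\tau(n_0\cdot a_1)\otimes n_1a_2\stackrel{\eqref{eq:taua}}{=}\tau(n_0)\,\varepsilon(a_1)\otimes n_1a_2=\tau(n_0)\otimes n_1a,
\end{equation*}
after which the second description in \eqref{eq:coinvariants} gives $\tau(n_0)\otimes n_1a=\tau(n)\otimes a=n\otimes a$. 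You do mention \eqref{eq:taua} as the tool for pushing $a$ through $\tau$, so this is a local slip rather than a structural flaw, but the chain of equalities as displayed does not hold and should be replaced by the one above.
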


\begin{proof}
In view of Proposition \ref{prop3.2.31},
$\eta_M^{-1}\circ (\sigma\otimes A)$ is an isomorphism and
$$\left(\eta_M^{-1}\circ (\sigma\otimes A)\right)(n\otimes a)\stackrel{\eqref{eq:etainv}}{=}
n\cdot a$$
for all $n\in \coinv{M}{A}$ and $a\in A$. The inverse is given for all $m\in M$ by $$((\tilde{\tau}\otimes A)\circ \eta_M)(m)=\tilde{\tau}(\cl{m_0})\otimes m_1=\tau(m_0)\otimes m_1.$$
\end{proof}

\subsection{Uniqueness and gauge transformation}\label{section4.3}

Let us conclude this section with two important results about the preantipode. First of all its uniqueness and, secondly, the fact that quasi-bialgebras with preantipode form a class of quasi-bialgebras closed under gauge transformation.

\begin{theorem}\label{th3.3.9}
Let $(A,m,u,\Delta,\varepsilon,\Phi)$ be a quasi-bialgebra. If there exists a preantipode $S$ for $A$, then it is unique.
\end{theorem}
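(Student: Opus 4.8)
The plan is to exploit the bijection established in Theorem \ref{FundStructTheo} between preantipodes and the maps $\tilde{\tau}$, reducing uniqueness of $S$ to a rigidity statement about the adjunction $(F,G,\eta,\epsilon)$. Concretely, a preantipode $S$ produces $\tilde{\tau}$ as in \eqref{eq:tautilde}, and by the proof of Theorem \ref{Th2.4} this $\tilde{\tau}$ is equivalent to the inverse $\eta_M^{-1}$ of the unit of the adjunction (via $\tilde{\tau}_M = \eta_M^{-1}\circ(\overline{M}\otimes u)\circ r_{\overline{M}}^{-1}$ and $\eta_M^{-1} = \mu_M\circ(\tilde{\tau}_M\otimes A)$). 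Since the inverse of a natural isomorphism is unique, the family $\{\tilde{\tau}_M\}_M$ is uniquely determined once we know $(F,G,\eta,\epsilon)$ is an equivalence. So two preantipodes $S$ and $S'$ give rise to the \emph{same} $\tilde{\tau}_M$ for every quasi-Hopf bimodule $M$; it then remains to recover $S$ uniquely from $\tilde{\tau}$.

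To do that I would specialize to the distinguished quasi-Hopf bimodule $M = A\hat\otimes A$ introduced before Theorem \ref{FundStructTheo}, whose coaction is \eqref{eq:TAstructb}. By construction $\hat\eta_A = \eta_{A\hat\otimes A}$, and the formula \eqref{eq:defS}, namely $S(a) = (A\otimes\varepsilon)\hat\eta_A^{-1}(\cl{1\otimes a}\otimes 1)$, exhibits $S$ as a composite of $\hat\eta_A^{-1}$ with fixed structural maps. Since $\hat\eta_A^{-1}$ is forced (it is the unique inverse of the isomorphism $\hat\eta_A$, which itself depends only on the quasi-bialgebra data), $S$ is determined. One must check that this $S$ obtained from $\hat\eta_A^{-1}$ coincides with the \emph{given} preantipode — but that is precisely the content of the paragraph following Theorem \ref{preantipode} combined with Theorem \ref{th3.8HN}/Proposition \ref{prop3.2.31}: starting from a preantipode $S$ one builds $\tilde{\tau}$ (hence $\hat\eta_A^{-1}$) via \eqref{eq:tautilde}, and reading off $(A\otimes\varepsilon)\hat\eta_A^{-1}(\cl{1\otimes a}\otimes 1)$ returns $S(a)$ after applying the counit normalizations \eqref{eq:Q2} and $\varepsilon\circ S = \varepsilon$ from \eqref{eq:15}.

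Putting the pieces together: given preantipodes $S, S'$, Theorem \ref{FundStructTheo} says both make $(F,G,\eta,\epsilon)$ an equivalence, so $\eta$ is a natural isomorphism whose inverse — in particular $\hat\eta_A^{-1}$ on $A\hat\otimes A$ — is the \emph{same} map regardless of which preantipode we started with. Then \eqref{eq:defS} yields $S(a) = (A\otimes\varepsilon)\hat\eta_A^{-1}(\cl{1\otimes a}\otimes 1) = S'(a)$ for all $a\in A$. Alternatively, and perhaps more transparently, one can argue directly on a single quasi-Hopf bimodule: for any $M$, Proposition \ref{proptau} gives $\tau_M(m) = \Phi^1\cdot m_0\cdot S(\Phi^2 m_1)\Phi^3$, and by the uniqueness of $\eta_M^{-1}$ the maps $\tau_M$ and $\tau'_M$ agree; evaluating the common value on $M = A\hat\otimes A$ at $1\otimes a$ and applying $A\otimes\varepsilon$ together with \eqref{eq:Q2} collapses the $\Phi$'s and isolates $S(a) = S'(a)$.

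The main obstacle is the last extraction step — verifying cleanly that the map reconstructed from $\tilde{\tau}$ (equivalently from $\hat\eta_A^{-1}$) via $(A\otimes\varepsilon)(\,\cdot\,)$ genuinely reproduces the preantipode we began with, rather than merely some map satisfying \eqref{eq:P1}--\eqref{eq:P3}. This is handled by running the computation in the paragraph after Theorem \ref{preantipode} in reverse on the bimodule $A\hat\otimes A$: one substitutes $\tilde{\tau}$ from \eqref{eq:tautilde} into $\hat\eta_A^{-1}(\cl{1\otimes a}\otimes 1)$, uses the explicit coaction \eqref{eq:TAstructb}, and simplifies with the counit axioms \eqref{eq:Q2}, the counitality of $\Delta$, and \eqref{eq:15}; everything except a single bare $S(a)$ cancels. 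Once this identification is in place, uniqueness is immediate.
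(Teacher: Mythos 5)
Your argument is correct and follows essentially the same route as the paper: both preantipodes make the unit of the adjunction invertible, the inverse $\hat{\eta}_A^{-1}$ on the distinguished bimodule $A\hat{\otimes}A$ is unique, and the preantipode is then read off from $\hat{\eta}_A^{-1}$. The paper packages the extraction step through the identity \eqref{eq:30} (expressing $\hat{\eta}_A^{-1}$ in terms of $S$ and $\Phi^{-1}$, then applying $A\otimes\varepsilon$ and \eqref{eq:Q2}), whereas you verify directly that $(A\otimes\varepsilon)\hat{\eta}_A^{-1}(\cl{1\otimes a}\otimes 1)$ returns the given $S(a)$ via \eqref{eq:tautilde} and the coaction \eqref{eq:TAstructb} --- the same computation in a marginally different dress, and it does close as you describe.
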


\begin{proof}
Assume that $S$ and $T$ are both preantipodes for $A$. Then we know that the adjunction $(F,G,\eta,\epsilon)$ defined in Theorem \ref{thAdj} is an equivalence of categories and that the unit $\eta$ is a natural isomorphism. Furthermore, in view of \eqref{eq:30}, we can express explicitly $\hat{\eta}^{-1}_A$ as a function of $S$ on the one side:
\begin{equation*}
\hat{\eta}_A^{-1}(\cl{a\otimes b}\otimes c)=aS(\phi^1b)\phi^2c_1\otimes \phi^3c_2
\end{equation*}
for all $a,b,c\in A$, and as a function of $T$ on the other side:
\begin{equation*}
\hat{\eta}_A^{-1}(\cl{a\otimes b}\otimes c)=aT(\phi^1b)\phi^2c_1\otimes \phi^3c_2
\end{equation*}
for all $a,b,c\in A$. In particular, by uniqueness of the inverse, for $a=c=1$ we have that:
\begin{equation*}
S(\phi^1b)\phi^2\otimes \phi^3=T(\phi^1b)\phi^2\otimes \phi^3
\end{equation*}
for all $b\in A$. If we apply $A\otimes\varepsilon$ on both sides of this relation and we simplify in view of
\begin{equation*}
(A\otimes A\otimes\varepsilon)(\Phi^{-1})\stackrel{\eqref{eq:Q2}}{=}1\otimes 1,
\end{equation*}
then we find that $S(b)=T(b)$ for all $b\in A$, as we claimed.
\end{proof}

\begin{proposition}\label{prop3.3.16}
Let $(A,m,u,\Delta,\varepsilon,\Phi,S)$ be a quasi-bialgebra with preantipode and $F\in A\otimes A$ be a gauge transformation on $A$. Define, for $a\in A$,
\begin{equation}\label{eq:gauge}
S_F(a):=F^1S(f^1aF^2)f^2.
\end{equation}
Then $(A_F,m,u,\Delta_F,\varepsilon,\Phi_F,S_F)$ is a quasi-bialgebra with preantipode.
\end{proposition}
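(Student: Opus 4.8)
The plan is to verify directly that the map $S_F$ of \eqref{eq:gauge} is a preantipode for $A_F$. Since $(A_F,m,u,\Delta_F,\varepsilon,\Phi_F)$ is already known to be a quasi-bialgebra, it only remains to check that $S_F$ is $\K$-linear, which is evident, and that it satisfies \eqref{eq:P1}, \eqref{eq:P2} and \eqref{eq:P3} with $\Delta$, $\Phi$ replaced by $\Delta_F$, $\Phi_F$. All three checks rest on the same elementary remark, to be used repeatedly: if $\sum_k p_k\otimes q_k=1\otimes 1$ in $A\otimes A$, then for every $\K$-linear map $h\colon A\to A$ and every $u,v,w\in A$ one has $\sum_k u\,p_k\,v\,h(q_k)\,w=u\,v\,h(1)\,w$ and $\sum_k u\,h(p_k)\,v\,q_k\,w=u\,h(1)\,v\,w$ (apply $\mathrm{id}_A\otimes h$, resp.\ $h\otimes\mathrm{id}_A$, followed by a multiplication map, to the identity $\sum_k p_k\otimes q_k=1\otimes 1$; here $h$ may absorb constant factors adjacent to $q_k$ or $p_k$). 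The identities to which this is applied are $F\cdot F^{-1}=F^{-1}\cdot F=1\otimes 1$, i.e.\ $F^1f^1\otimes F^2f^2=f^1F^1\otimes f^2F^2=1\otimes 1$, together with the gauge axioms $(A\otimes\varepsilon)(F)=(\varepsilon\otimes A)(F)=1$ and their immediate consequences $(A\otimes\varepsilon)(F^{-1})=(\varepsilon\otimes A)(F^{-1})=1$.

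For \eqref{eq:P1} (written for $A_F$) I would use $\Delta_F(a)=F^1a_1f^1\otimes F^2a_2f^2$ and expand $S_F$ on the second factor with a fresh copy $G^1\otimes G^2=F$, $g^1\otimes g^2=F^{-1}$, so that the left-hand side $F^1a_1f^1\cdot S_F(b\,F^2a_2f^2)$ becomes
\[
F^1a_1\,(f^1G^1)\,S\big(g^1bF^2a_2\,(f^2G^2)\big)\,g^2 .
\]
Since $f^1G^1\otimes f^2G^2=1\otimes 1$, the remark deletes the doubled factor and leaves $F^1a_1\,S(g^1bF^2a_2)\,g^2$; applying now \eqref{eq:P1} for $S$ to the pair $(a_1,a_2)$ collapses this to $\varepsilon(a)\,F^1S(g^1bF^2)\,g^2=\varepsilon(a)S_F(b)$. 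The check of \eqref{eq:P2} for $A_F$ is the mirror image: starting from $S_F(F^1a_1f^1\,b)\cdot F^2a_2f^2$ and collapsing $g^1F^1\otimes g^2F^2=1\otimes 1$ one is left with $G^1\,S(a_1f^1bG^2)\,a_2f^2$, to which \eqref{eq:P2} for $S$ applies. For \eqref{eq:P3} I would write $\Phi_F$ as the product of the five tensors $(1\otimes F)$, $(A\otimes\Delta)(F)$, $\Phi$, $(\Delta\otimes A)(F^{-1})$ and $(F^{-1}\otimes 1)$, each with its own copy of $F$ or $F^{-1}$, form $\Phi_F^1\,S_F(\Phi_F^2)\,\Phi_F^3$, and simplify from the outside in: two applications of the remark annihilate the copies coming from $(1\otimes F)$, from $(F^{-1}\otimes 1)$ and from the expansion of $S_F$; \eqref{eq:P2} for $S$ together with $(A\otimes\varepsilon)(F)=1$ removes the copy coming from $(A\otimes\Delta)(F)$; \eqref{eq:P1} for $S$ together with $(\varepsilon\otimes A)(F^{-1})=1$ removes the copy coming from $(\Delta\otimes A)(F^{-1})$; what survives is exactly $\Phi^1S(\Phi^2)\Phi^3$, which is $1$ by \eqref{eq:P3} for $S$. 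It is worth noticing that $\Phi$ itself is never touched, so the pentagon \eqref{eq:Q1} is not needed here.

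The only genuine difficulty is the bookkeeping: one must carry several mutually independent copies of $F$ and of $F^{-1}$ at once and be scrupulous about whether a given leg of a given copy sits to the left of, inside, or to the right of the symbol $S$, so that the remark above and the correct instance of \eqref{eq:P1} or \eqref{eq:P2} for $S$ are applied to factors that are genuinely adjacent; beyond that the argument is routine. (One could instead argue non-computationally, transporting the equivalence of Theorem \ref{FundStructTheo} along the canonical gauge-twisting equivalences ${}_A\M\simeq{}_{A_F}\M$ and $\quasihopf{A}\simeq\quasihopf{A_F}$ to get that $A_F$ admits some preantipode, and then invoking uniqueness, Theorem \ref{th3.3.9}, to identify it with $S_F$; but the direct check above is shorter and exhibits the explicit formula at the same time.)
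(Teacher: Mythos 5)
Your proposal is correct and follows the same route as the paper: the paper's proof of Proposition \ref{prop3.3.16} is simply the direct verification that $S_F$ satisfies \eqref{eq:P1}, \eqref{eq:P2} and \eqref{eq:P3} for $(A_F,\Delta_F,\Phi_F)$, which it leaves to the reader and which you carry out explicitly (and correctly, including the observation that $\Phi$ itself is never disturbed, so \eqref{eq:Q1} is not needed). Your concluding remark about the alternative categorical argument via Theorems \ref{FundStructTheo} and \ref{th3.3.9} is a valid aside, but the computational check is exactly what the paper intends.
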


\begin{proof}
One easily checks that $S_F$ satisfies \eqref{eq:P1}, \eqref{eq:P2} and \eqref{eq:P3} of Definition \ref{defpreantipode}.
\end{proof}

\section{Quasi-Hopf algebras and some (new) classical results}\label{section5}

Our aim here is to prove that the Structure Theorem is in accordance with the classical results. This is why we open a digression on quasi-Hopf algebras.

\begin{definition}(\cite[p. 1424]{Dri}) A quasi-bialgebra $(A,m,u,\Delta,\varepsilon,\Phi)$ is a \textbf{quasi-Hopf algebra} if there exist elements $\alpha$ and $\beta$ in $A$ and an antimultiplicative endomorphism $s$ of $A$ such that:
\begin{gather}
s(a_1)\alpha a_2=\varepsilon(a)\,\alpha, \label{eq:QHA1} \\
a_1\beta s(a_2)=\varepsilon(a)\,\beta, \label{eq:QHA2} \\
\Phi^1\beta s(\Phi^2)\alpha\Phi^3=1, \label{eq:QHA3} \\
s(\phi^1)\alpha\phi^2\beta s(\phi^3)=1, \label{eq:QHA4}
\end{gather}
where, as usual, $\Phi=\Phi^1\otimes \Phi^2\otimes \Phi^3$ and $\Phi^{-1}=\phi^1\otimes \phi^2\otimes \phi^3$. The triple $(s,\alpha,\beta)$ is usually called \textbf{antipode} \cite{HN} or \textbf{quasi-antipode} \cite{Sch1}. We will use the second terminology, in order to distinguish this one from the ordinary antipode of a Hopf algebra.
\end{definition}

\begin{remark}(\cite[Proposition 1.1]{Dri})
A quasi-antipode for a quasi-Hopf algebra is not unique, but just uniquely determined up to an invertible element. This means that if $(s,\alpha,\beta)$ and $(s',\alpha',\beta')$ are quasi-antipodes for $(A,m,u,\Delta,\varepsilon,\Phi)$ then there exists an invertible element $u\in A$ such that for all $h\in A$
\begin{equation}\label{eq:5.55}
s'(h)=us(h)u^{-1}, \quad \alpha'=u\alpha \quad\mathrm{and}\quad \beta'=\beta u^{-1}.
\end{equation}
\end{remark}

\begin{theorem}\label{th3.2.22}
Let $(A,m,u,\Delta,\varepsilon,\Phi,s,\alpha,\beta)$ be a quasi-Hopf algebra. The map $\varfun{S}{A}{A}$ defined by $S(a)=\beta s(a)\alpha$ for all $a\in A$ is a preantipode for $A$.
\end{theorem}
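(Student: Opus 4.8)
The plan is to verify directly that $S(a) = \beta s(a)\alpha$ satisfies the three axioms \eqref{eq:P1}, \eqref{eq:P2} and \eqref{eq:P3} of Definition \ref{defpreantipode}, using the quasi-Hopf algebra axioms \eqref{eq:QHA1}--\eqref{eq:QHA4}, the antimultiplicativity of $s$, and the quasi-bialgebra axioms \eqref{eq:Q1}--\eqref{eq:Q5} (in particular quasi-coassociativity \eqref{eq:Q3} and the normalization \eqref{eq:Q2}). So this is a computational verification; the only real decisions are which axiom to feed in at which point.

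First I would check \eqref{eq:P2}, namely $S(a_1 b) a_2 = \varepsilon(a) S(b)$. Expanding, $S(a_1 b) a_2 = \beta s(a_1 b)\alpha a_2 = \beta s(b) s(a_1)\alpha a_2$ by antimultiplicativity of $s$. Now I would like to move the $a_2$ leftwards past $\alpha$ and $s(a_1)$; this is exactly what \eqref{eq:QHA1} does: $s(a_1)\alpha a_2 = \varepsilon(a)\alpha$. Hence $S(a_1 b) a_2 = \beta s(b)\varepsilon(a)\alpha = \varepsilon(a)\beta s(b)\alpha = \varepsilon(a) S(b)$. Dually, \eqref{eq:P1}, namely $a_1 S(b a_2) = \varepsilon(a) S(b)$, expands to $a_1 \beta s(b a_2)\alpha = a_1 \beta s(a_2) s(b)\alpha$, and now \eqref{eq:QHA2} gives $a_1 \beta s(a_2) = \varepsilon(a)\beta$, so this equals $\varepsilon(a)\beta s(b)\alpha = \varepsilon(a) S(b)$. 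These two are quick.

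The harder identity is \eqref{eq:P3}: $\Phi^1 S(\Phi^2)\Phi^3 = 1$, i.e. $\Phi^1 \beta s(\Phi^2)\alpha \Phi^3 = 1$. This is precisely the quasi-Hopf axiom \eqref{eq:QHA3}, so in fact \eqref{eq:P3} holds immediately with no further work. (I should double-check that the indices match: \eqref{eq:QHA3} reads $\Phi^1\beta s(\Phi^2)\alpha\Phi^3 = 1$, which is literally $\Phi^1 S(\Phi^2)\Phi^3$ once we substitute $S(\Phi^2) = \beta s(\Phi^2)\alpha$ — yes, they coincide.) So, somewhat pleasantly, the axiom \eqref{eq:QHA4} involving $\Phi^{-1}$ is not even needed for this particular direction; it is the compatibility condition that makes the quasi-antipode datum rigid, but the preantipode only needs the "one-sided" relations \eqref{eq:QHA1}, \eqref{eq:QHA2}, \eqref{eq:QHA3}.

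Thus the proof is essentially three short lines. The main point to be careful about — the only place a sign or ordering error could creep in — is tracking the order of factors when applying antimultiplicativity $s(xy) = s(y)s(x)$ and then recognizing \eqref{eq:QHA1}/\eqref{eq:QHA2} inside the resulting expression; once the factors are grouped as $s(a_1)\alpha a_2$ (resp. $a_1\beta s(a_2)$) the quasi-Hopf axioms finish it. I would present the three verifications in the order \eqref{eq:P1}, \eqref{eq:P2}, \eqref{eq:P3}, each as a one-line chain of equalities with the invoked identity cited over each equals sign, and remark that \eqref{eq:QHA4} is not used. One could also add, as a sanity check, that applying $\varepsilon$ recovers $\varepsilon\circ S = \varepsilon$ via \eqref{eq:15} and $\varepsilon(S(1)) = 1$, consistent with the general remarks following Definition \ref{defpreantipode}, but this is not needed for the statement.
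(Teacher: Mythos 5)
Your proposal is correct and follows essentially the same route as the paper: a direct verification of \eqref{eq:P1}, \eqref{eq:P2}, \eqref{eq:P3} using the antimultiplicativity of $s$ together with \eqref{eq:QHA2}, \eqref{eq:QHA1} and \eqref{eq:QHA3} respectively, with \eqref{eq:QHA4} indeed playing no role. The only differences are cosmetic (order of the three checks and naming of the variables).
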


\begin{proof} We just need to check that the axioms are satisfied, whence we verify \eqref{eq:P1}, \eqref{eq:P2} and \eqref{eq:P3} in the given order:
\begin{gather*}
b_1S(ab_2)=b_1\beta s(ab_2)\alpha=b_1\beta s(b_2)s(a)\alpha\stackrel{\eqref{eq:QHA2}}{=}\varepsilon(b)\beta s(a)\alpha=\varepsilon(b)S(a),\\
S(a_1b)a_2=\beta s(b)s(a_1)\alpha a_2\stackrel{\eqref{eq:QHA1}}{=}\beta s(b)\alpha \varepsilon(a)=S(b)\varepsilon(a), \\
\Phi^1S(\Phi^2)\Phi^3=\Phi^1\beta s(\Phi^2)\alpha\Phi^3\stackrel{\eqref{eq:QHA3}}{=}1.
\end{gather*}
\end{proof}

\begin{corollary}\label{cor3.2.27}
Let $(A,m,u,\Delta,\varepsilon,\Phi,s,\alpha,\beta)$ be a quasi-Hopf algebra. Then the adjunction $(F,G,\eta,\epsilon)$ of Theorem \ref{thAdj} is an equivalence of categories.
\end{corollary}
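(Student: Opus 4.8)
The plan is to deduce this statement in one line from the two preceding results. By Theorem \ref{th3.2.22}, the quasi-Hopf algebra $(A,m,u,\Delta,\varepsilon,\Phi,s,\alpha,\beta)$ carries a preantipode, namely $S(a)=\beta s(a)\alpha$; this is exactly the content of assertion $(3)$ in the Structure Theorem \ref{FundStructTheo} for the quasi-bialgebra underlying $A$. I would then invoke the implication $(3)\Rightarrow(1)$ of Theorem \ref{FundStructTheo}: the existence of a preantipode forces the adjunction $(F,G,\eta,\epsilon)$ of Theorem \ref{thAdj} to be an equivalence of categories. That completes the argument.

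If one preferred not to route through the full Structure Theorem, the alternative plan would be to make the equivalence visible directly: take the preantipode $S=\beta s(-)\alpha$ supplied by Theorem \ref{th3.2.22}, form for each $M\in\quasihopf{A}$ the map $\tau$ of \eqref{eq:tau}, observe via Proposition \ref{proptau} that it satisfies \eqref{eq:taua}, \eqref{eq:tauf} and \eqref{eq:taug}, and then conclude from Proposition \ref{prop:preantipode} together with Theorem \ref{Th2.4} that $\eta$ is a natural isomorphism, hence that $(F,G)$ is an equivalence. Both routes are bookkeeping on top of results already established, so I would present the short one.

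There is no real obstacle here: all the genuine work has already been done, namely the verification in the proof of Theorem \ref{th3.2.22} that $S=\beta s(-)\alpha$ satisfies \eqref{eq:P1}--\eqref{eq:P3} (using the quasi-antipode axioms \eqref{eq:QHA1}--\eqref{eq:QHA3} and the antimultiplicativity of $s$), and the equivalence of $(1)$ and $(3)$ in Theorem \ref{FundStructTheo}. The corollary is simply the composition of these two facts.

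\begin{proof}
By Theorem \ref{th3.2.22} the map $S(a)=\beta s(a)\alpha$ is a preantipode for $A$. Hence assertion $(3)$ of Theorem \ref{FundStructTheo} is satisfied, and by the implication $(3)\Rightarrow(1)$ of that theorem the adjunction $(F,G,\eta,\epsilon)$ of Theorem \ref{thAdj} is an equivalence of categories.
\end{proof}
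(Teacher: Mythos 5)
Your proof is correct and follows exactly the route the paper intends: Theorem \ref{th3.2.22} supplies the preantipode $S(a)=\beta s(a)\alpha$, and the implication $(3)\Rightarrow(1)$ of the Structure Theorem \ref{FundStructTheo} then yields the equivalence. The paper treats this as an immediate consequence of those two results, so your one-line argument matches its reasoning.
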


By the foregoing we have that every quasi-Hopf algebra is a quasi-bialgebra with preantipode. It is more than likely that the converse does not hold, as we conjecture here, even if we are not able to provide an example at the moment. 

\begin{conjecture}
There is a quasi-bialgebra with preantipode which is not a quasi-Hopf algebra.
\end{conjecture}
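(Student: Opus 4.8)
The plan is to produce an explicit witness for the conjecture: a quasi-bialgebra carrying a preantipode in the sense of Definition~\ref{defpreantipode} but admitting no quasi-antipode $(s,\alpha,\beta)$. Before searching, I would first delimit where such an example can possibly live. By Proposition~\ref{prop3.2.30} a preantipode forces a quasi-antipode whenever the reassociator is trivial; by Corollary~\ref{cor3.3.11} the two notions agree on commutative quasi-bialgebras; and by the finite-dimensional instance of Theorem~\ref{thSch} they agree in finite dimensions as well. Hence any witness is necessarily \emph{infinite-dimensional}, \emph{noncommutative}, and equipped with a \emph{nontrivial} reassociator $\Phi$. This already explains why no naive example has surfaced: the entire finite-dimensional toolbox is useless here, which is exactly why the statement is recorded only as a conjecture.

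The most promising concrete route is to transport Schauenburg's dual-side example \cite{Sch1,Sch3}: a dual quasi-bialgebra $L$ for which the Structure Theorem holds (equivalently, which carries a preantipode in the sense of \cite{AP1}) but which is not a dual quasi-Hopf algebra. By the dimension restriction just isolated, $L$ is forced to be infinite-dimensional, so one cannot simply dualize it---the full linear dual $L^{*}$ fails to be a quasi-bialgebra because the comultiplication lands in $(L\otimes L)^{*}$ rather than in $L^{*}\otimes L^{*}$. I would therefore try to replace $L^{*}$ by a suitable restricted (continuous) dual $L^{\circ}$ on which $\Delta$ does close up, and then check that the dual-quasi preantipode of $L$ dualizes to a genuine preantipode on $L^{\circ}$ through the characterization of Theorem~\ref{FundStructTheo}. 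The uniqueness statement of Theorem~\ref{th3.3.9} serves as a useful sanity check, since it pins down the candidate $S$ unambiguously once it exists.

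Should the restricted-dual construction prove too lossy---that is, should $L^{\circ}$ collapse to something dual quasi-Hopf after all---the fallback is a direct construction. Guided by the constraints above, I would build a pointed quasi-bialgebra on the group algebra of an infinite group $G$ twisted by a $3$-cocycle $\omega\in Z^{3}(G,\K^{\times})$ whose class is nontrivial and cannot be rigidified, engineering the coaction so that the single $\K$-linear map $S$ required by \eqref{eq:P1}--\eqref{eq:P3} can be written down and verified by hand, while the rigidity of finite-dimensional comodules (which a quasi-antipode would force via Theorem~\ref{th3.2.22}) manifestly fails. The virtue of the preantipode here is precisely its flexibility: it is only $\K$-linear, so it can survive in situations where the rigid, multiplicative data $(s,\alpha,\beta)$ simply cannot be assembled.

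The genuine obstacle, in either route, is the \emph{negative} half: ruling out \emph{every} triple $(s,\alpha,\beta)$. Exhibiting the preantipode is a single linear bookkeeping computation, but the non-existence of a quasi-antipode is a universally quantified statement over all antimultiplicative endomorphisms $s$ and all pairs of distinguished elements $\alpha,\beta$. The natural strategy is to isolate an invariant that any quasi-antipode would have to satisfy but a preantipode need not---plausibly an invertibility or cohomological condition on $\alpha$, recalling from the end of Section~\ref{section5} that the reconstruction of $(s,\alpha,\beta)$ from $S$ is known only when $\alpha$ is invertible, and then only through the elusive isomorphism $\widetilde{\gamma}$. Pinning down and then violating such an invariant is exactly where Schauenburg's reliance on the Krull--Schmidt theorem bites: that argument produces $\widetilde{\gamma}$ abstractly in finite dimensions but offers no handle infinitely far out, so the hard work is to replace it with an \emph{explicit} obstruction that can be shown to be nonzero on the candidate example. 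I expect this to be the crux.
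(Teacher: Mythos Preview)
The statement you are addressing is a \emph{conjecture} in the paper, not a theorem: the authors explicitly write that they ``are not able to provide an example at the moment'' and offer only the dual-side evidence from Schauenburg's example as motivation. There is no proof in the paper to compare against.

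Your proposal is therefore not a proof but a research plan, and you seem aware of this. The constraints you isolate (infinite-dimensional, noncommutative, nontrivial $\Phi$, not a gauge twist of a quasi-Hopf algebra) exactly match the paper's own remark following the conjecture, so your delimitation is sound. Your two proposed routes---dualizing Schauenburg's $L$ via a restricted dual $L^{\circ}$, or building a cocycle-twisted group algebra directly---are plausible starting points, and you correctly identify the genuine obstruction: proving the \emph{non}-existence of any triple $(s,\alpha,\beta)$ is the hard universally-quantified step, and no concrete invariant is yet on the table to violate.

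But nothing here rises to the level of a proof attempt. The restricted-dual route is speculative (you do not verify that $L^{\circ}$ is large enough to retain the preantipode yet small enough to exclude a quasi-antipode), and the fallback construction is a sketch without a candidate $G$ or $\omega$. If the task was to supply a proof, the honest answer is that none exists in the paper and your proposal does not supply one either; it is a reasonable outline of where one might look.
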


Our conjecture is supported by the fact that there exists an example of a dual quasi-bialgebra with preantipode that is not a dual quasi-Hopf algebra. The interested reader may refer to \cite[Example 4.5.1]{Sch3}, where Schauenburg exhibits a dual quasi-bialgebra $H$ that does not admit a quasi-antipode but such that the category ${}^H_{}\M{}_f^{}$ of finite-dimensional left $H$-comodules is left and right rigid. By the left-handed version of \cite[Theorem 2.6]{Sch1}, this is equivalent to say that the adjunction $(F,G)$ of \cite[Theorem 2.7]{AP1} is an equivalence of categories and hence, by \cite[Theorem 3.9]{AP1}, $H$ admits a preantipode (cf. also \cite[Remark 3.12]{AP1} and \cite[Remark 2.17]{AP2}).

\begin{remark}
An example of a quasi-bialgebra $A$ with preantipode but without quasi-antipode has to be a noncommutative quasi-bialgebra (cf. Corollary \ref{cor3.3.11}) with infinite-dimensional underlying vector space (cf. Theorem \ref{thSch}) such that there is no gauge transformation $F$ for which $A$ is the twist of a quasi-Hopf algebra via $F$ (cf. \cite[Remark 5, p. 1425]{Dri}).
\end{remark}

\begin{remark}\label{rem3.3.13}
Let $(H,m,u,\Delta,\varepsilon,s)$ be an ordinary Hopf algebra. Set $\Phi=1\otimes 1\otimes 1$ and $\alpha=\beta=1$. Thus $(H,m,u,\Delta,\varepsilon,\Phi,s,\alpha,\beta)$ is a quasi-Hopf algebra with quasi-antipode $(s,1,1)$. By Theorem \ref{th3.2.22}, $s$ is a preantipode. Furthermore, observe that if $n\in M$ is such that $\tau(n)=n$, then
$$\rho_M(n)=n_0\otimes n_1\stackrel{\eqref{eq:coinvariants}}{=}\tau(\phi^1\cdot n)\cdot \phi^2\otimes \phi^3=\tau(n)\otimes 1=n\otimes 1,$$
so that the ordinary definition of coinvariants and Definition \ref{def:coinv} coincides. As a consequence, we can apply Corollary \ref{th3.8HN} to recover the ordinary Structure Theorem for Hopf modules (\cite[Theorem 4.1.1]{Swe}). Indeed, for every Hopf module $M$, $M\cong\coinv{M}{H}\otimes H$ via the isomorphisms:
\begin{gather*}
\lfun{\nu}{\coinv{M}{H}\otimes H}{M}{m\otimes h}{m\cdot h}, \\
\lfun{\nu^{-1}}{M}{\coinv{M}{H}\otimes H}{m}{\tau(m_0)\otimes m_1}.
\end{gather*}
Moreover, $\tau(m)=\Phi^1\cdot m_0\cdot s(\Phi^2m_1)\Phi^3=m_0\cdot s(m_1)$ implies that $\nu^{-1}(m)=m_0\cdot s(m_1)\otimes m_2$.
\end{remark}

\begin{remark}\label{rem3.3.14}
Let $(A,m,u,\Delta,\varepsilon,\Phi,s,\alpha,\beta)$ be a quasi-Hopf algebra with quasi-antipode $(s,\alpha,\beta)$ and assume that $s$ is bijective. By Theorem \ref{th3.2.22}, $S(\cdot)=\beta s(\cdot)\alpha$ is a preantipode. Thus the map $\tau$ has the form:
$$\tau(m)=\Phi^1\cdot m_0\cdot \beta s(\Phi^2m_1)\alpha \Phi^3=\Phi^1\cdot m_0\cdot \beta s(s^{-1}(\alpha\Phi^3)\Phi^2m_1).$$
This $\tau$ is exactly the projection $E$ of Hausser and Nill and $\coinv{M}{A}$, obtained as image of $\tau$, is the same $\coinv{M}{A}$ that appears in \cite[Definition 3.5]{HN} and \cite[Corollary 3.9]{HN}. Moreover, the $A$-action $\blacktriangleright$ coincides with the action they indicate with $\triangleright$ and Corollary \ref{th3.8HN} is precisely \cite[Theorem 3.8]{HN}.
\end{remark}

This last remark and the previous one show how the theory we developed here latch on to the traditional results about Hopf and quasi-Hopf bimodules.

\begin{remark}\label{rem3.3.15}
In \cite{BC} a different space of coinvariant elements is introduced for a quasi-Hopf $A$-bimodule over a quasi-Hopf algebra $(A,m,u,\Delta,\varepsilon,\Phi,s,\alpha,\beta)$. 

Define $T\otimes U\otimes V\otimes W:=(1\otimes \Phi^{-1})\cdot (A\otimes A\otimes \Delta)(\Phi)$ (summation understood) and the right $A\otimes A\otimes A\otimes A$-action on $A\otimes A$ given on generators by:
$$(a\otimes b)\triangleleft(c\otimes d\otimes e\otimes f):=s(d)ae\otimes s(c)bf$$
and extended by linearity (cf. \cite[proof of Lemma 1, p. 1427]{Dri}). Next, set
$$\gamma=\gamma^1\otimes \gamma^2:=(\alpha\otimes \alpha)\triangleleft(T\otimes U\otimes V\otimes W)=s(U)\alpha V\otimes s(T)\alpha W$$
and
\begin{equation*}
\begin{split}
f & =f^1\otimes f^2 :=(s\otimes s)(\Delta^{op}(\phi^1))\cdot\gamma\cdot \Delta(\phi^2\beta s(\phi^3)) \\
 & = (\alpha\otimes \alpha)\triangleleft((1\otimes \Phi^{-1})\cdot (A\otimes A\otimes \Delta)(\Phi)\cdot (\Delta\otimes \Delta)(p_R)),
\end{split}
\end{equation*}
where $p_R=p^1\otimes p^2:=\phi^1\otimes \phi^2\beta s(\phi^3)$.

Then this other space of coinvariants is defined to be the set:
$$M^{\overline{coA}}:=\left\{n\in M\mid \rho(n)=\phi^1\cdot n\cdot s(\phi^3_2\Phi^3)f^1\otimes \phi^2\Phi^1\beta s(\phi^3_1\Phi^2)f^2\right\}.$$
Furthermore, a new map
$$\lfun{\overline{E}}{M}{M}{m}{m_0\cdot \beta s(m_1)}$$
is defined, that should be the analogue of the projection $E$ of \cite{HN} and it is connected with $\tau$ via:
$$\overline{E}(m)=\tau(p^1\cdot m)\cdot p^2 \qquad\mathrm{and}\qquad \tau(m)=\Phi^1\cdot \overline{E}(m)\cdot s(\Phi^2)\alpha\Phi^3$$
for all $m\in M$. Bulacu and Caenepeel proved that (cf. \cite[Lemma 3.6]{BC})
$$M^{\overline{\mathrm{co}A}}=\left\{n\in M:\overline{E}(n)=n\right\},$$
that $M^{\overline{\mathrm{co}A}}$ is an $A$-submodule of $M$ with respect to the left adjoint $A$-action $h\triangleright m:=h_1\cdot m\cdot s(h_2)$ and that $\coinv{M}{A}$ and $M^{\overline{\mathrm{co}A}}$ are isomorphic as left $A$-modules via $\tau$ (caveat: not $\tilde{\tau}$) and $\overline{E}$, i.e.
\begin{equation*}
\varfun{\overline{E}}{\coinv{M}{A}}{M^{\overline{\mathrm{co}A}}}\qquad\mathrm{and}\qquad \varfun{\tau}{M^{\overline{\mathrm{co}A}}}{\coinv{M}{A}}
\end{equation*}
are inverses of each others.

Through the definition of this $M^{\overline{\mathrm{co}A}}$, they were able to prove another structure theorem for quasi-Hopf bimodules. Nominally (cf. \cite[Theorem 3.7]{BC}) every quasi-Hopf $A$-bimodule $M$ is isomorphic to $M^{\overline{\mathrm{co}A}}\otimes A$ via
\begin{gather*}
\sfun{\overline{\nu}}{M^{\overline{\mathrm{co}A}}\otimes A}{M}{n\otimes h}{\Phi^1\cdot n\cdot s(\Phi^2)\alpha\Phi^3h}, \\
\sfun{\overline{\nu}^{-1}}{M}{M^{\overline{\mathrm{co}A}}\otimes A}{m}{\overline{E}(m_0)\otimes m_1}.
\end{gather*}
The proof they gave relies broadly on the Structure Theorem of Hausser and Nill (\cite[Theorem 3.8]{HN}). Nevertheless, they claimed in \cite[Remark 3.8]{BC} that their result has a direct proof that doesn't involve the bijectivity of the quasi-antipode and Hausser-Nill Structure Theorem, but also that that proof is long and technical. Here we found a bijectivity-free proof	of their theorem, that has the advantage of not being long nor technical. Indeed, in Remark \ref{rem3.3.14} the bijectivity of $s$ is definitely unnecessary and we have the following commutative diagram:
\begin{displaymath}
\xymatrix @C=40pt @R=20pt {
\overline{M}\otimes A \ar[r]^-{\eta_M^{-1}} \ar[d]_-{\tilde{\tau}\otimes A} & M \\
\coinv{M}{A}\otimes A \ar[r]_-{\overline{E}\otimes A} & M^{\overline{\mathrm{co}A}}\otimes A \ar[u]_-{\overline{\nu}}
}
\end{displaymath}
for any quasi-Hopf bimodule $M$. Unfortunately, it is not clear to us if $M^{\overline{\mathrm{co}A}}$ has an analogue for quasi-bialgebras with preantipode.
\end{remark}

The following lemma comes from \cite[Example 2.4.1]{Maj} and it is retrieved here because it allows us to show that a preantipode is neither antimultiplicative nor anticomultiplicative in general.

\begin{lemma}\label{lemmaMajid}
Let $(H,m,u,\Delta,\varepsilon,s)$ be an ordinary Hopf algebra. Let $\Phi\in H\otimes H\otimes H$ be an invertible element that satisfies \eqref{eq:Q1}, \eqref{eq:Q2} and \eqref{eq:Q3}.
Assume that $c:=\Phi^1s(\Phi^2)\Phi^3\in H$ is invertible with inverse $\beta$ and set $\alpha=1$. Then $(H,m,u,\Delta,\varepsilon,\Phi,s,\alpha,\beta)$ is a quasi-Hopf algebra and $\beta\in \mathcal{Z}(H)$ where $\mathcal{Z}(H)$ is the center of $H$. Furthermore, $(H,m,u,\Delta,\varepsilon,\Phi,S)$ is a quasi-bialgebra with preantipode defined by $S(h)=\beta s(h)$, for each $h\in H$.
\end{lemma}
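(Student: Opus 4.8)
The plan is to verify the quasi-Hopf algebra axioms \eqref{eq:QHA1}--\eqref{eq:QHA4} for the triple $(s,\alpha,\beta)=(s,1,c^{-1})$ and then to invoke Theorem \ref{th3.2.22}, which produces the preantipode $S(h)=\beta s(h)\alpha=\beta s(h)$. First note that $(H,m,u,\Delta,\varepsilon,\Phi)$ is automatically a quasi-bialgebra: \eqref{eq:Q1}, \eqref{eq:Q2} and \eqref{eq:Q3} are hypotheses, while \eqref{eq:Q4} and \eqref{eq:Q5} reduce to the counitality of $\Delta$, which holds because $H$ is a bialgebra.

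The key step is to show that $c=\Phi^1 s(\Phi^2)\Phi^3$, and hence $\beta=c^{-1}$, lies in $\mathcal{Z}(H)$. Since $H$ is a genuine bialgebra, $\Delta$ is coassociative and \eqref{eq:Q3} can be rewritten as $\Delta^{(2)}(a)\cdot\Phi=\Phi\cdot\Delta^{(2)}(a)$, where $\Delta^{(2)}(a)=a_1\otimes a_2\otimes a_3$ is the iterated comultiplication (unambiguous by coassociativity). I would apply to both sides the $\K$-linear map $H\otimes H\otimes H\to H$, $x\otimes y\otimes z\mapsto xs(y)z$. The right-hand side becomes $\Phi^1 a_1 s(a_2)s(\Phi^2)\Phi^3 a_3$, which collapses to $\Phi^1 s(\Phi^2)\Phi^3 a=ca$ by means of the identity $\sum a_1 s(a_2)\otimes a_3=1\otimes a$ (a consequence of $\sum b_1 s(b_2)=\varepsilon(b)1$ together with coassociativity). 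Symmetrically the left-hand side becomes $a_1\Phi^1 s(\Phi^2)s(a_2)a_3\Phi^3$, which collapses to $a\Phi^1 s(\Phi^2)\Phi^3=ac$ via $\sum a_1\otimes s(a_2)a_3=a\otimes 1$. Hence $ac=ca$ for all $a\in H$, and since $c$ is invertible both $c$ and $\beta$ are central.

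Granting this, the first three axioms are short: \eqref{eq:QHA1} is the antipode identity $s(a_1)a_2=\varepsilon(a)1$; \eqref{eq:QHA2} follows from $a_1\beta s(a_2)=\beta a_1 s(a_2)=\varepsilon(a)\beta$ using centrality of $\beta$; and \eqref{eq:QHA3} reads $\Phi^1\beta s(\Phi^2)\Phi^3=\beta\,\Phi^1 s(\Phi^2)\Phi^3=\beta c=1$. The remaining axiom \eqref{eq:QHA4}, which after moving out the central $\beta$ becomes the identity $s(\phi^1)\phi^2 s(\phi^3)=c$ relating $\Phi$ to $\Phi^{-1}$, is the only genuine computation and the place I expect the difficulty to lie: I would deduce it from the pentagon \eqref{eq:Q1}, the one hypothesis that actually ties $\Phi$ to $\Phi^{-1}$, by applying a suitable linear map to it and collapsing the resulting split coproducts via the antipode identities and the counit relations \eqref{eq:Q2}; alternatively one may cite \cite[Example 2.4.1]{Maj} for this point. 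With all four axioms in hand, $(H,m,u,\Delta,\varepsilon,\Phi,s,\alpha,\beta)$ is a quasi-Hopf algebra, and Theorem \ref{th3.2.22} then yields that $S(h)=\beta s(h)$ is a preantipode for $H$.

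Finally, I note that the last assertion can in fact be reached without \eqref{eq:QHA4}: once $\beta\in\mathcal{Z}(H)$ and $\beta c=1$ are available, $S(h)=\beta s(h)$ is immediately seen to satisfy \eqref{eq:P1}--\eqref{eq:P3}, each of the three verifications being a one-line manipulation with the antipode identities and the centrality of $\beta$.
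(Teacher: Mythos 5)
Your argument for the centrality of $c$ is exactly the paper's: apply $x\otimes y\otimes z\mapsto x\,s(y)\,z$ to \eqref{eq:Q3} and collapse with $\sum a_1s(a_2)\otimes a_3=1\otimes a$ and $\sum a_1\otimes s(a_2)a_3=a\otimes1$, giving $ac=ca$; your verifications of \eqref{eq:QHA1}--\eqref{eq:QHA3} and the final appeal to Theorem \ref{th3.2.22} also coincide with the paper. The one genuine gap is \eqref{eq:QHA4}: you correctly identify it as the only nontrivial step and correctly locate its source in the pentagon \eqref{eq:Q1}, but you never exhibit the ``suitable linear map'' nor carry out the collapse, so the quasi-Hopf claim is not actually proved; and citing \cite[Example 2.4.1]{Maj} merely delegates the step this lemma is recording. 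Concretely, the paper rewrites \eqref{eq:Q1} as $(A\otimes A\otimes\Delta)(\Phi)\cdot(\Delta\otimes A\otimes A)(\Psi)\cdot(\Phi^{-1}\otimes1)=(1\otimes\Phi)\cdot(A\otimes\Delta\otimes A)(\Psi)$ (with $\Psi$ a second copy of $\Phi$), applies $m\circ(m\otimes m)\circ(s\otimes A\otimes s\otimes A)$, and simplifies using $s(h_1)h_2=\varepsilon(h)1=h_1s(h_2)$ and \eqref{eq:Q2} to obtain
\begin{equation*}
s(\phi^1)\,\phi^2\,s(\phi^3)=\Phi^1 s(\Phi^2)\Phi^3=c,
\end{equation*}
whence $s(\phi^1)\alpha\phi^2\beta s(\phi^3)=\beta\,s(\phi^1)\phi^2 s(\phi^3)=\beta c=1$. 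Until you perform (or reproduce) this computation, your proof establishes centrality of $\beta$ but not that $(s,1,\beta)$ is a quasi-antipode.

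On the positive side, your closing observation is correct and is a genuine (small) economy over the paper: once $\beta\in\mathcal{Z}(H)$ and $\beta c=1$ are known, the identities \eqref{eq:P1}--\eqref{eq:P3} for $S(h)=\beta s(h)$ follow in one line each from the antipode identities, so the preantipode conclusion of the lemma does not depend on \eqref{eq:QHA4} at all; the paper instead routes that conclusion through Theorem \ref{th3.2.22}, which requires the full quasi-Hopf structure. So your proposal, as written, proves the second assertion of the lemma but leaves the first one incomplete at \eqref{eq:QHA4}.
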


\begin{proof}
Obviously, if $\Phi$ satisfies \eqref{eq:Q1}, \eqref{eq:Q2} and \eqref{eq:Q3}, thus $(H,m,u,\Delta,\varepsilon,\Phi)$ is a quasi-bialgebra. Then let us show that $c\in Z(H)$. Apply $m\circ(m\otimes H)\circ(H\otimes s\otimes H)$ on both sides of \eqref{eq:Q3} to obtain that
\begin{equation}\label{eq:48}
h_1\Phi^1s(\Phi^2)s(h_2)h_3\Phi^3=\Phi^1h_1s(h_2)s(\Phi^2)\Phi^3h_3.
\end{equation}
Since we know that
\begin{equation}\label{eq:antipode}
s(h_1)h_2=\varepsilon(h)1_H=h_1s(h_2)
\end{equation}
for all $h\in H$ by definition of antipode, we can simplify \eqref{eq:48} to conclude that $hc=ch$ for all $h\in H$. Thus $\beta \in \mathcal{Z}(H)$. Moreover, consider \eqref{eq:Q1} in the form:
$$(H\otimes H\otimes \Delta)(\Phi)\cdot(\Delta\otimes H\otimes H)(\Psi)\cdot(\Phi^{-1}\otimes 1)=(1\otimes \Phi)\cdot(H\otimes \Delta\otimes H)(\Psi)$$
where we set $\Psi=\Psi^1\otimes \Psi^2\otimes \Psi^3=\Phi$, and apply $m\circ (m\otimes m)\circ(s\otimes H\otimes s\otimes H)$ on both sides:
\begin{multline*}
s(\phi^1)s((\Psi^1)_1)s(\Phi^1)\Phi^2(\Psi^1)_2\phi^2s(\phi^3)s(\Psi^2)s((\Phi^3)_1)(\Phi^3)_2\Psi^3 \\
= s(\Psi^1)\Phi^1(\Psi^2)_1s((\Psi^2)_2)s(\Phi^2)\Phi^3\Psi^3.
\end{multline*}
Simplifying once again in view of \eqref{eq:antipode} and \eqref{eq:Q2} we find that
\begin{equation}\label{eq:3.65}
s(\phi^1)\phi^2 s(\phi^3)=s(\phi^1)s((\Psi^1)_1)(\Psi^1)_2\phi^2s(\phi^3)s(\Psi^2)\Psi^3\stackrel{\eqref{eq:antipode}}{=}\Phi^1s(\Phi^2)\Phi^3=c.
\end{equation}

These computations allow us to verify easily that the axioms of a quasi-Hopf algebra are satisfied:
\begin{gather*}
s(a_1)\alpha a_2=s(a_1)a_2=\varepsilon(a)1=\varepsilon(a)\alpha, \\
a_1\beta s(a_2)=a_1s(a_2)\beta=\varepsilon(a)\beta, \\
\Phi^1\beta s(\Phi^2)\alpha\Phi^3=\beta\Phi^1s(\Phi^2)\Phi^3=\beta c=1, \\
s(\phi^1)\alpha\phi^2\beta s(\phi^3)=\beta s(\phi^1)\phi^2s(\phi^3)\stackrel{\eqref{eq:3.65}}{=}\beta c=1.
\end{gather*}
\end{proof}

\begin{remark}\label{rem3.3.19}
As we said before, the previous example allows us to observe that $S$ is not, in general, an antiendomorphism of algebras, since
$$S(ab)=\beta s(ab)\stackrel{(*)}{=}\beta s(b)s(a)=\beta s(b)c\beta s(a)=cS(b)S(a),$$
nor an antiendomorphism of coalgebras, since
$$\Delta(S(a))=\Delta(\beta)\cdot \Delta(s(a))\stackrel{(*)}{=}\Delta(\beta)\cdot(s(a_2)\otimes s(a_1))=\Delta(\beta)\cdot(c\otimes c)\cdot (S(a_2)\otimes S(a_1)),$$
where in $(*)$ we used the fact that the antipode is an antiendomorphism of bialgebras.
Actually, in this particular situation, it depends on $\beta$ and $c$.
\end{remark}

\subsection{The other way around: from preantipodes to quasi-antipodes}\label{section5.1}

Even though we claimed that a quasi-bialgebra with preantipode is not, in general, a quasi-Hopf algebra, there exist partial converses to Theorem \ref{th3.2.22}. The subsequent proposition will retrieve an easy one, but before we need a technical lemma. Moreover, we are going to conclude this last section with some considerations concerning a result, due to Schauenburg, that proves that in the finite-dimensional case these two concepts are equivalent, and with some examples in which this equivalence is explicit.

\begin{lemma}
Let $(A,m,u,\Delta,\varepsilon,\Phi,S)$ be a quasi-bialgebra with preantipode $S$. Then:
\begin{equation} \label{eq:3.39}
S(\phi^1)\phi^2S(\phi^3)=S(1).
\end{equation}
\end{lemma}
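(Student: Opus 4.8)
The statement $S(\phi^1)\phi^2S(\phi^3)=S(1)$ should follow by a direct manipulation using the three preantipode axioms \eqref{eq:P1}, \eqref{eq:P2}, \eqref{eq:P3} together with the quasi-bialgebra axioms \eqref{eq:Q1} and \eqref{eq:Q2}. The basic philosophy is the one already exploited in the proof of Proposition \ref{proptau}: every time a term of the form $(\mathrm{something})_1 S\bigl((\mathrm{something})_2 \, x\bigr)$ or $S\bigl(x\,(\mathrm{something})_1\bigr)(\mathrm{something})_2$ appears, one of \eqref{eq:P1} or \eqref{eq:P2} collapses it to a counit. So the plan is to start from $S(1)=\Phi^1 S(\Phi^2)\Phi^3$ (which is \eqref{eq:P3}, possibly combined with the observation $\varepsilon(S(1))=1$), insert a second copy of the reassociator to split one of the legs $\Phi^2$ via $\Delta$, and then absorb the extra copies using \eqref{eq:P1} and \eqref{eq:P2}.

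Concretely, I would take $\Psi=\Psi^1\otimes\Psi^2\otimes\Psi^3$ to be another copy of $\Phi$ and $\phi^1\otimes\phi^2\otimes\phi^3=\Phi^{-1}$, and read the $3$-cocycle condition \eqref{eq:Q1} in a form that isolates $(A\otimes\Delta\otimes A)(\Phi)$ — the leg on which a comultiplication has been applied — so that after multiplying through by $\Phi^{-1}$'s I get an identity expressing $(A\otimes\Delta\otimes A)(\Psi)$ as a product of copies of $\Phi^{\pm1}$. Applying a suitable map of the shape $m\circ(m\otimes A)\circ(A\otimes S\otimes A)$ (so that the two outer tensorands get multiplied in after $S$ is applied to the middle one) to both sides of this identity turns the right-hand side into a telescoping product in which each $\Delta$-split pair is killed by \eqref{eq:P1} or \eqref{eq:P2} and each remaining $\varepsilon$ is removed by \eqref{eq:Q2}, leaving $S(1)$; the left-hand side, by construction, is exactly $S(\Psi^1)\Psi^2 S(\Psi^3)$, i.e. $S(\phi^1)\phi^2 S(\phi^3)$ up to the choice of which copy of $\Phi$ one calls $\Phi$ and which $\Phi^{-1}$ — note that \eqref{eq:3.39} is stated with $\Phi^{-1}$, so at the very end I would either run the argument with $\Phi^{-1}$ in place of $\Phi$ throughout, or invoke the symmetry $S(\Phi^1)\Phi^2S(\Phi^3)=S(\phi^1)\phi^2S(\phi^3)$ coming from the fact that one can also derive the $\Phi$-version directly (for instance, apply $m\circ(m\otimes A)\circ(A\otimes S\otimes A)$ to \eqref{eq:Q3} with $a=1$, getting $\Phi^1 S(\Phi^2)\Phi^3=$ the reindexed expression on the other side, then clean up).

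Alternatively — and this may in fact be the cleanest route — I would mimic the computation \eqref{eq:3.61} in the proof of the lemma on $\coinv{(M\otimes A)}{A}$, or the displayed computation for $\eqref{eq:taug}$, where an expression of the form $\phi^1\Phi^1 S(\cdots)\cdots$ was shown to reduce to $m\otimes\Psi^1 S(\Psi^2)\Psi^3=m\otimes 1$: the same string of \eqref{eq:P1}, \eqref{eq:P2}, \eqref{eq:P3}, \eqref{eq:Q2} reductions, stripped of the $M$-variable, should directly give $S(\phi^1)\phi^2S(\phi^3)=\Psi^1S(\Psi^2)\Psi^3\stackrel{\eqref{eq:P3}}{=}S(1)$, wait—$\eqref{eq:P3}$ gives $\Psi^1S(\Psi^2)\Psi^3=1$, so one should more carefully keep a loose $S(1)$ factor or a loose $\varepsilon$ somewhere; the correct bookkeeping is that \eqref{eq:P3} only fires once all three $\Psi$-legs are ``free'', and in the present statement the output is $S(1)$ rather than $1$ precisely because one fewer reassociator is available to be absorbed. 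The main obstacle, then, is purely combinatorial: choosing the right rewriting of the pentagon \eqref{eq:Q1} (and deciding whether to split $\Phi^2$ by $\Delta$ on the left or on the right) so that the \eqref{eq:P1}/\eqref{eq:P2} cancellations line up without stray terms, and tracking the Sweedler indices through two simultaneous copies $\Phi$, $\Psi$ and the inverse $\Phi^{-1}=\phi^1\otimes\phi^2\otimes\phi^3$. No genuinely new idea beyond those already used in Proposition \ref{proptau} seems to be required.
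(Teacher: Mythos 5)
Your proposal correctly identifies the toolbox (a rewriting of the pentagon \eqref{eq:Q1}, application of $S$ to suitable tensor legs, and collapse via \eqref{eq:P1}, \eqref{eq:P2}, \eqref{eq:P3}, \eqref{eq:Q2}), and this is indeed the spirit of the paper's argument. But the proof is not actually there: the one step that carries all the difficulty --- choosing the precise form of the pentagon and the precise legs on which to apply $S$ so that the cancellations terminate in $S(1)$ on one side and $S(\phi^1)\phi^2S(\phi^3)$ on the other --- is exactly the step you defer as ``purely combinatorial'' without carrying it out. The paper's proof uses the specific identity obtained by inverting \eqref{eq:Q1} and multiplying on the right by $1\otimes\Phi$, namely $(\Delta\otimes A\otimes A)(\Phi^{-1})\cdot(A\otimes A\otimes\Delta)(\Psi^{-1})\cdot(1\otimes\Phi)=(\Phi^{-1}\otimes 1)\cdot(A\otimes\Delta\otimes A)(\Psi^{-1})$ (two copies of $\Phi^{-1}$, one of $\Phi$), then applies $S\otimes A\otimes S\otimes A$ and multiplies the four factors; the left-hand side collapses in two stages (\eqref{eq:P2}, \eqref{eq:Q2}, then \eqref{eq:P2}, \eqref{eq:P3}, \eqref{eq:Q2}) to $S(1)$, while the right-hand side collapses via \eqref{eq:P1} and \eqref{eq:Q2} to $S(\phi^1)\phi^2S(\phi^3)$. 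Nothing in your sketch pins down this arrangement, and the residual $S(1)$ comes from the surviving factor $S(\psi^1)$ after $\psi^2,\psi^3$ are killed by $\varepsilon$ --- not from ``one fewer reassociator being available''.

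Moreover, the concrete maneuvers you do commit to are flawed. Your proposed starting point $S(1)=\Phi^1S(\Phi^2)\Phi^3$ contradicts \eqref{eq:P3}, which gives $\Phi^1S(\Phi^2)\Phi^3=1$ (and $S(1)=1$ is not an axiom). The map $m\circ(m\otimes A)\circ(A\otimes S\otimes A)$ puts $S$ on the \emph{middle} tensorand, so it cannot produce $S(\phi^1)\phi^2S(\phi^3)$, where $S$ sits on the outer legs; in fact one needs a map on four tensor factors with $S$ on the first and third, as in the paper. The suggested ``$\Phi$-version'' obtained by applying such a map to \eqref{eq:Q3} at $a=1$ is vacuous, since \eqref{eq:Q3} at $a=1$ reads $\Phi=\Phi$, and no symmetry $S(\Phi^1)\Phi^2S(\Phi^3)=S(\phi^1)\phi^2S(\phi^3)$ is available without proof. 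Finally, the alternative route of ``stripping the $M$-variable'' from \eqref{eq:3.61} does not work as stated: those computations involve a single occurrence of $S$ inside $\tau$ and, for $M=\K$, reduce to $\Phi^1S(\Phi^2)\Phi^3=1$, not to an identity with two occurrences of $S$ --- the very bookkeeping problem you flag with ``wait'' but leave unresolved.
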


\begin{proof}
\emph{In what follows we are going to indicate with $\Psi^{-1}=\psi^1\otimes \psi^2\otimes \psi^3$ another copy of $\Phi^{-1}$.} In view of relation \eqref{eq:Q1}, we have the following identity:
$$(\Delta\otimes A\otimes A)(\Phi^{-1})\cdot(A\otimes A\otimes \Delta)(\Psi^{-1})\cdot(1\otimes \Phi)=(\Phi^{-1}\otimes 1)\cdot(A\otimes \Delta \otimes A)(\Psi^{-1}),$$
i.e.
$$(\phi^1)_1\psi^1\otimes (\phi^1)_2\psi^2\Phi^1\otimes \phi^2(\psi^3)_1\Phi^2\otimes \phi^3(\psi^3)_2\Phi^3=\phi^1\psi^1\otimes \phi^2(\psi^2)_1\otimes \phi^3(\psi^2)_2\otimes \psi^3.$$
If we apply $S\otimes A\otimes S\otimes A$ on both sides and then we multiply we get that
\begin{displaymath}
S((\phi^1)_1\psi^1)(\phi^1)_2\psi^2\Phi^1 S(\phi^2(\psi^3)_1\Phi^2)\phi^3(\psi^3)_2\Phi^3=S(\phi^1\psi^1)\phi^2(\psi^2)_1S(\phi^3(\psi^2)_2)\psi^3
\end{displaymath}
and we can simplify it in view of \eqref{eq:P1}, \eqref{eq:P2} and \eqref{eq:Q2} to obtain:
\begin{displaymath}
S(\psi^1)\psi^2\Phi^1S((\psi^3)_1\Phi^2)(\psi^3)_2\Phi^3=S(\phi^1)\phi^2S(\phi^3).
\end{displaymath}
Therefore, simplifying this further in view of \eqref{eq:P2}, \eqref{eq:P3} and \eqref{eq:Q2} again, we can conclude that $S(1)=S(\phi^1)\phi^2S(\phi^3)$, as we claimed.
\end{proof}

\begin{proposition}\label{prop3.2.30}
Let $(A,m,u,\Delta,\varepsilon,\Phi,S)$ be a quasi-bialgebra with preantipode. If $\Phi$ is in the center of $A\otimes A\otimes A$, then $(A,m,u,\Delta,\varepsilon,s)$ is an ordinary Hopf algebra where
\begin{equation}\label{eq:s}
s(a)=\Phi^1S(a\Phi^2)\Phi^3,
\end{equation}
for all $a\in A$. Furthermore $(A,m,u,\Delta,\varepsilon,\Phi,s,\alpha,\beta)$ is a quasi-Hopf algebra with $\alpha=1$ and $\beta=S(1)$. Moreover, for all $a\in A$ one has
\begin{equation}\label{eq:3.51}
S(a)=\beta s(a).
\end{equation}
\end{proposition}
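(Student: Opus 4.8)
The plan is to exploit the centrality of $\Phi$ to collapse $A$ to an ordinary bialgebra, and then to recognise the preantipode $S$ as the antipode $s$ of that bialgebra multiplied by the central invertible element $\beta:=S(1)$.

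First, since $\Phi$ is invertible and central, cancelling it in \eqref{eq:Q3} gives $(A\otimes\Delta)\circ\Delta=(\Delta\otimes A)\circ\Delta$, so $\Delta$ is coassociative; together with the counit axioms \eqref{eq:Q4}, \eqref{eq:Q5} and the fact that $\Delta,\varepsilon$ are algebra maps, this makes $(A,m,u,\Delta,\varepsilon)$ an ordinary bialgebra, hence $(\End(A),*)$ an associative algebra with unit $\varepsilon(-)1_A$. By \eqref{eq:quasiconvinv} we already have $\id_A*S=\varepsilon(-)\beta=S*\id_A$. I would then compute $a_1S(a_2)a_3$ in two ways -- grouping the product as $\bigl(a_1S(a_2)\bigr)a_3$ and as $a_1\bigl(S(a_2)a_3\bigr)$ -- and use coassociativity together with these two identities to get $a\beta=a_1S(a_2)a_3=\beta a$ for all $a$; so $\beta$ is central in $A$.

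Next I would set $s(a):=\Phi^1S(a\Phi^2)\Phi^3$ and show it is an antipode. Using that $\Phi$ commutes with $\Delta(a)\otimes1$ in $A\otimes A\otimes A$, then \eqref{eq:P1} with $b=\Phi^2$, and finally \eqref{eq:P3}, a short computation gives $a_1s(a_2)=\Phi^1a_1S(\Phi^2a_2)\Phi^3=\varepsilon(a)\Phi^1S(\Phi^2)\Phi^3=\varepsilon(a)1_A$, i.e. $\id_A*s$ is the unit of $(\End(A),*)$. By associativity this yields $S=(S*\id_A)*s=(\varepsilon(-)\beta)*s=\beta\,s(-)$, which is \eqref{eq:3.51}. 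Feeding $S=\beta s$ into \eqref{eq:P3} and pulling the central $\beta$ to the front gives $1=\Phi^1S(\Phi^2)\Phi^3=\beta\bigl(\Phi^1s(\Phi^2)\Phi^3\bigr)$, so, $\beta$ being central, $\beta$ is invertible with $\beta^{-1}=\Phi^1s(\Phi^2)\Phi^3=:c$. Hence $s=\beta^{-1}S$ with $\beta^{-1}$ central, whence $\id_A*s=\beta^{-1}(\id_A*S)=\varepsilon(-)1_A$ and likewise $s*\id_A=\varepsilon(-)1_A$: $s$ is a two-sided convolution inverse of $\id_A$, i.e. an antipode, and $(A,m,u,\Delta,\varepsilon,s)$ is an ordinary Hopf algebra.

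It then remains to verify \eqref{eq:QHA1}--\eqref{eq:QHA4} for $(s,\alpha,\beta)$ with $\alpha=1$ and $\beta=S(1)$. Since $s$ is a genuine antipode, \eqref{eq:QHA1} is just $s(a_1)a_2=\varepsilon(a)1_A$; \eqref{eq:QHA2} follows by pulling the central $\beta$ in front and using $a_1s(a_2)=\varepsilon(a)1_A$; \eqref{eq:QHA3} reads $\Phi^1\beta s(\Phi^2)\Phi^3=\beta c=1$, already shown; and for \eqref{eq:QHA4} one pulls out $\beta$ to reduce to $\beta\bigl(s(\phi^1)\phi^2s(\phi^3)\bigr)=1$, where $s(\phi^1)\phi^2s(\phi^3)=\Phi^1s(\Phi^2)\Phi^3=c$ by the computation \eqref{eq:3.65} (whose derivation only uses that $s$ is an antipode and that $\Phi$ satisfies \eqref{eq:Q1}, \eqref{eq:Q2}, exactly as in the proof of Lemma \ref{lemmaMajid}). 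This gives the quasi-Hopf structure, and \eqref{eq:3.51} has already been established. The one genuinely non-routine point is the invertibility of $\beta=S(1)$: the only leverage available is the identity $S=\beta s$, and the trick is that substituting it into the single scalar relation \eqref{eq:P3} displays $\Phi^1s(\Phi^2)\Phi^3$ as an inverse of $\beta$, which is automatically two-sided because $\beta$ is central; everything else is bookkeeping with the centrality of $\Phi$ and of $\beta$ inside the now-ordinary convolution algebra $\End(A)$.
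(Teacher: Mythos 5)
Your proof is correct, and while its first half follows the paper's route, the second half is genuinely different. Like the paper, you get coassociativity by cancelling the central invertible $\Phi$ in \eqref{eq:Q3}, define $s$ by \eqref{eq:s}, and prove $\id_A*s=u\varepsilon$ by the same chain (centrality of $\Phi$ against $\Delta(a)\otimes 1$, then \eqref{eq:P1} and \eqref{eq:P3}); your derivation of \eqref{eq:3.51} via associativity of convolution, $(S*\id_A)*s=S*(\id_A*s)$, is just the slick form of the paper's elementwise computation $S(a)=S(a_1)a_2s(a_3)=\beta s(a)$. Where you diverge is in first proving that $\beta=S(1)$ is central (bracketing $a_1S(a_2)a_3$ two ways, using only coassociativity and \eqref{eq:quasiconvinv} -- a correct argument) and then invertible, with $\beta^{-1}=c:=\Phi^1s(\Phi^2)\Phi^3$, by feeding \eqref{eq:3.51} into \eqref{eq:P3}; the paper establishes neither fact. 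With $\beta$ central and invertible, \eqref{eq:QHA2}--\eqref{eq:QHA4} collapse, and in particular \eqref{eq:QHA4} reduces to the identity $s(\phi^1)\phi^2s(\phi^3)=c$ of \eqref{eq:3.65}, whose derivation in Lemma \ref{lemmaMajid} indeed uses only antimultiplicativity of $s$, the antipode identities and \eqref{eq:Q1}, \eqref{eq:Q2} (and Lemma \ref{lemmaMajid} precedes the proposition, so there is no circularity); in fact, having shown $c$ invertible with inverse $\beta$, you could simply have invoked Lemma \ref{lemmaMajid} wholesale. The paper instead proves \eqref{eq:QHA4} from the preparatory identity \eqref{eq:3.39} together with \eqref{eq:eq3.60}, a route that stays entirely inside preantipode identities and yields \eqref{eq:3.39} as a lemma valid for any quasi-bialgebra with preantipode, without centrality; your route buys the extra structural facts $\beta\in\mathcal{Z}(A)$ and $\beta^{-1}=\Phi^1s(\Phi^2)\Phi^3$ (which the paper only observes in the setting of Lemma \ref{lemmaMajid} and Remark \ref{rem3.3.19}) and exhibits the proposition as essentially a special case of that lemma. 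One small point worth making explicit: for the quasi-antipode axioms, and to quote \eqref{eq:3.65}, you should record that $s$ is antimultiplicative because it is the antipode of the ordinary Hopf algebra just constructed -- the paper does the same.
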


\begin{proof}
In view of \eqref{eq:Q4} and \eqref{eq:Q5}, we know that $\varepsilon$ is a counit for $\Delta$. Moreover, commutativity of $\Phi$ ensures that $\Delta$ is coassociative. Indeed, by \eqref{eq:Q3}:
$$(\Delta\otimes A)(\Delta(a))=\Phi^{-1}((A\otimes \Delta)(\Delta(a)))\Phi=((A\otimes \Delta)(\Delta(a)))\Phi^{-1}\Phi=(A\otimes \Delta)(\Delta(a))$$
for every $a\in A$, so that $(A,m,u,\Delta,\varepsilon)$ is an ordinary bialgebra. Let us show that $s$ is an antipode:
\begin{displaymath}
\begin{split}
(s*\id)(a) & \stackrel{\phantom{(17)}}{=} s(a_1)a_2=\Phi^1S(a_1\Phi^2)\Phi^3a_2\stackrel{(*)}{=}\Phi^1S(a_1\Phi^2)a_2\Phi^3 \\
 & \stackrel{\eqref{eq:P2}}{=} \Phi^1S(\Phi^2)\Phi^3\,\varepsilon(a) \stackrel{\eqref{eq:P3}}{=} (u\circ\varepsilon)(a)
\end{split}
\end{displaymath}
where in $(*)$ we used that $\Phi$ is in the center in the following form:
\begin{equation}\label{eq:eq3.59}
(\Phi^1\otimes a_1\otimes \Phi^2\otimes \Phi^3a_2)=(\Phi^1\otimes a_1\otimes \Phi^2\otimes a_2\Phi^3).
\end{equation}
Analogously:
\begin{displaymath}
\begin{split}
(\id*s)(a) & \stackrel{\phantom{(17)}}{=} a_1s(a_2)=a_1\Phi^1S(a_2\Phi^2)\Phi^3\stackrel{(**)}{=}\Phi^1a_1S(\Phi^2a_2)\Phi^3 \\
 & \stackrel{\eqref{eq:P1}}{=} \Phi^1S(\Phi^2)\Phi^3\,\varepsilon(a) \stackrel{\eqref{eq:P3}}{=} (u\circ\varepsilon)(a)
\end{split}
\end{displaymath}
where $(**)$ follows from $(\Delta(a)\otimes 1)\Phi=\Phi(\Delta(a)\otimes 1).$
Hence $(A,m,u,\Delta,\varepsilon,s)$ is an ordinary Hopf algebra. Moreover:
\begin{displaymath}
S(a)=S(a_1\varepsilon(a_2))=S(a_1)\varepsilon(a_2)\stackrel{(\circ)}{=}S(a_1)a_2s(a_3)\stackrel{\eqref{eq:P2}}{=}S(1)\varepsilon(a_1)s(a_2)=\beta s(a)
\end{displaymath}
where in $(\circ)$ we used that $\id*s=u\circ\varepsilon$ and coassociativity of $\Delta$ to renumber. Now, let us show that $(A,m,u,\Delta,\varepsilon,\Phi,s,\alpha,\beta)$ is a quasi-Hopf algebra:
\begin{itemize}
\item We know that $s$ is an antiendomorphism of $A$, since it is an ordinary antipode.
\item Since $\alpha=1$, we have $s(a_1)\alpha a_2=s(a_1)a_2=\varepsilon(a)1=\varepsilon(a)\alpha$.
\item In view of \eqref{eq:3.51} we get that $a_1\beta s(a_2)=a_1S(a_2)\stackrel{\eqref{eq:P2}}{=}\varepsilon(a)S(1)=\varepsilon(a)\beta.$
\item By \eqref{eq:3.51} again we have that $\Phi^1\beta s(\Phi^2)\alpha \Phi^3=\Phi^1 S(\Phi^2)\Phi^3=1$.
\item In order to prove \eqref{eq:QHA4}, first apply $m\circ (m\otimes A)\circ(S\otimes A\otimes S)$ on both sides of
$$(\Delta\otimes A)(\Delta(a))\Phi^{-1}\stackrel{\eqref{eq:Q3}}{=}\Phi^{-1}(A\otimes \Delta)(\Delta(a))$$
and then simplify in view of \eqref{eq:P1} and \eqref{eq:P2} to get that
\begin{equation}\label{eq:eq3.60}
S(\phi^1)\phi^2S(a\phi^3)=S(\phi^1a)\phi^2S(\phi^3)
\end{equation}
for all $a\in A$. Next, recalling that $\alpha=1$ and that $S(a)=\beta s(a)$ for all $a\in A$:
\begin{displaymath}
\begin{split}
s(\phi^1)\alpha \phi^2\beta s(\phi^3) & \stackrel{\phantom{(50)}}{=} s(\phi^1)\phi^2 S(\phi^3)\stackrel{\eqref{eq:s}}{=}\Phi^1S(\phi^1\Phi^2)\Phi^3\phi^2S(\phi^3) \\
 & \hspace{1.4pt}\stackrel{(\blacktriangle)}{=}\hspace{1.4pt} \Phi^1S(\phi^1\Phi^2)\phi^2S(\phi^3)\Phi^3 \stackrel{\eqref{eq:eq3.60}}{=} \Phi^1S(\phi^1)\phi^2S(\Phi^2\phi^3)\Phi^3 \\
 & \hspace{0.4pt}\stackrel{(\triangle)}{=}\hspace{0.4pt} \Phi^1S(\phi^1)\phi^2S(\phi^3)s(\Phi^2)\Phi^3 \stackrel{\eqref{eq:3.39}}{=} \Phi^1S(1)s(\Phi^2)\Phi^3 \\
 & \stackrel{\eqref{eq:3.51}}{=}\Phi^1S(\Phi^2)\Phi^3\stackrel{\eqref{eq:P3}}{=}1
\end{split}
\end{displaymath}
\end{itemize}
where in $(\triangle)$ we used: $S(ab)=\beta s(ab)=\beta s(b)s(a)=S(b)s(a)$ and in $(\blacktriangle)$ we used \eqref{eq:eq3.59} again, with $a=\phi^2S(\phi^3)$.
\end{proof}

\begin{corollary}\label{cor3.3.11}\emph{(Dual to \cite[Theorem 2.16]{AP2})}
Let $(A,m,u,\Delta,\varepsilon,\Phi,S)$ be a quasi-bialgebra with preantipode. If $A$ is commutative, then all the conclusions of Proposition \ref{prop3.2.30} hold for $A$. In particular, it is an ordinary Hopf algebra.
\end{corollary}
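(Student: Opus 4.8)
The argument is immediate: the only hypothesis of Proposition \ref{prop3.2.30} that needs to be verified is that the reassociator $\Phi$ lies in the center of $A\otimes A\otimes A$. The plan is simply to observe that this is automatic once $A$ is commutative.

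Indeed, the tensor product (over the field $\K$) of commutative $\K$-algebras is again a commutative $\K$-algebra, so that if $A$ is commutative then $A\otimes A\otimes A$ is commutative as well. Consequently $\mathcal{Z}(A\otimes A\otimes A)=A\otimes A\otimes A$, and in particular $\Phi\in\mathcal{Z}(A\otimes A\otimes A)$. Thus the hypotheses of Proposition \ref{prop3.2.30} are satisfied for $(A,m,u,\Delta,\varepsilon,\Phi,S)$.

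Applying Proposition \ref{prop3.2.30} we conclude that $(A,m,u,\Delta,\varepsilon,s)$ is an ordinary Hopf algebra, where $s(a)=\Phi^1S(a\Phi^2)\Phi^3$ for all $a\in A$, that $(A,m,u,\Delta,\varepsilon,\Phi,s,\alpha,\beta)$ is a quasi-Hopf algebra with $\alpha=1$ and $\beta=S(1)$, and that $S(a)=\beta s(a)$ for all $a\in A$. This proves all the assertions of the corollary.

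There is essentially no obstacle to overcome here; the content of the statement is entirely carried by Proposition \ref{prop3.2.30}, and the role of commutativity is merely to supply the centrality of $\Phi$ for free.
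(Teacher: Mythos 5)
Your proof is correct and follows exactly the route the paper intends: the corollary is stated as an immediate consequence of Proposition \ref{prop3.2.30}, the only point being that commutativity of $A$ forces $A\otimes A\otimes A$ to be commutative, so $\Phi$ is automatically central and the proposition applies verbatim.
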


Let us recall now the less trivial result we referred to at the very beginning of this section. This theorem is due to Schauenburg and it states that, at least in the finite-dimensional case, the existence of a preantipode is equivalent to the existence of a quasi-antipode.

\begin{theorem}\label{thSch}\emph{(\cite[Theorem 3.1]{Sch1})}
Let $(A,m,u,\Delta,\varepsilon,\Phi)$ be a finite-dimensional quasi-bialgebra. The following are equivalent:
\begin{itemize}
\item[(1)] $A$ is a quasi-Hopf algebra,
\item[(2)] the adjunction $(F,G,\eta,\epsilon)$ is a category equivalence.
\end{itemize}
\end{theorem}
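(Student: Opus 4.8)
The plan is to prove the two implications separately, combining the general theory of Section \ref{section4} with standard finite-dimensional duality arguments. The implication $(1)\Rightarrow(2)$ is already essentially done: if $A$ is a quasi-Hopf algebra, then by Theorem \ref{th3.2.22} the map $S(a)=\beta s(a)\alpha$ is a preantipode, and by the Structure Theorem \ref{FundStructTheo} (the equivalence $(3)\Rightarrow(1)$ there) the adjunction $(F,G,\eta,\epsilon)$ is an equivalence of categories. Note that finite-dimensionality is not needed for this direction.

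For the converse $(2)\Rightarrow(1)$, first I would invoke the Structure Theorem \ref{FundStructTheo} once more: if $(F,G,\eta,\epsilon)$ is an equivalence, then $A$ admits a preantipode $S$. So the problem reduces to showing that a \emph{finite-dimensional} quasi-bialgebra with preantipode is a quasi-Hopf algebra, i.e.\ that from $S$ one can manufacture a quasi-antipode $(s,\alpha,\beta)$. The natural strategy is to pass to the dual: since $\dim_\K A<\infty$, the linear dual $H:=A^*$ is a dual quasi-bialgebra, and the preantipode $S$ on $A$ dualizes to a preantipode on $H$ in the sense of Ardizzoni--Pavarin \cite{AP1}. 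By \cite[Theorem 3.9]{AP1} (or rather the combination of their results relating preantipodes, equivalences of the comodule adjunction, and rigidity), the category ${}^H\M_f$ of finite-dimensional left $H$-comodules is rigid; equivalently, by the finite-dimensional duality ${}^H\M_f\simeq {}_A\M_f$, the category ${}_A\M_f$ of finite-dimensional left $A$-modules is rigid. But Drinfel'd's definition (\cite[Proposition 1.1]{Dri}) is precisely set up so that ${}_A\M_f$ being rigid forces $A$ to be a quasi-Hopf algebra: the evaluation/coevaluation morphisms for $A$ itself as a module produce the elements $\alpha,\beta$ and the antimultiplicative map $s$, and the rigidity axioms translate exactly into \eqref{eq:QHA1}--\eqref{eq:QHA4}.

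The main obstacle — and the reason this is attributed to Schauenburg rather than proved from scratch here — is the passage through rigidity of the comodule category and back. Concretely, establishing that a preantipode on the dual quasi-bialgebra $H$ yields a rigid category of finite-dimensional comodules is the substance of \cite{Sch1}, and extracting the quasi-antipode $(s,\alpha,\beta)$ from rigidity of ${}_A\M_f$ is non-constructive: it relies on choosing duals for objects, and ultimately on the Krull--Schmidt theorem to pin down the relevant isomorphism (this is the $\widetilde{\gamma}$ alluded to in the introduction). Thus, rather than reproving Schauenburg's theorem, I would present the proof as: $(1)\Rightarrow(2)$ via Theorems \ref{th3.2.22} and \ref{FundStructTheo}; $(2)\Rightarrow(1)$ by first producing a preantipode via Theorem \ref{FundStructTheo}, then citing \cite[Theorem 3.1]{Sch1} (equivalently the dualized \cite[Theorem 3.9]{AP1}) to conclude that in finite dimension a preantipode upgrades to a quasi-antipode. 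The genuinely new content on our side is that the hypothesis ``$(F,G,\eta,\epsilon)$ is an equivalence'' can now be replaced throughout by ``$A$ has a preantipode'', which is the cleaner formulation, and that this makes transparent why the finite-dimensional hypothesis is the only thing preventing a general converse.
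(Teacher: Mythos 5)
Your final formulation --- prove $(1)\Rightarrow(2)$ from Theorem \ref{th3.2.22} together with the Structure Theorem \ref{FundStructTheo} (this is exactly Corollary \ref{cor3.2.27}, and indeed needs no finite-dimensionality), and defer $(2)\Rightarrow(1)$ to \cite[Theorem 3.1]{Sch1} --- is in substance what the paper itself does: Theorem \ref{thSch} is imported from Schauenburg, no independent proof is given, and only the key steps of his argument are recalled because they are needed later. Note also that producing a preantipode before citing \cite{Sch1} is redundant: Schauenburg's theorem applies directly to the hypothesis that the adjunction is an equivalence.

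The sketched ``dual route'', however, contains a genuine misstep. The claim that rigidity of ${}_{A}\M_{f}$ forces $A$ to be a quasi-Hopf algebra because ``Drinfel'd's definition is precisely set up'' that way, with a pointer to \cite[Proposition 1.1]{Dri}, is not correct: Drinfel'd's axioms are designed so that a quasi-antipode \emph{yields} rigidity, and his Proposition 1.1 is the uniqueness-up-to-an-invertible-element statement, not a reconstruction result. The converse implication (rigidity of the finite-dimensional module category $\Rightarrow$ existence of $(s,\alpha,\beta)$) is itself the nontrivial half of Schauenburg's characterization; likewise \cite[Theorem 3.9]{AP1} only relates the preantipode to the structure theorem on the dual side and produces no quasi-antipode. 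So the detour through $A^{*}$, \cite{AP1} and comodule rigidity does not furnish an alternative argument: it ends by invoking the same hard result you set out to avoid. By contrast, the paper follows Schauenburg's direct argument, in which finite-dimensionality and the invertibility of $\hat{\eta}_A$ give, via the Krull--Schmidt Theorem, a left $A$-module isomorphism $\varfun{\widetilde{\gamma}}{\overline{A\otimes A}}{A}$, from which $s$, $\beta=\gamma(1)$ and $\alpha=(A\otimes \varepsilon)\theta^{-1}(1\otimes 1)$ are constructed; retaining these explicit data (cf. \eqref{eq:44}) is precisely what Corollary \ref{cor3.3.22} and the closing examples rely on, and a purely citation-based or rigidity-based treatment would not provide them.
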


\emph{The interested reader may find the proof of this equivalence in \cite{Sch1}, here we fix our attention on the important steps.}

Unfortunately, this is a non-constructive result. From the invertibility of the component of the unit $\eta$ associated to the quasi-Hopf bimodules $A\hat{\otimes }A$ as defined in \eqref{eq:etacap}, and from the finiteness of $A$, one deduces the existence of an isomorphism of left $A$-modules $\varfun{\widetilde{\gamma}}{\overline{A\otimes A}}{A}$ by applying the Krull-Schmidt Theorem.

With this $\widetilde{\gamma}$, on the one hand one defines $\varfun{\gamma}{A}{A}$ by $\gamma(a)=\widetilde{\gamma}(\cl{1\otimes a})$ for all $a\in A$. On the other hand, one equips $A$ with a left $A\otimes A$-module structure given by
\begin{equation}\label{eq:action1}
(x\otimes y)\triangleright a:=\widetilde{\gamma}((x\otimes y)\cdot\widetilde{\gamma}^{-1}(a))
\end{equation}
for all $x,y,a\in A$. The action of the right tensor factor comes out to be of the form
\begin{equation}\label{eq:action2}
(1\otimes x)\triangleright a=as(x)
\end{equation}
for all $a,x\in A$, and for a certain antimultiplicative endomorphism $s$ of $A$. Afterwards, one defines the isomorphism of quasi-Hopf bimodules
\begin{equation}\label{eq:thetadef}
\theta:=(\widetilde{\gamma}\otimes A)\circ\hat{\eta}\,\colon\,A\otimes A\longrightarrow A\otimes A
\end{equation}
and one considers the elements $\beta:=\gamma(1)$ and $\alpha:=(A\otimes \varepsilon)\theta^{-1}(1\otimes 1)$. The conclusion of the proof consists in verifying that $(s,\alpha,\beta)$ is a quasi-antipode for $A$.

As we said, this is a non-constructive proof, because of the role played by the Krull-Schmidt Theorem. However, there are two interesting relations that one can derive from this proof \emph{a posteriori}:
\begin{equation}\label{eq:44}
\widetilde{\gamma}(\cl{a\otimes b})=a\gamma(b)=a\beta s(b)\qquad\mathrm{and}\qquad a_1\gamma(a_2)=\varepsilon(a)\gamma(1).
\end{equation}
Let us compare these with the following ones:
\begin{equation*}
\xi(\cl{a\otimes b})=aS(b)=a\beta s(b)\alpha\qquad\mathrm{and}\qquad a_1S(a_2)=\varepsilon(a)S(1)
\end{equation*}
(cf. \eqref{eq:xiS} and \eqref{eq:P1}). Since they look like the same, we hoped that it was possible to obtain a constructive version by means of the preantipode. Unfortunately again, $\xi$ is not invertible in general, not even in the finite-dimensional case. Hence, it cannot substitute $\widetilde{\gamma}$ in Schauenburg's proof. Nevertheless, the following result holds.

\begin{corollary}\label{cor3.3.22}
Let $(A,m,u,\Delta,\varepsilon,\Phi,S)$ be a quasi-bialgebra with preantipode. If $\xi$ as defined in \eqref{eq:xiS} is bijective, then $A$ is a quasi-Hopf algebra with quasi-antipode given by $\alpha=1$, $\beta=S(1)$ and, for all $a\in A$,
$$s(a)=\xi\left((1\otimes a)\cdot\xi^{-1}(1)\right)=1^1S(a1^2)$$
where $\cl{1^1\otimes 1^2}=\xi^{-1}(1)$.
\end{corollary}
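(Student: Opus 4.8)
The statement should be read as the explicit counterpart of Schauenburg's Theorem \ref{thSch}: the given isomorphism $\xi$ will play exactly the role that the (Krull--Schmidt-produced, hence non-constructive) isomorphism $\widetilde\gamma$ plays there, but now it is handed to us rather than conjured. The plan is: (a) produce the antimultiplicative $s$ out of a module structure transported through $\xi$; (b) pin down $\alpha$ and $\beta$; (c) verify the four axioms \eqref{eq:QHA1}--\eqref{eq:QHA4}. I begin with two elementary facts about $\xi$. Since $\xi(\overline{a\otimes b})=aS(b)$ (cf. \eqref{eq:xiS}), $\xi$ is left $A$-linear for the action on $\overline{A\hat\otimes A}$ given by multiplication on the first tensorand; hence so is $\xi^{-1}$, and therefore $\xi^{-1}(x)=\overline{x1^1\otimes 1^2}$ with $\overline{1^1\otimes 1^2}:=\xi^{-1}(1)$. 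Evaluating $\xi$ back yields $1^1S(1^2)=1$.

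Next I would build $s$. The quotient $\overline{A\hat\otimes A}$ carries a left $A\otimes A$-module structure $(x\otimes y)\cdot\overline{a\otimes b}:=\overline{xa\otimes yb}$: indeed multiplication on the first factor and the quasi-Hopf bimodule left action $y\cdot(a\otimes b)=a\otimes yb$ both preserve the submodule $(A\hat\otimes A)A^+$ and commute with each other. Transporting this action along $\xi$ gives a left $A\otimes A$-action $(x\otimes y)\triangleright a:=\xi\bigl((x\otimes y)\cdot\xi^{-1}(a)\bigr)$ on $A$ in which the first tensorand acts by left multiplication, $(x\otimes 1)\triangleright a=xa$ (using $1^1S(1^2)=1$); since the two one-sided actions commute, $(1\otimes y)\triangleright a=a\,s(y)$ where $s(y):=(1\otimes y)\triangleright 1=\xi\bigl((1\otimes y)\cdot\xi^{-1}(1)\bigr)=1^1S(y1^2)$, which establishes the displayed formula for $s$. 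From $(1\otimes yz)=(1\otimes y)(1\otimes z)$ it follows at once that $s(yz)=s(z)s(y)$, while $s(1)=1$ and $\K$-linearity are immediate, so $s$ is an antimultiplicative endomorphism. Moreover, since $\xi(\overline{1\otimes 1})=S(1)$ we get $\xi^{-1}(S(1))=\overline{1\otimes 1}=\overline{S(1)1^1\otimes 1^2}$, and applying $\xi\circ\bigl((1\otimes a)\cdot-\bigr)$ to both ends gives the key identity $S(a)=S(1)\,s(a)$ for all $a\in A$; writing $\beta:=S(1)$ we thus have $S=\beta s$, whence also $S(ab)=\beta s(ab)=\beta s(b)s(a)=S(b)s(a)$.

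Now set $\alpha:=1$ and $\beta:=S(1)$ (these agree with Schauenburg's prescriptions $\alpha=(A\otimes\varepsilon)\theta^{-1}(1\otimes 1)$ — via \eqref{eq:30}, which applies since a preantipode makes $\hat\eta_A$ invertible by Theorem \ref{FundStructTheo} — and $\beta=\gamma(1)$ with $\gamma(a)=\xi(\overline{1\otimes a})=S(a)$; but for the proof it suffices to verify that this choice works). Axiom \eqref{eq:QHA1} reads $\sum s(a_1)a_2=\varepsilon(a)1$, which follows from $s(a)=1^1S(a1^2)$ by pulling out $1^1$ and invoking \eqref{eq:P2} together with $1^1S(1^2)=1$. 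Axiom \eqref{eq:QHA2} becomes $\sum a_1S(a_2)=\varepsilon(a)S(1)$ once $\beta s=S$ is substituted, which is exactly \eqref{eq:quasiconvinv}. Axiom \eqref{eq:QHA3} becomes $\Phi^1S(\Phi^2)\Phi^3=1$ after substituting $\alpha=1$ and $\beta s=S$, which is \eqref{eq:P3}.

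Only \eqref{eq:QHA4}, namely $\sum s(\phi^1)\phi^2S(\phi^3)=1$ (with $\Phi^{-1}=\phi^1\otimes\phi^2\otimes\phi^3$, after inserting $\alpha=1$, $\beta s=S$), requires the reassociator identities, and this is the step I expect to cost the most bookkeeping. The plan here is: first re-derive the relation
\[
S(\phi^1)\phi^2S(a\phi^3)=S(\phi^1 a)\phi^2S(\phi^3)\qquad(a\in A)
\]
by applying $m\circ(m\otimes A)\circ(S\otimes A\otimes S)$ to \eqref{eq:Q3} in the form $(\Delta\otimes A)(\Delta(a))\Phi^{-1}=\Phi^{-1}(A\otimes\Delta)(\Delta(a))$ and simplifying with \eqref{eq:P1} and \eqref{eq:P2} — this needs no centrality of $\Phi$ and is precisely the computation isolated inside the proof of Proposition \ref{prop3.2.30}. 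Then, expanding $s(\phi^1)=1^1S(\phi^1 1^2)$, applying this relation with $a=1^2$, followed by $S(1^2\phi^3)=S(\phi^3)s(1^2)$, then \eqref{eq:3.39} in the form $\sum_\phi S(\phi^1)\phi^2 S(\phi^3)=S(1)$, then $S(1)s(1^2)=S(1^2)$, and finally $1^1S(1^2)=1$, collapses $\sum s(\phi^1)\phi^2S(\phi^3)$ to $1$. The only subtle points are the bookkeeping in the re-derivation of the auxiliary relation and checking that invertibility of $s$ is never tacitly used; once those are in hand the remaining manipulations are mechanical, and all four axioms being verified, $(s,1,S(1))$ is a quasi-antipode.
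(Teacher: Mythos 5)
Your proposal is correct, and its overall skeleton is the paper's: substitute $\xi$ for Schauenburg's Krull--Schmidt isomorphism $\widetilde{\gamma}$, read off $s(a)=1^1S(a1^2)$, $\alpha=1$, $\beta=S(1)$, and check the four axioms (your verifications of \eqref{eq:QHA1}--\eqref{eq:QHA3} coincide with the paper's). The differences are in how the two key ingredients are established. First, the paper imports from Schauenburg's proof the transported $A\otimes A$-action \eqref{eq:action1}--\eqref{eq:action2} and the a posteriori relation \eqref{eq:44}, which in this context becomes $a\beta s(b)=\xi(\cl{a\otimes b})=aS(b)$; you instead rebuild this from scratch, constructing the $A\otimes A$-module structure on $\cl{A\hat{\otimes}A}$, transporting it along $\xi$, and deriving $S=\beta s$ and the antimultiplicativity of $s$ directly, which makes the argument self-contained rather than a citation of Schauenburg's internal steps. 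Second, for \eqref{eq:QHA4} the paper argues via the isomorphism $\theta=(\xi\otimes A)\circ\hat{\eta}$, the explicit formula \eqref{eq:30} for $\hat{\eta}_A^{-1}$ (available by Theorem \ref{FundStructTheo}), and the identity $(A\otimes\varepsilon)\circ\theta=\xi\circ\pi$, so the axiom is just $(A\otimes\varepsilon)\theta\theta^{-1}(1\otimes 1)=1$; you give a purely equational proof using the identity $S(\phi^1)\phi^2S(a\phi^3)=S(\phi^1a)\phi^2S(\phi^3)$ together with \eqref{eq:3.39}, $S(ab)=S(b)s(a)$ and $1^1S(1^2)=1$, and your chain of substitutions is sound. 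Your observation that the derivation of that auxiliary identity (it is \eqref{eq:eq3.60} inside the proof of Proposition \ref{prop3.2.30}) uses only \eqref{eq:Q3}, \eqref{eq:P1} and \eqref{eq:P2}, and not the centrality of $\Phi$ assumed there, is also correct and worth making explicit. What each route buys: yours avoids invoking the explicit inverse $\hat{\eta}_A^{-1}$ and any relations extracted from Schauenburg's non-constructive proof, at the price of more bookkeeping; the paper's $\theta$-argument is shorter once \eqref{eq:30} is in hand and makes \eqref{eq:QHA4} conceptually transparent as the statement that $\theta^{-1}$ is a section of $\theta$.
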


\begin{proof}
In proving Schauenburg's result \ref{thSch}, the finiteness condition on $A$ is used just to find an isomorphism $\varfun{\widetilde{\gamma}}{\overline{A\otimes A}}{A}$. By hypothesis, we already have such an isomorphism:
$$\lfun{\xi}{\overline{A\otimes A}}{A}{\cl{a\otimes b}}{aS(b)}.$$
Hence, let us substitute this $\xi$ to $\widetilde{\gamma}$ into Schauenburg's proof. We get that $\gamma=S$ and $\beta=S(1)$. Moreover,
$$s(a)=(1\otimes a)\triangleright 1=\xi\left((1\otimes a)\cdot \xi^{-1}(1)\right)=\xi\left(\cl{1^1\otimes a1^2}\right)=1^1S(a1^2)$$
and, recalling \eqref{eq:thetadef},
\begin{displaymath}
\alpha=(A\otimes \varepsilon)\left(\theta^{-1}(1\otimes 1)\right) =(A\otimes \varepsilon)\left(\hat{\eta}_A^{-1}(\xi^{-1}(1)\otimes 1)\right)\stackrel{\eqref{eq:xi}}{=}\xi\left(\xi^{-1}(1)\right)=1
\end{displaymath}
as claimed. Now, one easily checks that the triple $(s,\alpha,\beta)$ is a quasi-antipode. Indeed, recalling that $\xi=\widetilde{\gamma}$ and $S=\gamma$, we find out that the following relation holds for all $a,b\in A$ in this context:
\begin{equation}\label{eq:49}
a\beta s(b)\stackrel{\eqref{eq:44}}{=}\xi(\cl{a\otimes b})=aS(b).
\end{equation}
Consequently, we can take advantage of this to verify that
\begin{equation*}
\begin{split}
s(a_1)\alpha a_2 & \stackrel{\phantom{(62)}}{=} s(a_1)a_2=1^1S(a_11^2)a_2\stackrel{\eqref{eq:P2}}{=}1^1S(1^2)\,\varepsilon(a) \\
 & \stackrel{\eqref{eq:49}}{=}\xi(\cl{1^1\otimes1^2})\,\varepsilon(a)=\xi(\xi^{-1}(1))\,\varepsilon(a)=\varepsilon(a)\alpha
\end{split}
\end{equation*}
and that
$$a_1\beta s(a_2)\stackrel{\eqref{eq:49}}{=}a_1S(a_2)\stackrel{\eqref{eq:P1}}{=}\varepsilon(a)S(1)=\varepsilon(a)\beta.$$
Therefore, \eqref{eq:QHA1} and \eqref{eq:QHA2} are valid. Moreover, \eqref{eq:QHA3} holds because:
$$\Phi^1\beta s(\Phi^2)\alpha\Phi^3\stackrel{\eqref{eq:49}}{=}\Phi^1S(\Phi^2)\Phi^3\stackrel{\eqref{eq:P3}}{=}1.$$
Finally, consider $\theta=(\xi\otimes A)\circ \hat{\eta}$ again and the following relations:
\begin{gather}
\theta^{-1}(1\otimes 1)=\hat{\eta}^{-1}(\xi^{-1}(1)\otimes 1)\stackrel{\eqref{eq:30}}{=}1^1S(\phi^11^2)\phi^2\otimes \phi^3=s(\phi^1)\phi^2\otimes \phi^3, \label{eq:50} \\
(A\otimes \varepsilon)\circ\theta=(A\otimes \varepsilon)\circ(\xi\otimes A)\circ \hat{\eta}=(\xi\otimes \K)\circ(\overline{A\otimes A}\otimes \varepsilon)\circ\hat{\eta}\stackrel{\eqref{eq:etacap}}{=}\xi\circ\pi. \label{eq:51}
\end{gather}
where $\pi:A\otimes A\rightarrow \overline{A\otimes A}$ is just the canonical projection. These make the final check easier, since
\begin{equation*}
\begin{split}
s(\phi^1)\alpha\phi^2\beta s(\phi^3) & \stackrel{\eqref{eq:44}}{=}\xi(\cl{s(\phi^1)\phi^2\otimes \phi^3})\stackrel{\eqref{eq:50}}{=}\xi(\cl{\theta^{-1}(1\otimes 1)}) \\
 & \stackrel{\eqref{eq:51}}{=}\left((A\otimes \varepsilon)\circ\theta\right)(\theta^{-1}(1\otimes 1))=1
\end{split}
\end{equation*}
and so even \eqref{eq:QHA4} holds.
\end{proof}

Corollary \ref{cor3.3.22} retrieves what it seems to be a limited family of quasi-bialgebras with preantipode for which it is possible to recover an explicit relation with the quasi-Hopf algebra structure (as the one we will study in Example \ref{ex:3.3.23}). Let us show briefly that it is actually a large class of quasi-Hopf algebras.

Let $(A,m,u,\Delta,\varepsilon,\Phi,s,\alpha,\beta)$ be a finite-dimensional quasi-Hopf algebra. Then we know, by Theorem \ref{th3.2.22}, that $A$ admits a preantipode $S(\cdot):=\beta s(\cdot)\alpha$ and so the Structure Theorem holds for the quasi-Hopf $A$-bimodules. Applying Schauenburg's result \ref{thSch} we get, a posteriori, a quasi-antipode $(s',\alpha',\beta')$ for $A$ such that the morphism $\widetilde{\gamma}(\cl{x\otimes y})=x\beta' s'(y)$ is invertible. By \cite[Proposition 1.1, p. 1425]{Dri}, a quasi-antipode is uniquely determined up to an invertible element. Hence, there exists an $u\in A$ invertible such that $(s,\alpha,\beta)$ and $(s',\alpha',\beta')$ are connected via $u$. In particular, if $\alpha$ is invertible, then also $\alpha'$ is invertible. By the way, note that $s'$, $\alpha'$ and $\beta'$ are not known to us, since they are obtained by $\widetilde{\gamma}$.

Next, assume that $\alpha$ is invertible in $A$. Hence
\begin{equation}\label{eq:52}
\lfun{\xi}{\overline{A\otimes A}}{A}{\cl{x\otimes y}}{xS(y)=x\beta's'(y)\alpha'=\widetilde{\gamma}(\cl{x\otimes y})\alpha'}
\end{equation}
is invertible with `explicit' inverse given by
\begin{equation}\label{eq:53}
\xi^{-1}(h):=\widetilde{\gamma}^{-1}\left(h(\alpha')^{-1}\right)=\widetilde{\gamma}^{-1}\left(h\alpha^{-1}u^{-1}\right).
\end{equation}
Thus we can apply Corollary \ref{cor3.3.22}. This implies that, if $\alpha$ is invertible, it is always possible to recover explicitly the quasi-antipode from the preantipode, at least theoretically. It is just a question of finding an explicit inverse to the map $\xi$, that we know to be invertible.

\begin{remark}
By \cite[Proposition 1.1, p. 1425]{Dri}, if $(s,\alpha,\beta)$ and $(s',\alpha',\beta')$ are two quasi-antipodes for a quasi-bialgebra $(A,m,u,\Delta,\varepsilon,\Phi)$, then there exists a unique invertible element $v\in A$ such that:
$$s'(a)=vs(a)v^{-1},\qquad \alpha'=v\alpha, \qquad \beta'=\beta v^{-1}.$$
If $(A,m,u,\Delta,\varepsilon,\Phi,s,\alpha,\beta)$ is a quasi-Hopf algebra with $\alpha$ invertible, then Corollary \ref{cor3.3.22} states that $\left(\xi((1\otimes *)\cdot \xi^{-1}(1)),1,\beta s(1)\alpha\right)$ is a quasi-antipode for $(A,m,u,\Delta,\varepsilon,\Phi)$.
Consequently we should have that, under these hypothesis, $v=\alpha^{-1}$. In fact:
$$v\alpha=\alpha'=1=\alpha^{-1}\alpha, \qquad \beta v^{-1}=\beta'=\beta s(1)\alpha=\beta \alpha$$
and finally, for all $h\in A$
\begin{displaymath}
\begin{split}
s'(h) & \stackrel{\phantom{(56)}}{=}\xi\left((1\otimes h)\cdot\xi^{-1}\left(1\right)\right)\stackrel{\eqref{eq:53}}{=}\xi\left((1\otimes h)\cdot\widetilde{\gamma}^{-1}\left(\alpha^{-1}v^{-1}\right)\right) \\
 & \stackrel{\eqref{eq:52}}{=} \widetilde{\gamma}\left((1\otimes h)\cdot\widetilde{\gamma}^{-1}\left(\alpha^{-1}v^{-1}\right)\right)v\alpha \stackrel{\eqref{eq:action1}}{=} ((1\otimes h)\triangleright \alpha^{-1}v^{-1}) v\alpha \\
 & \stackrel{\eqref{eq:action2}}{=} \alpha^{-1}v^{-1}s'(h)v\alpha = \alpha^{-1}s(h)\alpha.
\end{split}
\end{displaymath}
\end{remark}

Observe that in this framework fall all finite-dimensional Hopf algebras, the quasi-Hopf algebras $H(2)$, $H_\pm(8)$ and $H(32)$ of \cite{EG}, the twisted quantum doubles $D^{\omega}(G)$ introduced by Dijkgraaf, Pasquier and Roche (cf. \cite{DPR}, \cite[Section XV.5]{Ka}), the basic quasi-Hopf algebras $A(q)$ of \cite{G}.

In order to find interesting examples of the relation that intervenes between quasi-antipodes and preantipodes, one should look for a quasi-Hopf algebra that is finite-dimensional and such that $\alpha$ is not invertible. Unfortunately, it will not be enough to transform a quasi-Hopf algebra with trivial $\alpha$ (let us call $\alpha$ \emph{trivial} when it is invertible) via a gauge transformation $F$, as the following remark shows.

\begin{remark}
Let $(A,m,u,\Delta,\varepsilon,\Phi,s,\alpha,\beta)$ be a (finite-dimensional) quasi-Hopf algebra with $\alpha$ invertible. We have the preantipode $S(\cdot)=\beta s(\cdot)\alpha$ and the quasi-antipode $(\widehat{s},\widehat{\alpha},\widehat{\beta})$ obtained from $S$ by Corollary \ref{cor3.3.22}. Let $F\in A\otimes A$ be a gauge transformation on $A$ and consider the quasi-antipode $(s,\alpha_F,\beta_F)$ as defined in \cite[Remark 5, p. 1425]{Dri}. In general, $\alpha_F=s(f^1)\alpha f^2$ needs not to be invertible. Nevertheless, consider the preantipode $S_F(\cdot)=\beta_Fs(\cdot)\alpha_F$
and denote by $E=E^1\otimes E^2$ and $G=G^1\otimes G^2$ other two copies of $F$. We have that, for all $h\in A$:
\begin{gather*}
\widehat{s}(h)=1^1S(h1^2)=1^1\left(S_F\right)_{F^{-1}}(h1^2)\stackrel{\eqref{eq:gauge}}{=}1^1f^1S_F(F^1h1^2f^2)F^2, \\
\widehat{\alpha}_F=\widehat{s}(g^1)\widehat{\alpha}g^2=1^1f^1S_F(F^1g^11^2f^2)F^2g^2=1^1f^1S_F(1^2f^2), \\
\widehat{\beta}_F=G^1\,\widehat{\beta}\,\widehat{s}(G^2)=G^1S(1)\widehat{s}(G^2)=G^1e^1S_F(E^1e^2)E^21^1f^1S_F(F^1G^21^2f^2)F^2,
\end{gather*}
and this quasi-antipode $(\widehat{s},\widehat{\alpha}_F,\widehat{\beta}_F)$ on $A_F$ is written `explicitly' using just $F$, $S_F$ and $\xi^{-1}(1)$. Furthermore, it can be proved that it is connected to $(s,\alpha_F,\beta_F)$ via the invertible element $\alpha$.
\end{remark}

Let us conclude with an explicit example.

\begin{example}\label{ex:3.3.23}(\cite[Preliminaries 2.3]{EG})
Let $C_2=\left\langle g\right\rangle$ be the cyclic group of order 2 with generator $g$ and let $\K$ be a field of characteristic different from 2. Consider the group bialgebra $H(2):=\K C_2$.
Observe that $H(2)$ is  a two dimensional commutative algebra. Now, let us set
$p:=\frac{1}{2}(1-g)$
and let us introduce the non trivial reassociator
$$\Phi:=(1\otimes 1\otimes 1)-2(p\otimes p\otimes p).$$
In this way $H(2)$ becomes a quasi-Hopf algebra with quasi-antipode given by $\beta=1$, $\alpha=g$ and $s=\id_{H(2)}$.
Let us consider the associated preantipode:
$$\lfun{S}{H(2)}{H(2)}{x}{xg}.$$
As above consider the map $\xi$ of \eqref{eq:xiS}:
$$\lfun{\xi}{\overline{H(2)\otimes H(2)}}{H(2)}{\cl{x\otimes y}}{xyg}.$$
It is easy to see that it is bijective and we can exhibit an explicit inverse for $\xi$. Indeed, consider the function $$\lfun{\psi}{H(2)}{\overline{H(2)\otimes H(2)}}{x}{\cl{x\otimes g}}.$$
By composing with $\xi$ we find:
\begin{displaymath}
\xi\left(\psi(x)\right)=\xi\left(\cl{x\otimes g}\right)=xgg=x
\end{displaymath}
and since we know that $\xi$ is invertible, we have that $\psi=\xi^{-1}$.
Therefore, we can construct a quasi-antipode for $H(2)$ by virtue of Corollary \ref{cor3.3.22}. Since $$\cl{1^1\otimes 1^2}=\xi^{-1}(1)=\psi(1)=\cl{1\otimes g},$$
what we find is $\alpha=1$, $\beta=S(1)=g$ and $s(x)=S(xg)=xgg=x$ for all $x\in H(2)$. Finally, by recalling that a quasi-antipode in uniquely determined only up to an invertible element and that $g$ is trivially invertible, it is easy to see that $g$ itself is the invertible element that we need and thus we recovered the structure given previously.
\end{example}

\begin{remark}
Observe that, once we have proven that $H(2)$ is a quasi-bialgebra with preantipode $S$, we may come to the same conclusions of Example \ref{ex:3.3.23} by simply applying Corollary \ref{cor3.3.11}.
\end{remark} 

\smallskip

\textbf{Acknowledgements.}
I would like to thank Alessandro Ardizzoni for our discussions during the development of this work, for many helpful remarks and, most of all, for his careful reading of this paper. I would like to thank also Laiachi El Kaoutit, who helped me during the development of the master thesis, for his support and his encouragement. Moreover, my gratitude goes to the referee for useful suggestions.

\end{document}